\newtheorem{theorem}{Theorem}[section]
\newtheorem{prop}[theorem]{Proposition}
\newtheorem{assump}[theorem]{Assumption}
\newtheorem{lemma}[theorem]{Lemma}
\newtheorem{definition}[theorem]{Definition}
\newtheorem{remark}[theorem]{Remark}
\theoremstyle{definition}
\newtheorem{example}[theorem]{Example}
\newcommand{\norm}[1]{\left\lVert#1\right\rVert}
\newcommand{\abs}[1]{\left\lvert#1\right\rvert}
\DeclareMathOperator{\im}{Im}
\DeclareMathOperator{\re}{Re}
\DeclareMathOperator{\supp}{supp}
\DeclareMathOperator{\auid}{AUID}
\DeclareMathOperator{\Tr}{Tr}
\DeclareMathOperator{\GUE}{GUE}
\DeclareMathOperator{\GOE}{GOE}
\DeclareMathOperator{\TW}{TW}
\DeclareMathOperator{\Dim}{\Delta \im}
\newcommand{\di}{\mathop{}\!{d}}
\newcommand{\dm}{\frac{\di}{\di t}}
\newcommand{\dF}{\frac{\di}{\di t}}
\DeclareMathOperator{\ii}{i}
\newcommand{\code}[1]{\texttt{#1}}
\newcommand{\bd}{\boldsymbol{\delta}}
\newcommand{\bc}{\boldsymbol{c}}
\newcommand{\e}{\mathrm{e}}
\newcommand{\GOEast}{\widehat{\text{GOE}}}
\DeclareMathOperator{\EX}{\mathbb{E}}%
\DeclareMathOperator{\edge}{S_{\text{edge}}}%
\DeclareMathOperator{\indicator}{\mathbbm{1}} %
\DeclareMathOperator{\Prob}{\mathbb{P}}%
\newcommand\Tau{\mathcal{T}}
\newcommand{\nc}{\normalcolor}
\newcommand{\C}{{\mathbb C}}
\newcommand{\R}{{\mathbb R}}
\newcommand{\ie}{\emph{i.e., }}
\newcommand{\eg}{\emph{e.g., }}
\newcommand{\cf}{\emph{c.f., }}
\newcommand{\dd}{\mathrm{d}}
\newcommand{\ppartial}{\widehat{\partial}}
\numberwithin{equation}{section}
\begin{document}

\begin{center}
	
	\begin{minipage}{0.85\textwidth}
		\vspace{2.5cm}
		
		\begin{center}
			\large\bf
			Quantitative Tracy--Widom laws for sparse random matrices
		\end{center}
	\end{minipage}
\end{center}

\vspace{0.5cm}

\begin{center} \begin{minipage}{1\textwidth}
		\begin{minipage}{0.33\textwidth}
			\begin{center}
				Teodor Bucht\\
				\footnotesize 
				{KTH Royal Institute of Technology}\\
				{\it tbucht@kth.se}
			\end{center}
		\end{minipage}
		\begin{minipage}{0.33\textwidth}
			\begin{center}
				Kevin Schnelli\\
				\footnotesize 
				{KTH Royal Institute of Technology}\\
				{\it schnelli@kth.se}
			\end{center}
		\end{minipage}
		\begin{minipage}{0.33\textwidth}
			\begin{center}
				Yuanyuan Xu\\
				\footnotesize 
				{AMSS, CAS}\\
				{\it yyxu2023@amss.ac.cn}
			\end{center}
		\end{minipage}
	\end{minipage}
\end{center}

\vspace{0.5cm}

\begin{center}
	\begin{minipage}{0.83\textwidth}\footnotesize{
			{\bf Abstract.}
			We consider the fluctuations of the largest eigenvalue of sparse random matrices, the class of random matrices that includes the normalized adjacency matrices of the Erd\H{o}s--Rényi graph $\mathcal{G}(N, p)$. We show that the fluctuations of the largest eigenvalue converge to the Tracy--Widom law at a rate almost $O(N^{-1/3 } + p^{-2} N^{-4/3})$ in the regime $p \gg N^{-2/3 }$. Our proof builds upon the Green function comparison method initiated by Erd\H{o}s, Yau, and Yin \cite{green_function_comparison_reference}. To show a Green function comparison theorem for fine spectral scales, we implement algorithms for symbolic computations involving averaged products of Green function entries.
		}
	\end{minipage}
\end{center}

\vspace{5mm}

{\small
	\footnotesize{\noindent\textit{Date}: \today}\\
	\footnotesize{\noindent\textit{Keywords}: Tracy--Widom laws; convergence rate; sparse random matrices.}\\

	\vspace{2mm}
	
}

\pagenumbering{arabic}

\section{Introduction}\label{sec:introduction}

A prominent example of a sparse random matrix is the adjacency matrix of the Erd\H{o}s--Rényi graph $\mathcal{G}(N, p)$ with $p\equiv p(N)$, where each (undirected) edge of the complete graph with $N$ vertices appears independently with probability $p$. It has applications in combinatorics, graph theory, mathematical physics, and network theory. Properties of a graph can be inferred from the spectral behavior of its adjacency matrix, see references \cite{random_graphs_bollobas, random_graphs_sla}.

We denote the adjacency matrix of the random graph $\mathcal{G}(N, p)$ by $A_N$. Each column of $A_N$ typically has $pN$ non-zero entries.  One can use the parameter $q:=\sqrt{pN}$ to measure the sparsity. We say the matrix $A_N$ is sparse when $q\ll \sqrt{N}$ and the matrix is full when $q\asymp\sqrt{N}$. Note that $A_N=(a_{ij})_{i,j=1}^N$ is a real symmetric matrix and the entries $(a_{ij})_{i\le j}$ are i.i.d.\ Bernoulli random variables with
\begin{equation*}
	\Prob \left(a_{ij} = 1\right) = q^2/N, \quad \qquad \Prob \left(a_{ij} = 0\right) = 1 - q^2/N. 
\end{equation*} 
One may center and normalize the matrix $A_N$ by setting
\begin{equation*}
	H_N := \frac{A_N - (q^2 / N) E_N}{q \sqrt{1 - q^2/N}}, \quad \qquad E_N:=(1)_{i,j=1}^{N},
\end{equation*}
so that the entries of the rescaled matrix $H_N=(h_{ij})_{i,j=1}^{N}$ satisfy
\begin{equation}\label{moment_intro}
	\EX[h_{ij}] = 0, \quad \qquad  \EX[h_{ij}^2] = \frac{1}{N}, \qquad \quad \EX \left[\abs{h_{ij}}^k\right]  
	\asymp \frac{1}{N q^{k-2}}, \qquad k\geq 3.
\end{equation}
Having the Erd\H{o}s--Rényi model in mind, we consider real symmetric sparse matrices $H_N=(h_{ij})_{i,j=1}^N$, whose entries are i.i.d.\ random variables (up to symmetry) satisfying the moment conditions in (\ref{moment_intro}).
On the global scale the empirical eigenvalue distribution of $H_N$ follows Wigner's semicircle law if $q^2\gg 1$.

This general model was studied in \cite{erdos_renyi_graph_I, erdos_renyi_graph_II} and contains the (rescaled) adjacency matrices of Erd\H{o}s--Rényi graphs among other sparse matrix models, \eg diluted Wigner matrices \cite{Khorunzhy2001Sparse}. In the full case $q\asymp\sqrt{N}$, $H_N$ is a standard real symmetric Wigner matrix and the fluctuations about the spectral edges are given by the celebrated Tracy--Widom laws~\cite{tracy_widom1, tracy_widom2}, like for the invariant Gaussian ensembles. That is, we have the edge universality for the largest eigenvalue~$\lambda_N$ of $H_N$,
\begin{equation}\label{edge_wigner}
	\lim_{N\to \infty} \Prob \left(N^{2/3} (\lambda_N - 2) \leq r\right) = \TW_1(r),
\end{equation}
where $\TW_1$ is the cumulative distribution function of the Tracy--Widom law in the real symmetric case.
Edge universality was first established in~\cite{shoshnikov1999edge} for Wigner matrices with symmetrically distributed entries using the moment methods, and the assumption of symmetric distributions was partially removed in~\cite{peche2007Wigner,peche2008Spectral}. It was further established in~\cite{tao2010local} under a third moment matching condition. Edge universality without matching conditions was proved in~\cite{green_function_comparison_reference,lee2014edge}. Besides Wigner matrices, the Tracy--Widom laws are universal for the extremal eigenvalues of a wide class of random matrix models, \eg Wigner type matrices with variance profiles and correlations \cite{bourgade2014edge, lee2015edge, knowles2017anisotropic, alt2020band}, random band matrices~\cite{SodinBand2010,liu2023edge}, $\beta$-ensembles with general potentials \cite{bourgade2014edge, deift2007edge}, and general $\beta$-Dyson Brownian motions \cite{landon2017edge, adhikari2020dyson, amol2024edge}.

For sparse random matrices satisfying (\ref{moment_intro}) with $q\geq N^{1/3+\bd}$, for some $\bd>0$,  edge universality as in (\ref{edge_wigner}) was proved in \cite{erdos_renyi_graph_II} and for sparser matrices with $q\geq N^{1/6+\bd}$ in~\cite{LeeJiOon2018LlaT} with a deterministic edge correction term. In~\cite{huang2020transition} it was established, for $q\geq N^{1/9+\bd}$, that the fluctuations of the shifted largest eigenvalue satisfy
\begin{equation}\label{edge_sparse}
	\lim_{N\to \infty} \Prob \left(N^{2/3} (\lambda_N -L) \leq r\right) = \TW_1(r), \qquad \quad L=2+\frac{6 \kappa_4}{q^2}+\chi+O(q^{-4}),
\end{equation} 
where $\kappa_4$ is determined by the fourth order moments of $h_{ij}$ from (\ref{moment_intro}), and $\chi$ is a random correction term given by
\begin{align}\label{def_chi}
	\chi=\frac{1}{N} \sum_{i,j} \big(h_{ij}^2 - \frac{1}{N}\big) .
\end{align}
It describes fluctuations of the total number of edges and is asymptotically Gaussian of size $\frac{1}{\sqrt{N}q}$. Hence at $q\asymp N^{1/6}$ the fluctuations transition from Tracy--Widom to Gaussian ones~\cite{huang2020transition} down to  $q \ge N^{\bd}$~\cite{HeKnowles2021}. Yet by including refined subleading random corrections terms~\cite{lee2021higher}, Tracy--Widom fluctuations and the edge universality as in~\eqref{edge_sparse} are recovered~\cite{huang2022edge} for $q \geq N^{\bd}$.

For even sparser matrices, \eg  the critical Erd\H{o}s--Rényi graph with $Np \asymp \log N$, the extremal eigenvalue statistics were analyzed in \cite{largest_eigenvale_sparse_inhom,spectralRadiiSparse,  poissonStatisticsSparse,vu2008spectral, latal2018structure,altDucatezKnowlesLocalizedPhase}. In~\cite{extremalEigenvalueCritical,outliersSpectrumSparse} a sharp transition was established: there is $b^*\simeq 2.59$ such that when $q^2 > b^* \log N$,
the spectral norm converges to 2, while for $q^2 < b^* \log N$ the extreme eigenvalues are determined by the
largest degree. Another important model of sparse random graphs, the random $d$-regular graph ensemble, has a  strikingly different extremal eigenvalue behavior. Remarkably, edge universality for any fixed degree $d\geq 3$ was recently proved in \cite{huang2024ramanujan}, see earlier works in \cite{edgeRigidityRegularGraph, spectrumRegularGraphs, spectralGapEdgeUniversalityRandomRegularGraph, huang2023edge, spectralEdgeConstantDegree, huang2024optimal}.

Returning to Wigner matrices, it is natural to ask how fast the fluctuations of the largest eigenvalue in (\ref{edge_wigner}) converge to their Tracy--Widom limits. The exact rate of convergence $O(N^{-2/3})$ was established in \cite{goe_convergence} for the invariant Gaussian Unitary Ensemble ($\GUE$) and the Gaussian Orthogonal Ensemble ($\GOE$) with a proper rescaling for the largest eigenvalue. The first quantitative convergence rate $O(N^{-2/9+\omega})$ for Wigner matrices with a variance profile was obtained in~\cite{bourgade_quant} using optimal local relaxation estimates for the Dyson Brownian motion and a quantitative Green function comparison theorem  for short times.  Later the convergence rate was improved to $O(N^{-1/3+\omega})$ by \cite{Schnelli_Xu_2022, generalized_wigner} using a refined long-time Green function comparison theorem \cite{green_function_comparison_reference}.

In this paper, we focus on random sparse matrices $H_N$ satisfying (\ref{moment_intro}) with $ q \geq N^{1/6+\bd}$, for some $\bd>0$, and establish the first quantitative Tracy--Widom law using a Kolmogorov type distance (see Theorem \ref{main_theorem} for a formal statement): For any $r_0 \in \mathbb{R}$ and small $\omega > 0$,
\begin{equation}\label{introduction_main_result}
	\sup_{r > r_0} \abs{\Prob \left(N^{2/3}(\lambda_N - 2 - \frac{6 \kappa_4}{q^2} - \chi) \leq r\right) - \TW_1(r)} \leq N^{\omega} \Big(\frac{1}{N^{1/3}} + \frac{N^{2/3}}{q^4}\Big),
\end{equation}
for sufficiently large $N$, with $\kappa_4$ from (\ref{edge_sparse}) and $\chi$ given by (\ref{def_chi}). In the regime $q\asymp\sqrt{N}$, this recovers the convergence rate for Wigner matrices obtained in \cite{Schnelli_Xu_2022}. In the sparse case  with $q\ll \sqrt{N}$, the entries of $H_N$ from (\ref{moment_intro}) are of size $O(1/q)$, which is much bigger than in the Wigner case. This is the main technical difficulty to overcome for sparse random matrices with small $q$.

A quantitative convergence result is not only of theoretical interest. In general, bounds on the convergence rate toward limiting distributions are important tools when approximating finite $N$ distributions \cite[Chapter~11]{DasGupta}. A common application of a bound on the convergence rate is to bound the error of the p-value used in hypothesis testing. An example involving hypothesis testing with the largest eigenvalue of sparse random matrices is community detection in stochastic block models \cite{hypothesisTestingComminutyDetection, Lei2016Block}.

	Our proof of \eqref{introduction_main_result} uses the same strategy as in \cite{Schnelli_Xu_2022} for Wigner matrices, based on the Green function comparison method initiated by Erd\H{o}s, Yau and Yin \cite{green_function_comparison_reference}. The main technical result (as stated in Theorem \ref{GFCT_random_shift}) is a Green function comparison theorem (GFT) comparing the expectation of a suitably chosen functional of a sparse matrix $H_N$ with the corresponding expectation for a Wigner matrix $W_N$ with Gaussian entries. Such a functional of $H_N$ is properly chosen to approximate the cumulative distribution function of the rescaled largest eigenvalue through the {\it Green function} or {\it resolvent} of $H_N$:
	\begin{align}\label{def_G}
		G(z):=(H_N-zI_N)^{-1} \in \C^{N \times N}, \qquad z=E+\ii \eta\in \C^+,
	\end{align}
	see Lemma \ref{second_eigenval_estimate_lemma} for more details. This lemma and many estimates crucially rely on the local law for the Green function~\cite{dynamical_approach_rmt,green_function_comparison_reference}, and its form for the sparse setup~\cite{erdos_renyi_graph_I,LeeJiOon2018LlaT, huang2020transition, huang2022edge,lee2021higher}, see Theorems~\ref{entrywise_local_law} and~\ref{optimal_local_law_sparse} below for formal statements.

	To prove the GFT in Theorem \ref{GFCT_random_shift}, we consider a continuous interpolation flow between the sparse matrix $H_N$ and a Gaussian matrix $W_N$ as used in \cite{LeeJiOon2018LlaT, huang2020transition, huang2022edge} and  bound the time derivative of the corresponding expectation along the flow. The time derivative of the expectation has a $q^{-1}$-expansion,
	which is obtained using the cumulant expansion formula (stated in Lemma \ref{cumulant_expansion_lemma}).
	Since the variances of the matrix entries are constant, the second order cumulant terms vanish algebraically, hence the expansion starts with the third order terms consisting of expectations of products of Green function entries with the pre-factors $1/(Nq^{k-2})~(k\geq 3)$ from the moment condition (\ref{moment_intro}).

	To establish the edge universality for sparse matrices in \cite{huang2020transition, huang2022edge}, a GFT for short times $t\gg N^{-1/3} $ is used together with local relaxation results for the Dyson Brownian motion from \cite{landon2017edge,adhikari2020dyson}. The $N$-dependent spectral parameter $\eta$ of the Green function in (\ref{def_G}) is chosen slightly below the typical eigenvalue spacing $N^{-2/3}$.
	With such choice of $\eta$ and small $t$, the terms with off-diagonal Green function entries can be bounded effectively using the local law for the Green function with an error of size $1/(N\eta)\approx N^{-1/3}$ and the so-called Ward identity.  It is then sufficient to show a cancellation principle for the terms consisting of diagonal Green function entries only. In \cite{huang2022edge}, where edge universality was proven down to $q \geq N^{\bd}$, such a cancellation principle is established for the terms consisting of diagonal entries till arbitrarily high order in the $q^{-1}$-expansion.

	To attain the convergence rate in \eqref{introduction_main_result},  our choice of the spectral parameter $\eta$ is much smaller than $N^{-2/3}$ and our GFT is for long times $t\asymp \log N$. Using the local law with small $\eta \gg N^{-1}$, off-diagonal Green function entries are bounded by $1/(N\eta)=o(1)$, which is not sufficiently small. Hence we have to show a cancellation principle for all the terms, including the ones containing several off-diagonal entries.  In order to reduce the computational complexity, we restrict ourselves to $q \geq N^{1/6 + \bd}$ and $\eta \gg N^{-1}+q^{-4}$. Thanks to this assumption, the terms of order six and higher in the cumulant expansions are sufficiently small by the Ward identity and local law estimates.
	We also expect that an improved version of (\ref{introduction_main_result}) without the last error term $O(N^{2/3}q^{-4})$ holds for any $q \geq N^{\bd}$ (see Remark \ref{remark_intro} below), by looking at sufficiently high order terms in the $q^{-1}$-expansion and using a refined  spectral edge from \cite{huang2022edge,lee2021higher}.

	For the third and fifth order terms in the cumulant expansions, the contributions from them are shown to be negligible using the idea of unmatched indices introduced in \cite{green_function_comparison_reference}. The key observation is that these odd order cumulant terms contain at least one unmatched index (see \eg Definition \ref{def_unmatch}) and can be iteratively expanded using similar techniques as developed in \cite{Schnelli_Xu_2022}. By iterative expansions via unmatched indices, one obtains sufficiently many off-diagonal Green function entries which can bounded effectively using the local law estimates by naive power counting.

	A main novelty of this paper is a cancellation principle among all the fourth order terms in the cumulant expansions
	and the terms originating from the corrections to the spectral edge in (\ref{tilde_L_def}). In other words, the choice of the correction terms  in (\ref{tilde_L_def}) is designed to cancel with the fourth order terms in the cumulant expansion, as introduced in \cite{huang2022edge,lee2021higher, huang2020transition}. This cancellation principle is proved by generating a large system of non-trivial identities among all the involved fourth order terms. Each non-trivial identity is generated by using the resolvent identity and the cumulant expansion formula (see more details in Sections \ref{perfect_cancellation_section} and \ref{perfect_cancellation_section_general_case}). We end up using 4288 identities to see the cancellation principle needed for our proof of the GFT stated in Theorem \ref{GFCT_random_shift}.
	
	The manipulations involving non-trivial identities are carried out using computer aided symbolic computations. We present algorithms for manipulating expectations of averaged products of Green function entries, and verifying the cancellations among these terms. We describe the implementation details of our algorithms in Section \ref{implementation_details_section}. All of the code used, along with the non-trivial identities is available on GitHub~\cite{githubrepo}.

\subsection{Assumptions and main results}

 We first set up a general sparse random matrix model that covers the Erd\H{o}s--Rényi graph model introduced above.
\begin{assump}
	Let $H = H_N$ be a symmetric $N \times N$ random matrix with the following assumptions: \begin{enumerate}
		\item For each $N$, the entries $(h_{ij})_{i \leq j}$ are i.i.d. real valued random variables.
		\item The sparsity parameter $q = q(N)$ satisfies $N^{1/6 + \bd} \leq q \leq N^{1/2}$, for some $\bd > 0$. 
		\item  The entries satisfy the following assumptions on the moments:  \begin{equation}
			\EX[h_{ij}] = 0, \quad \EX[\abs{h_{ij}}^2] = \frac{1}{N}, \quad \EX[|h_{ij}|^k] \leq \frac{C^k}{N q^{k-2}}, \; \quad k\geq 3,
			\label{moment_assumptions}
		\end{equation} 
	for some constant $C$ independent of $N$. Additionally, denoting the $k$th cumulant of a random variable by $c^{(k)}(\cdot)$, we assume that
	\begin{equation}
		c^{(1)}(h_{ij})= 0, \quad c^{(2)}(h_{ij})= \frac{1}{N}, \quad c^{(k)}(h_{ij}) = \frac{(k-1)! \kappa_k}{N q^{k-2}}, \; \quad k\geq 3.
		\label{cumulant_assumptions}
	\end{equation} 
Note that the normalized cumulant $\kappa_k=\kappa_k(N)$ may depend on $N$ and is bounded uniformly in $N$.

\item
 There exists $c > 0$ such that $\kappa_4 \geq c$.
	\end{enumerate}
	\label{H_assumptions}
\end{assump}

\begin{remark}
 The last condition that $\kappa_4 \geq c$ was also assumed in previous works (\textit{e.g.}, \cite{huang2020transition}) to guarantee that the normalization for the matrix $H$ by $q$ in (\ref{cumulant_assumptions}) is correct. 
\end{remark}

Our main result is the following theorem. 
\begin{theorem}
		Let $H_N=(h_{ij})_{1 \leq i \leq j \leq N}$ satisfy Assumption \ref{H_assumptions} and denote the ordered eigenvalues of $H_N$ as $\lambda_N \geq \lambda_{N-1} \geq \ldots \geq \lambda_1$. Then for any fixed $r_0 \in \mathbb{R}$ and small $\omega > 0$, we have
		\begin{equation}\label{eq_main}
			\sup_{r > r_0} \abs{\Prob \left(N^{2/3}(\lambda_N - 2 - \frac{6 \kappa_4}{q^2} - \chi) \leq r\right) - \TW_1(r)} \leq N^{\omega} \Big(\frac{1}{N^{1/3}} + \frac{N^{2/3}}{q^4}\Big),
		\end{equation} 
	for sufficiently large $N \geq N_0(r_0, \omega)$, where $\kappa_4$ is from \eqref{moment_assumptions} and $\chi = \frac{1}{N} \sum_{i,j} \left(h_{ij}^2 - \frac{1}{N}\right)$ is a random shift term defined in (\ref{def_chi}).
		\label{main_theorem}
\end{theorem}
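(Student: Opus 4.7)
The plan is to reduce Theorem \ref{main_theorem} to a Green function comparison theorem (GFT) between the sparse matrix $H_N$ and a reference Gaussian Wigner matrix $W_N$, in the spirit of \cite{Schnelli_Xu_2022}. First I would approximate the indicator $\indicator(\lambda_N \leq L + r N^{-2/3})$, with $L = 2 + 6\kappa_4/q^2 + \chi$, by a smooth functional of the normalized resolvent trace $\frac{1}{N}\Tr G(z)$ at a spectral parameter $\eta \gg N^{-1} + q^{-4}$, invoking Lemma \ref{second_eigenval_estimate_lemma} together with the local laws (Theorems \ref{entrywise_local_law} and \ref{optimal_local_law_sparse}) to localize $\lambda_N$ and to control off-diagonal Green function entries. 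Since the $O(N^{-2/3})$ rate of convergence for the Gaussian reference is already known \cite{goe_convergence}, the remaining task is to bound the change of $\EX\,\Phi(\frac{1}{N}\Tr G_t(z))$, for a suitable smooth test function $\Phi$, along the Ornstein--Uhlenbeck type interpolation flow carrying $H_N$ into $W_N$ over a long time $t \asymp \log N$.

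Next I would differentiate in $t$ and apply the cumulant expansion formula of Lemma \ref{cumulant_expansion_lemma} to each matrix entry. Because the first two moments of $H_N$ and $W_N$ coincide, the expansion starts at third order, and the $k$-th order term carries the prefactor $(Nq^{k-2})^{-1}$ from \eqref{cumulant_assumptions}. Orders $k \geq 6$ are negligible under $q \geq N^{1/6+\bd}$ and $\eta \gg N^{-1} + q^{-4}$ by naive power counting, using the Ward identity $\sum_j \abs{G_{ij}}^2 = \im G_{ii}/\eta$ and the entrywise local law bound on off-diagonal entries. The third and fifth order contributions do not vanish termwise but always carry at least one unmatched summation index in the sense of Definition \ref{def_unmatch}; iterating the cumulant expansion along such an unmatched index, as in \cite{green_function_comparison_reference, Schnelli_Xu_2022}, produces enough off-diagonal Green function entries that the resulting monomials are absorbed into the target error $N^\omega(N^{-1/3} + N^{2/3} q^{-4})$.

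The genuine obstacle, and the step at which the sparse case really departs from the Wigner case, is the fourth order cumulant term, which a priori contributes a piece of order $N^{2/3}/q^2$ and hence cannot simply be discarded. To deal with it I would establish a cancellation principle: the fourth order contribution along the flow is, to leading order, exactly offset by the $t$-derivative of the functional once it is re-centered by the deterministic shift $6\kappa_4/q^2$ and the random shift $\chi$ entering $L$. The very form of these corrections, introduced in \cite{huang2020transition, huang2022edge, lee2021higher}, is dictated by this cancellation. Making it rigorous amounts to verifying a very large linear system of identities among averaged products of Green function entries; such identities can be generated systematically by combining the resolvent identity $H G = I + zG$ with further rounds of cumulant expansion. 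Since the resulting bookkeeping is impractical by hand, I would delegate it to the symbolic computation described in Section \ref{implementation_details_section}, following the authors' report that $4288$ such identities are required.

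Once these order-by-order estimates are in place, integrating the derivative in $t$ over an interval of length $\asymp \log N$ yields the GFT of Theorem \ref{GFCT_random_shift} with total error $N^\omega(N^{-1/3} + N^{2/3} q^{-4})$, and combining this with the sharp Tracy--Widom rate for the Gaussian reference and the smooth-functional approximation of the first step furnishes the Kolmogorov-type bound \eqref{eq_main} uniformly for $r > r_0$. I expect the fourth order cancellation to be by far the hardest part: the third, fifth, and higher order contributions yield to refinements of estimates that are by now standard in the sparse local law literature, whereas the fourth order identities are a genuinely new combinatorial phenomenon and are what forces the use of computer algebra.
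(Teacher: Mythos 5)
Your proposal follows essentially the same route as the paper: reduce via Lemma \ref{second_eigenval_estimate_lemma} to a smooth functional of $\im m_N$ at scale $\eta \gg N^{-1}+q^{-4}$, run a long-time interpolation flow, expand the $t$-derivative by cumulants, discard orders $k\geq 6$ by power counting and Ward identity under $q\geq N^{1/6+\bd}$, kill the odd orders by iterating on unmatched indices, and see the fourth order cancel against the $\kappa_4/q^2$ and $\chi$ shifts through a large computer-generated linear system of resolvent identities. One small inaccuracy: the paper compares against the modified Gaussian ensemble $\GOEast$ (all entries i.i.d.\ with variance $1/N$) rather than the standard GOE, and for the final assembly it invokes the rate $O(N^{-1/3+\omega})$ from \cite{Schnelli_Xu_2022} for that reference matrix rather than the sharp $O(N^{-2/3})$ rate of \cite{goe_convergence} (which applies to the standard GOE after an exact rescaling). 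Since the GFT error and the mollification error are both $O(N^{-1/3+\omega})$ anyway, this distinction does not change the final bound, but the citation should be adjusted.
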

\begin{remark} \label{remark_intro}
	 The second condition that $q \geq N^{1/6+\bd}$ of Assumption \ref{H_assumptions} is technical. We expect that a quantitative convergence result such as Theorem \ref{main_theorem} also hold for $q \leq C N^{1/6}$ with a refined shifted edge, possibly as indicated in \cite{huang2020transition,huang2022edge,lee2021higher}. We refer to Remark \ref{lower_q_bound_remark} for more explanations on where the assumption $q \geq N^{1/6+\bd}$ is used.
\end{remark}

The proof of Theorem \ref{main_theorem} relies on a Green function comparison theorem (GFT) stated in Theorem \ref{GFCT_random_shift} below. Roughly speaking, Theorem \ref{GFCT_random_shift} compares the distribution of the rescaled largest eigenvalue of the sparse matrix $H_N$ and that of a Wigner matrix with Gaussian entries, denoted by $\GOEast$. To make the notation in the proof slightly simpler we will not compare with the usual $\GOE$ matrix, that has independent centered Gaussian entries, with variance $1/N$ off the diagonal and variance $2/N$ on the diagonal. Instead we introduce  a slightly different matrix $\GOEast$  with i.i.d. centered Gaussian entries that all have variance $1/N$. Note that $\GOEast$ is also a Wigner matrix, that satisfies conditions 1--3 in Assumption \ref{H_assumptions} with $q = N^{1/2}$. It is known from \cite{Schnelli_Xu_2022} that the convergence rate to the Tracy--Widom law for the largest eigenvalue of $\GOEast$ is bounded by $O(N^{-1/3+\omega})$.

 Before we state Theorem \ref{GFCT_random_shift}, we recall the definition of the Green function of $H_N$ in (\ref{def_G}) and its normalized trace  $m_N$, \ie
 \begin{equation}\label{def:G}
	G(z) := (H_N - zI_N)^{-1}, \qquad m_N(z) := \frac{1}{N} \Tr(G(z)), \qquad z \in \mathbb{C}^+.
\end{equation} Note that $m_N(z)$ also is the {\it Stieltjes transform} of the empirical eigenvalue distribution $\mu_N:=\frac{1}{N} \sum_{i=1}^N \delta_{\lambda_i}$; see more details in Section \ref{sec:local_law}.  
\begin{theorem}[Green function comparison theorem (GFT)]
	Recall $\bd > 0$ and $\kappa_4$ from Assumption \ref{H_assumptions}. Fix  a large $C_0>0$ and a small $\epsilon > 0$ such that $\epsilon < \min\{\bd, 1/12\}$.   Define the random edge $\widehat{L}$ by 
	\begin{align}\label{tilde_L_def} 
		\widehat{L}  := 2 +  \frac{6\kappa_4 }{ q^2} + \chi, \qquad \chi = \frac{1}{N} \sum_{i,j} \left(h_{ij}^2 - \frac{1}{N}\right).
	\end{align} 
 Then for any $\abs{\gamma_1}, \abs{\gamma_2} \leq C_0 N^{-2/3+\epsilon}$, $\eta \in \left[N^{-1+\epsilon} + N^{2\epsilon} q^{-4}, N^{-2/3-\epsilon/2}\right]$, and any smooth function $F \in C_0^{\infty}(\mathbb{R})$,  we have 
\begin{equation}
		\begin{multlined}
		\abs{\EX \left[F \left(N \int_{\gamma_1}^{\gamma_2} \im m_N(\widehat{L} + x + \ii \eta) \di x\right)\right] - \EX^{\GOEast} \left[F \left(N \int_{\gamma_1}^{\gamma_2} \im m_N\left(2 + x + \ii \eta\right) \di x\right)\right]} \\
		 \leq N^{-1/3 + 3 \epsilon} + \frac{N^{2/3 + 3 \epsilon}}{q^4},
		\end{multlined}
		\label{GFCT_random_shift_eq}
	\end{equation} 
for sufficiently large $N \geq N_0(\epsilon, C_0)$.
	\label{GFCT_random_shift}
\end{theorem}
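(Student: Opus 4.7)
The plan is to interpolate between $H_N$ and a Gaussian reference matrix $\GOEast$ via a matrix flow $H_t$, $t\in[0,T]$ with $T\asymp \log N$, that preserves the first two cumulants of each entry and drives higher cumulants to zero exponentially fast. Writing
\[
f(t):=\EX\Bigl[F\Bigl(N\int_{\gamma_1}^{\gamma_2}\im m_N(\widehat{L}(t)+x+\ii\eta)\,\di x\Bigr)\Bigr],
\]
I would apply the cumulant expansion formula (Lemma \ref{cumulant_expansion_lemma}) to $\dF f(t)$, together with the chain rule on $\widehat{L}(t)$. The result is a finite $q^{-1}$-expansion whose $k$th term carries a prefactor $1/(N q^{k-2})$ multiplying the expectation of an averaged product of Green function entries. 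Because $H_N$ and $\GOEast$ share their first two moments, the $k=2$ terms cancel algebraically and the expansion effectively starts at $k=3$. The aim is to show that $|f'(t)|$ integrates to the bound in \eqref{GFCT_random_shift_eq}.

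I would then treat each order of the expansion separately. For the odd orders $k=3$ and $k=5$, every arising monomial in the Green function entries has an index appearing an odd number of times; applying the unmatched-index technique introduced in \cite{green_function_comparison_reference} and refined in \cite{Schnelli_Xu_2022}, iterated cumulant expansions produce additional off-diagonal entries $G_{ij}$, which by the entrywise local law (Theorem \ref{entrywise_local_law}) and the Ward identity are small enough for naive power counting to yield a contribution of size $N^{-1/3+O(\epsilon)}$. For orders $k\ge 6$, the cumulant prefactor $1/(N q^{k-2})\le 1/(N q^4)$ is already small, and combining the local law (Theorem \ref{optimal_local_law_sparse}) with the Ward identity and the assumption $\eta\ge q^{-4}$ gives a bound of size $N^{2/3+O(\epsilon)}/q^4$. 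The condition $q\ge N^{1/6+\bd}$ enters precisely here, to ensure this contribution is affordable.

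The main obstacle is the fourth-order contribution ($k=4$). Here the monomials are matched, the unmatched-index trick fails, and a naive bound would give $N^{2/3}/q^2$, which is too large. The mechanism that saves us, following \cite{huang2020transition,huang2022edge,lee2021higher}, is that the random shift $\widehat{L}$ in \eqref{tilde_L_def} -- both the deterministic $6\kappa_4/q^2$ piece and the random $\chi$ piece -- generates, via the chain rule on $F$ and on $\im m_N(\widehat{L}(t)+\cdot)$, additional terms that are designed precisely to annihilate the fourth-order cumulant contribution. Verifying this cancellation is the heart of the proof. I would proceed by: (i) enumerating every fourth-order averaged product that appears, together with the shift-induced terms; (ii) generating a large system of non-trivial identities among these expressions by repeatedly applying the resolvent identity $zG=HG-I$ and the cumulant expansion to quantities of the form $\EX[h_{ij}\cdot(\text{Green function product})]$; (iii) solving this linear system to express the fourth-order cumulant contribution as the shift contribution with opposite sign. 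Because of the combinatorial explosion (the paper reports on the order of $4288$ identities), step (iii) must be carried out by computer algebra on normalized symbolic representations of products of Green function entries, as described in Section \ref{implementation_details_section}. Integrating the resulting pointwise bound on $|f'(t)|$ from $t=0$ to $t\asymp\log N$ then yields \eqref{GFCT_random_shift_eq}.
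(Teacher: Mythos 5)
Your proposal follows essentially the same strategy as the paper: the matrix interpolation flow preserving the first two cumulants, the cumulant expansion of the time derivative, the algebraic cancellation at second order, unmatched-index iteration for the odd-order cumulants, the shift-induced cancellation against the fourth-order cumulant terms verified by generating identities from the resolvent identity and solving a large linear system with computer algebra, and the $q\geq N^{1/6+\bd}$ and $\eta\geq q^{-4}$ assumptions to tame orders $\geq 6$. This is exactly the structure of Proposition~\ref{derivative_observable_small}, Lemma~\ref{F_time_derivative_bound_lemma}, and Section~\ref{implementation_details_section}.

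The one non-trivial ingredient your outline leaves implicit is that before one can carry out the "naive power counting'' via the Ward identity, one needs the a priori bound $\EX[\im m_N(t,z(t))]\lesssim N^{-1/3+\epsilon}$ \emph{uniformly for} $z(t)\in\edge(t)$ with a \emph{random} spectral edge $\widehat L_t$ and $\eta$ well below the eigenvalue spacing. In the paper this is Proposition~\ref{imm_bound_random_edge_lemma}, proved via a separate Gr\"onwall bootstrap: one first derives the self-improving differential inequality $\bigl|\frac{\di}{\di t}\EX[\im m]\bigr|\leq N^{-\epsilon/8}\EX[\im m]+N^{-1/3+\epsilon/8}$ (essentially the $F(x)=x$ case of your scheme, but without the $x$-average), integrates it against the initial-time estimate $\EX[\im m(0,\cdot)]\lesssim N^{-1/3+\epsilon}$ from the Gaussian case, and only then feeds the resulting bound into Lemma~\ref{tau_F_lemma} to control the non-leading $\Tau_F$-terms in the general-$F$ estimate. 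Without this intermediate step the Ward-identity bounds $\frac{1}{N}\sum_{j,k}|G_{jk}|^2 = \im m/\eta$ are uninformative, since $\eta$ may be as small as $N^{-1+\epsilon}$ and the entrywise local law only gives $|G_{ij}-\delta_{ij}m_{sc}|\prec 1/q$. You should make this two-stage structure explicit, since the $\EX[\im m]$-bound at a random edge is not an off-the-shelf consequence of the optimal averaged local law.
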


\begin{remark}[Complex case]
	We expect that analogues of Theorem \ref{main_theorem} and Theorem \ref{GFCT_random_shift} also hold for complex Hermitian sparse matrices satisfying the moment assumptions in \eqref{moment_assumptions} and the additional requirement $\EX[h_{ij}^2] = 0$. Most of the proof could be adapted to the complex setting with slight modifications (see similar arguments in \cite{Schnelli_Xu_2022} for complex Wigner matrices), but it is outside the scope of this paper. 
\end{remark}

\subsection{Organization of the paper}
The paper is organized as follows. In Section \ref{preliminaries_section} we collect preliminary results needed in the proof, such as local laws for the Green function, rigidity estimates on the eigenvalues, and cumulant expansion formulas. In Section \ref{proof_of_main_results_section} we first prove the main result Theorem~\ref{main_theorem} using the long-time GFT in Theorem \ref{GFCT_random_shift} as the main technical result. The proof of Theorem \ref{GFCT_random_shift} is based on a dynamical approach and follows directly from Proposition \ref{derivative_observable_small}. The proof of Proposition \ref{derivative_observable_small} is postponed to Section \ref{general_F_cancellation_section}, and Sections \ref{imm_estimate_section}--\ref{imm_estimate_section_2} are devoted to the proof of Proposition \ref{imm_bound_random_edge_lemma}, which is a key ingredient to show Proposition \ref{derivative_observable_small}. In Section \ref{sec:code} we describe in detail how we implemented the symbolic algorithms used in the proofs. Finally, in Appendix \ref{proof_of_lemmas_appendix} we present the proofs of several lemmas and in Appendix \ref{details_about_imm_system} we provide some examples from our algorithms.
\\\\
\noindent\textbf{Notations and conventions:} For any matrix $A \in \mathbb{C}^{N \times N}$ we will use $\norm{A}_2$ to denote the matrix norm induced by the Euclidean vector norm. Also, we define $\norm{A}_{\text{max}}$ as $\norm{A}_{\max} := \max_{ i, j} \abs{A_{ij}}$. We use $\underline{A} := \frac{1}{N} \Tr A$ to denote the normalized trace of $A$. 

We will use $c$ and $C$ to denote positive constants that are independent of $N$, and their values may change from line to line. We use the standard Big-$O$ notation for large $N$, \ie $f(N) = O(g(N))$ means that there exists $C > 0$ such that $\abs{f(N)} \leq C \abs{g(N)}$ for sufficiently large $N$. We write $f(N)\asymp g(N)$ to say that there exists $c, C > 0$ such that $cg(N) \leq f(N) \leq Cg(N)$ for sufficiently large $N$. Further we use $f(N) \ll g(N)$ to denote that $\lim_{N \to \infty} f(N) / \abs{g(N)} = 0$. Finally, we denote the upper complex half-plane by $\C^+ := \{z \in \C : \im z > 0\}$, and the non-negative real numbers by $\R^+ := \{x \in \R : x > 0\}$.

We will say that an event $\mathfrak{E}$ in a probability space holds with \textit{high probability} if for any (large) $\Gamma > 0$, $\Prob(\mathfrak{E}) \geq 1 - N^{-\Gamma}$ for all $N \geq N_0(\Gamma)$. We will also frequently use the following notion of stochastic domination. Properties of stochastic domination can be found in \cite[Proposition 6.5]{dynamical_approach_rmt}.
\begin{definition}
	Let $X = X(N)$ and $Y = Y(N)$ be two sequences of non-negative random variables. We say that $Y$ stochastically dominates $X$, if for all $\epsilon > 0$ and $\Gamma > 0$ we have \begin{equation*}
		\Prob(X(N) > N^\epsilon Y(N)) \leq N^{-\Gamma},
	\end{equation*} for all $N \geq N_0(\epsilon, \Gamma)$. We denote this as $X \prec Y$ or $X = O_{\prec}(Y)$. 
\end{definition} 
Throughout the paper, the letter $N$ will always refer to the size of the matrix we are considering, and the $N_0(\epsilon, \Gamma)$ from the definition of stochastic domination can be chosen to only depend on $\epsilon$, $\Gamma$, and the constants $C$ and $\bd>0$ from Assumption \ref{H_assumptions}. With a slight abuse of notation, we use $x \prec y$ for deterministic sequences $x = x(N)$ and $y = y(N)$ to indicate that for any small $\epsilon > 0$, one has $x \leq N^{\epsilon}y$ for sufficiently large $N \geq N_0(\epsilon)$.

\section{Preliminaries}
\label{preliminaries_section}

\subsection{Local laws and eigenvalue rigidity}\label{sec:local_law}

 For a probability measure $\nu$ on $\R$ denote by $m_\nu$ its Stieltjes transform, \ie
\begin{align}
	m_\nu(z):= \int_\R\frac{\dd \nu(x)}{x-z}\,,\qquad z\in\C^+\,.
\end{align}
Note that $m_{\nu}\,:\C^+\rightarrow\C^+$ is analytic and can be analytically continued to the real line outside the support of $\nu$. Moreover, $m_{\nu}$ satisfies $\lim_{\eta\nearrow\infty}\ii\eta {m_{\mu}}(\ii \eta)=-1$. The Stieltjes transform of the semicircle distribution $\rho_{sc}(x):=\frac{1}{2 \pi} \sqrt{(4-x^2)_+}$ is denoted by $m_{sc}(z)$. It is well know that $ m_{sc}(z)$ is the unique solution to
\begin{equation}\label{self_eq}
	1+z m_{sc}(z)+ m_{sc}^2(z)=0\,,
\end{equation}
satisfying $\im m_{sc}(z)>0$, for $\im z>0$.

We denote the eigenvalues of $H_N$ by $\lambda_1 \leq \lambda_{2} \leq \ldots \leq \lambda_N$. The Stieltjes transform of the empirical eigenvalue measure of $H_N$, $\mu_N:=\frac{1}{N} \sum_{j=1}^N \delta_{\lambda_j}$, is then given by the normalized trace of its Green function (or resolvent) defined in (\ref{def:G}), \ie
$$ m_N(z)= \int_\R\frac{\dd \mu_N(x)}{x-z}=\frac{1}{N}\Tr(G(z)), \qquad G(z) = (H_N - zI_N)^{-1}, \qquad z \in \mathbb{C}^+.$$
We often refer to $z$ as spectral parameter and write $z=E+\ii\eta$, $E\in\R$, $\eta>0$.  \nc Define the domain of the spectral parameter $z$ as \begin{equation}
	S_0 := \left\{z = E + i \eta : \abs{E} \leq 5, 0 < \eta \leq 3\right\}.
	\label{S0_def}
\end{equation} We have the following entrywise local law for the Green function of sparse matrices $H_N$.

\begin{theorem}[Entrywise local law, Theorem 2.8 of \cite{erdos_renyi_graph_I}]  
	 Assume that $H_N$ satisfies conditions 1--3 in Assumption \ref{H_assumptions}, and recall $G(z)$ from \eqref{def:G}. Then uniformly for $z \in S_0$, 
	 \begin{equation}
		\max_{1 \leq i,j \leq N} \abs{G_{ij}(z) - \delta_{ij}m_{sc}(z)} \prec \Psi(z),
		\label{local_law}
	\end{equation} 
with the control parameter given by
\begin{equation}
	\Psi(z):=\frac{1}{q} + \sqrt{\frac{\im m_{sc}(z)}{N \eta}} + \frac{1}{N\eta}, \qquad z \in \mathbb{C}^+.
\end{equation}

\label{entrywise_local_law}
\end{theorem}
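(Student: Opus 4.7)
The plan is to follow the standard self-consistent equation approach for Green functions, adapted to the sparse setting. First I would fix $z = E + \ii\eta \in S_0$ and use the Schur complement (or the identity for inverting block matrices) to write, for each $i$,
\begin{equation*}
\frac{1}{G_{ii}(z)} = h_{ii} - z - \sum_{k,l \neq i} h_{ik} G^{(i)}_{kl}(z) h_{li},
\end{equation*}
where $G^{(i)}$ denotes the Green function of the minor of $H_N$ with the $i$th row and column removed. Splitting off the diagonal contribution from the quadratic form and using the well-known bound $|m_N - \frac{1}{N}\sum_k G^{(i)}_{kk}| \leq 1/(N\eta)$, this can be rearranged as
\begin{equation*}
\frac{1}{G_{ii}(z)} = -z - m_N(z) + \Upsilon_i(z),
\end{equation*}
where $\Upsilon_i$ collects $h_{ii}$, the off-diagonal quadratic form $\sum_{k \neq l} h_{ik} G^{(i)}_{kl} h_{li}$, the centered diagonal sum $\sum_k (h_{ik}^2 - N^{-1}) G^{(i)}_{kk}$, and the trace correction. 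The key input is a large-deviation estimate for these fluctuation terms, which in the sparse regime must take account of the fact that the $h_{ij}$ have typical size $1/q$ rather than $1/\sqrt{N}$.

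Next I would bound $\Upsilon_i$ by a moment method: compute $\EX|\Upsilon_i|^{2p}$ using a combinatorial graphical expansion, track the contributions of all moments $\EX|h_{ij}|^k \leq C^k/(Nq^{k-2})$, and apply Markov's inequality to get $\Upsilon_i \prec \Psi(z)$, provided one already has a weak a priori bound $\max_{k,l}|G^{(i)}_{kl}| \leq C$ and $\im m_N \leq C \im m_{sc} + C\Psi$. The $1/q$ piece of $\Psi$ arises from the higher-order cumulants in the diagonal sum (essentially from $\EX|h_{ij}|^4 \asymp 1/(Nq^2)$), and the $\sqrt{\im m_{sc}/(N\eta)}$ piece is the standard Gaussian contribution via the Ward identity $\sum_l |G^{(i)}_{kl}|^2 = \im G^{(i)}_{kk}/\eta$. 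Using the stability of the self-consistent equation $m_{sc}^2 + zm_{sc} + 1 = 0$ — in particular that on $S_0$ a perturbation $\Upsilon$ propagates to $|m_N - m_{sc}| \lesssim |\Upsilon|/(|m_N + m_{sc}| + |\Upsilon|^{1/2})$ — this converts the bound on $\Upsilon_i$ into a bound on $m_N - m_{sc}$, and plugging back into the identity yields $|G_{ii} - m_{sc}| \prec \Psi$. Off-diagonal entries are handled via the identity
\begin{equation*}
G_{ij} = -G_{ii} \sum_{k \neq i} h_{ik} G^{(i)}_{kj} = -G_{ii} G^{(ij)}_{jj}\bigl(h_{ij} - \sum_{k,l \neq i,j} h_{ik} G^{(ij)}_{kl} h_{lj}\bigr),
\end{equation*}
where the same large-deviation estimate controls the quadratic form, and $|h_{ij}| \prec 1/q$.

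The bounds on $\Upsilon_i$ require the very a priori bounds we are trying to prove, so I would close the argument by a continuity/bootstrap in the imaginary part $\eta$, starting from large $\eta \sim 3$ where trivial norm bounds give $|G_{ij} - \delta_{ij} m_{sc}| \lesssim 1$, and descending in dyadic steps to small $\eta$. At each step the improved bound on the previous scale serves as the a priori input for the large-deviation estimate at the next scale, and the deterministic Lipschitz continuity of $G$ in $\eta$ (with gap $1/N^{10}$ say) converts the high-probability bounds into uniform ones on a fine grid and then to all of $S_0$. The main obstacle is controlling the high-moment estimates of the sparse quadratic forms with the correct $\Psi$ dependence; in particular, the combinatorial bookkeeping needed to show that contributions from higher cumulants only produce the $1/q$ error (and not a larger error) is the delicate step, and the bootstrap must be set up carefully near the spectral edge where $m_{sc}$ has a square-root singularity and the stability of the self-consistent equation degrades to the weaker form mentioned above.
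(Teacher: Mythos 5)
The paper does not give a proof of this theorem: it is quoted verbatim as Theorem~2.8 of \cite{erdos_renyi_graph_I} and used as a black box. Your sketch correctly reproduces the strategy of that reference (Schur complement giving the self-consistent equation, large-deviation/moment estimates for the sparse quadratic forms producing the $1/q$ error on top of the Wigner-type $\sqrt{\im m_{sc}/(N\eta)}+1/(N\eta)$ piece, stability of the quadratic self-consistent equation, and a bootstrap descending in $\eta$ from $\eta\asymp 1$), so there is nothing to compare against the paper itself beyond noting that the citation is the entire content of the proof here.
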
 

\noindent We also have the following lemma concerning $\im m_{sc}(z)$, see \eg \cite[Lemma 6.2]{dynamical_approach_rmt} for a reference. 

\begin{lemma}
	The Stieltjes transform of the semicircular law satisfies \begin{equation}
		\im m_{sc}(z) \sim \begin{cases}
			\sqrt{\kappa + \eta}, & \text{if } E \in [-2, 2], \\
			\frac{\eta}{\sqrt{\kappa + \eta}}, & \text{otherwise}, \end{cases}
	\label{immsc_asypmtotics}
	\end{equation} uniformly in $z \in S_0$, where $\kappa := \min \{\abs{E - 2}, \abs{E + 2}\}$.
\end{lemma}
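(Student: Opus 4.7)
The plan is to prove the asymptotics directly from the explicit formula
$$m_{sc}(z) = \tfrac{1}{2}\bigl(-z + \sqrt{z^2-4}\bigr),$$
with the branch of the square root chosen so that $\im m_{sc}(z) > 0$ on $\mathbb{C}^+$. Writing $z = E + \mathrm{i}\eta$ and $\sqrt{z^2-4} = a + \mathrm{i}b$ with $a, b \in \mathbb{R}$, the system $a^2 - b^2 = E^2 - \eta^2 - 4$, $2ab = 2E\eta$, $a^2 + b^2 = |z^2 - 4|$ gives
$$b^2 = \tfrac{1}{2}\bigl(\sqrt{u^2 + v^2} - u\bigr), \qquad \im m_{sc}(z) = \tfrac{1}{2}(b - \eta),$$
where $u := E^2 - \eta^2 - 4$ and $v := 2E\eta$. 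The task therefore reduces to estimating $b - \eta$ sharply above and below in each of the two regimes, using that $|E| \leq 5$ and $\eta \leq 3$ on $S_0$ so numerical constants stay controlled.

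For the bulk $E \in [-2, 2]$, I would write $4 - E^2 = (2-|E|)(2+|E|) \asymp \kappa$, so $-u = (4-E^2) + \eta^2 \asymp \kappa + \eta^2 \asymp \kappa + \eta$ on $S_0$. Combined with $|v| \lesssim \eta$ this yields $|z^2 - 4| \asymp \kappa + \eta$, and hence $b \asymp \sqrt{\kappa + \eta}$. For the outside regime $|E| > 2$, one has $u = \kappa(|E|+2) - \eta^2$ and $|v| \asymp \eta$. When $u \geq 0$ I would use the algebraic identity $\sqrt{u^2+v^2} - u = v^2 / (\sqrt{u^2+v^2}+u)$ together with $\sqrt{u^2+v^2}+u \asymp \kappa + \eta$ to conclude $b \asymp \eta/\sqrt{\kappa+\eta}$; when $u < 0$ one has $|u| \asymp \eta^2 \gtrsim \kappa$ and $|z^2-4| \asymp \eta$, giving $b \asymp \sqrt{\eta} \asymp \eta/\sqrt{\kappa+\eta}$ in that subregime.

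The last step is passing from the bounds on $b$ to bounds on $\im m_{sc}(z) = (b-\eta)/2$. The upper bound is immediate, but verifying that the $-\eta$ does not cancel the leading order near the spectral edges (where $\kappa$ and $\eta$ compete) is the one delicate point I expect to be the main obstacle. The cleanest way to bypass this bookkeeping is to run a parallel argument via the integral representation
$$\im m_{sc}(z) = \int_{-2}^{2} \frac{\eta\,\rho_{sc}(x)}{(x-E)^2 + \eta^2}\,\mathrm{d}x,$$
which provides matching lower bounds in both regimes: in the bulk one restricts the integration to an interval of length $\asymp \sqrt{\kappa+\eta}$ around $E$ on which $\rho_{sc} \asymp \sqrt{\kappa+\eta}$, while outside the bulk one integrates over an interval of length $\asymp \sqrt{\kappa+\eta}$ adjacent to the nearer endpoint $\pm 2$, where again $\rho_{sc} \asymp \sqrt{\kappa+\eta}$. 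Since $S_0$ is compact and all expressions are continuous, the uniform control of constants across its corners is then routine.
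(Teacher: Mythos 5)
The paper itself does not prove this lemma; it simply cites \cite[Lemma 6.2]{dynamical_approach_rmt}, so there is no internal proof to compare your argument against. Your overall plan --- the explicit formula for $m_{sc}$ to get upper bounds on $\im m_{sc}=\tfrac{1}{2}(b-\eta)$, supplemented by the Poisson integral for matching lower bounds --- is exactly the standard route, and your diagnosis that the subtraction of $\eta$ in $b-\eta$ is the delicate point (and that the integral representation bypasses it) is accurate.

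That said, two concrete steps in your sketch are wrong as stated and would need repair. First, in the bulk you assert $-u = (4-E^2)+\eta^2 \asymp \kappa+\eta^2 \asymp \kappa+\eta$; the last equivalence fails near the edge, e.g.\ for $\kappa=0$ and $\eta\to 0$ one has $\kappa+\eta^2 = \eta^2 \ll \eta = \kappa+\eta$. The intended conclusion $|z^2-4|\asymp \kappa+\eta$ is still true, but one must invoke $|v|=2|E|\eta$ as a matching \emph{lower} bound near the edge (where $|E|\asymp 2$ gives $|v|\asymp\eta$), not merely the upper bound $|v|\lesssim\eta$; the chain through $-u$ alone does not give it. Second, in the integral-representation step the interval lengths are off by a square root. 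Outside the spectrum the correct choice is an interval of \emph{length} $\asymp\kappa+\eta$ at \emph{distance} $\asymp\kappa+\eta$ from the nearer endpoint, on which $\rho_{sc}\asymp\sqrt{\kappa+\eta}$ and the Poisson kernel is $\asymp\eta/(\kappa+\eta)^2$, yielding $\eta/\sqrt{\kappa+\eta}$; an interval of length $\asymp\sqrt{\kappa+\eta}$ ``adjacent to'' the endpoint has $\rho_{sc}$ ranging from $0$ to $(\kappa+\eta)^{1/4}$, so the uniform bound $\rho_{sc}\asymp\sqrt{\kappa+\eta}$ fails on it. Likewise in the bulk one should restrict to a window of length $\asymp\eta$ (centered at $E$ when $\kappa\ge\eta$, or shifted inward by $\asymp\eta$ when $\kappa<\eta$), not of length $\asymp\sqrt{\kappa+\eta}$. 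Both issues are fixable and do not affect the viability of the strategy, but as written they are genuine errors rather than omissions.
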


We will collect several results on optimal rigidity estimates of eigenvalues of sparse random matrices inside the bulk of spectrum \cite{huang2022edge} and near the edge regime \cite{lee2021higher}, inspired by earlier works \cite{HeKnowles2021,huang2020transition,LeeJiOon2018LlaT}. In fact these results are for any sparse matrices satisfying Assumption \ref{H_assumptions} even with small sparsity parameter $q\geq N^{\bd}$. They construct a random probability measure $\widetilde{\rho}$ whose Stieltjes transform $\widetilde{m}$ solves the equation (which can be viewed as (\ref{self_eq}) with perturbations) 
$$P(z, m) := 1 + zm + m^2 + \sum_{d=2}^D a_{2d} m^{2d} = 0,$$
 where the coefficients $a_{2d}$ are explicit random polynomials in the variables $h_{ij}$.  In particular it was shown in \cite{lee2021higher} that $\abs{a_{2d}} \prec 1/q^{d-1}$.  
  We have the following proposition regarding the behavior of $\widetilde{\rho}$ and $\widetilde{m}$. 
  \begin{prop}[Proposition 2.4 from \cite{huang2022edge}]
	There exists an algebraic function $\widetilde{m} : \mathbb{C}^+ \to \mathbb{C}^+$, that depends on the coefficients $a_2, a_4, \ldots, a_{2D}$ such that the following holds:
	\begin{enumerate}
		\item $\widetilde{m} = \widetilde{m}(z)$ is the solution of the polynomial equation, $P(z, \widetilde{m}(z)) = 0$.
		\item $\widetilde{m}$ is the Stieltjes transform of a symmetric probability measure $\widetilde{\rho}$, with $\supp \widetilde{\rho} = [-\widetilde{L}, \widetilde{L}]$, where $\widetilde{L}$ depends smoothly on the coefficients $a_2, a_4, \ldots, a_{2D}$, and its derivatives with respect to $a_2, a_4, \ldots, a_{2D}$ are uniformly bounded. Moreover, $\widetilde{\rho}$ is strictly positive on $(-\widetilde{L}, \widetilde{L})$ and has square root behavior at the edges of its support. 
		\item We have the following estimate on the imaginary part of $\widetilde{m}$,
		\begin{equation}
			\im [\widetilde{m}(E + \ii \eta )] \sim \begin{cases}
				\sqrt{\kappa + \eta}, & \mbox{if} \;  E \in [-\widetilde{L}, \widetilde{L}], \\
				\frac{\eta}{\sqrt{\kappa + \eta}}, & \mbox{if} \; E \notin [-\widetilde{L}, \widetilde{L}],
			\end{cases} 
			\label{imm_hat_estimate}
		\end{equation} and with high probability,
	\begin{equation*}
		\abs{\partial_2 P(z, \widetilde{m}(z))} \sim \sqrt{\kappa + \eta}, \quad \partial_2^2  P(z, \widetilde{m}(z)) = 1 + O(1/q),
		\end{equation*} 
	where $\kappa = \min \{|E - \widetilde{L}|, |E + \widetilde{L}|\}$ and $\partial_2 P(z,w)=\partial P(z,w)/\partial w$. 
	\end{enumerate}
\end{prop}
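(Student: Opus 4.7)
The plan is to view the defining equation $P(z,m) = 0$ as an $O(1/q)$-perturbation of the semicircle equation $1 + zm + m^2 = 0$, since $|a_{2d}| \prec 1/q^{d-1}$ and the leading perturbation is $a_4 \prec 1/q$. For $z \in \mathbb{C}^+$ bounded away from a small neighborhood of $\pm 2$, the derivative $\partial_2 P(z, m_{sc}(z)) = z + 2m_{sc}(z) = \sqrt{z^2 - 4}$ is bounded below, so the implicit function theorem produces an analytic branch $\widetilde{m}(z)$ close to $m_{sc}(z)$ with $|\widetilde{m}(z) - m_{sc}(z)| \lesssim 1/q$. Globally on $\mathbb{C}^+$ this branch is extended by analytic continuation; since the perturbed discriminant is close to the semicircle discriminant, the continuation avoids branch points except in a small neighborhood of $\pm \widetilde{L}$.

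To identify $\widetilde{m}$ as the Stieltjes transform of a symmetric probability measure $\widetilde{\rho}$, I would verify: (i) the symmetry relation $P(-\overline{z}, -\overline{m}) = \overline{P(z,m)}$ (true because $z$ appears only through $zm$ and only even powers of $m$ enter the perturbation), which yields $\widetilde{m}(-\overline{z}) = -\overline{\widetilde{m}(z)}$ and hence symmetry of $\widetilde{\rho}$; (ii) the asymptotic $\widetilde{m}(\ii \eta) = -(\ii \eta)^{-1} + O(\eta^{-3})$ obtained by inverting $P(z,m)=0$ as a formal series in $1/z$; (iii) preservation of $\im \widetilde{m} > 0$ along a continuation path from $+\ii \infty$, using $\partial_2 P \ne 0$. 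The endpoint $\widetilde{L}$ is then characterized as the largest real $E$ at which the coupled system $P(E,m)=0$, $\partial_2 P(E,m) = 0$ has a real solution $m = m_\ast$. At the unperturbed point $(E,m) = (2,-1)$ the Jacobian of this system with respect to $(E,m)$ is nondegenerate (driven by $\partial_2^2 P \approx 2$), so the implicit function theorem applied in the parameters $(a_{2d})$ yields smooth dependence of $\widetilde{L}$ and $m_\ast$ on the coefficients with uniformly bounded derivatives.

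For the square-root behavior and the bounds on $\im \widetilde{m}$, I would substitute $z = \widetilde{L} + \zeta$, $m = m_\ast + u$ into $P = 0$ and Taylor-expand. Since both $P$ and $\partial_2 P$ vanish at $(\widetilde{L}, m_\ast)$, the leading-order equation becomes
\[
\partial_1 P(\widetilde{L}, m_\ast)\, \zeta + \tfrac{1}{2}\, \partial_2^2 P(\widetilde{L}, m_\ast)\, u^2 + \text{(higher order in } u, \zeta\text{)} = 0,
\]
which gives the Puiseux branch $u \sim c\sqrt{\zeta}$ with an explicit constant $c$. This immediately yields the square-root density of $\widetilde{\rho}$ at the edges, and taking imaginary parts for $\zeta = E - \widetilde{L} + \ii \eta$ produces the estimate \eqref{imm_hat_estimate}. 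The bound $|\partial_2 P(z, \widetilde{m}(z))| \sim \sqrt{\kappa + \eta}$ follows from $\partial_2 P \sim \partial_2^2 P \cdot u$, while $\partial_2^2 P(z, \widetilde{m}(z)) = 1 + O(1/q)$ (up to the normalization convention) follows by directly evaluating $\partial_2^2 P = 2 + \sum_{d \geq 2} 2d(2d-1) a_{2d} m^{2d-2}$ at $m \approx -1$ and using $|a_{2d}| \prec 1/q^{d-1}$.

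The main obstacle is to execute the edge expansion with control that is uniform in the random coefficients $(a_{2d})$, not just in $z$. On the high-probability event $\{|a_{2d}| \leq N^\varepsilon / q^{d-1}\}$, one must verify that the subleading Taylor terms around $(\widetilde{L}, m_\ast)$ are genuinely negligible compared to the leading quadratic term $\tfrac{1}{2} \partial_2^2 P \cdot u^2$, and that the implicit function theorem applied to the edge system admits a stable inverse whose bounds depend only on the unperturbed semicircle data. This reduces the probabilistic statement to a deterministic stability analysis for small analytic perturbations of the semicircle equation near its square-root branch point, which is the technical core of Proposition 2.4 of \cite{huang2022edge}.
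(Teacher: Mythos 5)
This proposition is not proved in the present paper: it is imported verbatim as Proposition~2.4 of \cite{huang2022edge}, and the authors use it as a black-box input to establish the optimal local law (Theorem~\ref{optimal_local_law_sparse}) and rigidity (Theorem~\ref{thm_rigidity}). There is therefore no internal proof to compare your attempt against; the only way to verify your sketch in full is to consult the cited source.

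That said, your outline is conceptually aligned with the standard strategy for such perturbed self-consistent equations (implicit function theorem away from $\pm 2$, Puiseux expansion at the simultaneous zero of $P$ and $\partial_2 P$, characterization of $\widetilde{L}$ via the coupled system). A few remarks on the gaps you acknowledge and one you do not. First, your item (iii) --- ``preservation of $\im \widetilde{m} > 0$ along a continuation path'' plus the $-1/z$ asymptotic --- gives via the Herglotz representation that $\widetilde{m}$ is the Stieltjes transform of \emph{some} probability measure, but identifying $\supp \widetilde{\rho}$ as a single interval $[-\widetilde{L}, \widetilde{L}]$ requires additionally showing that $\widetilde{m}$ extends continuously and real-valued across $\mathbb{R}\setminus[-\widetilde{L},\widetilde{L}]$, i.e.\ that the discriminant of $P(E,\cdot)$ does not vanish outside $[-\widetilde{L},\widetilde{L}]$; this is nontrivial for a random degree-$2D$ perturbation and is precisely what forces one onto a high-probability event $\{|a_{2d}| \le N^\varepsilon/q^{d-1}\}$. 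Second, you correctly flag the normalization issue with $\partial_2^2 P$: for $P$ as written, $\partial_2^2 P(z, \widetilde m) = 2 + \sum_{d\ge 2} 2d(2d-1) a_{2d}\widetilde m^{2d-2} = 2 + O(1/q)$ near the edge (where $\widetilde m \approx -1$), so the constant in the stated bound $\partial_2^2 P = 1 + O(1/q)$ is a transcription or normalization quirk, not something your argument should be expected to reproduce. Third, the step ``the implicit function theorem applied to the edge system admits a stable inverse whose bounds depend only on the unperturbed semicircle data'' is where the real work lies and where uniformity in the random coefficients must be made quantitative; you are right that this is the technical core, but as written it remains a declaration rather than a proof.
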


 With $\widetilde{m}$ one has the following optimal local law for the averaged trace of the Green function, as stated in Eq (2.76) and (2.78) of \cite{huang2022edge}, and Theorem 2.7 of \cite{lee2021higher} near the spectral edge. Note that the error below is independent of the sparse parameter $q$ (\cf Theorem \ref{entrywise_local_law}).

\begin{theorem}[Averaged local law]
	Uniformly for any $z \in S_0 \cap \{E + \ii \eta :  N \eta \sqrt{\kappa + \eta} \geq N^{\bc}\}$ where $\bc > 0$ is any positive (small) real number, we have 
	\begin{equation}
		\abs{m_N(z) - \widetilde{m}(z)} \prec \frac{1}{N \eta}. 
		\label{optimal_local_law_eq}
	\end{equation}
	\label{optimal_local_law_sparse}
\end{theorem}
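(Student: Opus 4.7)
The plan is to derive a perturbed self-consistent equation of the form $P(z, m_N(z)) = O_\prec(1/(N\eta))$ and then invert it using the stability of $P$ around $\widetilde m(z)$. The starting point is the Schur complement identity
\begin{equation*}
-\frac{1}{G_{ii}(z)} = z - h_{ii} + \sum_{j,k \neq i} h_{ij}\, G^{(i)}_{jk}(z)\, h_{ki},
\end{equation*}
where $G^{(i)}$ is the resolvent of the minor obtained by removing the $i$-th row and column. After averaging over $i$ and using the entrywise bound of Theorem \ref{entrywise_local_law} to replace $G^{(i)}$ by $G$ up to negligible corrections, the task reduces to controlling the expectation of the quadratic form on the right to the desired precision.

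Next I would apply the cumulant expansion formula (Lemma \ref{cumulant_expansion_lemma}) in the variables $h_{ij}$, using the cumulant scaling $c^{(k)}(h_{ij}) \asymp 1/(Nq^{k-2})$ from \eqref{cumulant_assumptions}. The second-order cumulant reproduces the standard $\underline{G}$ term that generates the semicircle equation, while higher even cumulants produce the polynomial corrections $a_{2d}\, m_N^{2d}$, with random coefficients $a_{2d}$ built from averaged entry powers of the type $\tfrac{1}{N^2}\sum_{ij} h_{ij}^{2d+2}$. Standard concentration for sums of i.i.d.\ variables yields $|a_{2d}| \prec q^{-(d-1)}$, matching the coefficients of the polynomial $P$ whose root is $\widetilde m$. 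Truncating at a sufficiently large order $D$ so the tail is smaller than $1/(N\eta)$ yields an approximate equation $P(z, m_N(z)) = \mathcal{E}(z)$ with an explicit remainder $\mathcal{E}$.

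The crucial point is to bound $|\mathcal{E}|$ by $1/(N\eta)$ rather than by the naive entrywise control $\Psi(z) \sim 1/q + 1/\sqrt{N\eta}$. Here a fluctuation-averaging argument is used: the dangerous error terms are of the form $\tfrac{1}{N}\sum_i Z_i$ where $Z_i$ is a centered function of $G^{(i)}$, and iterating cumulant expansions on such sums extracts off-diagonal factors of $G$ that, via Theorem \ref{entrywise_local_law} and the Ward identity $\sum_j |G_{ij}|^2 = \im G_{ii}/\eta$, gain an extra factor of $1/\sqrt{N\eta}$ per iteration until the target $1/(N\eta)$ is reached. This is the same mechanism as for Wigner matrices, but complicated by the presence of many additional sparsity-driven cumulant contributions that must be tracked in parallel. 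Importantly, the $1/q$ part of $\Psi(z)$ is absorbed into the shift $\widetilde m - m_{sc}$ through the random coefficients $a_{2d}$ of $P$, rather than appearing in $m_N - \widetilde m$.

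Finally, to invert the approximate equation, I would Taylor expand $P(z,\cdot)$ around $\widetilde m$ and use the estimates $|\partial_2 P(z, \widetilde m)| \sim \sqrt{\kappa + \eta}$ and $\partial_2^2 P(z, \widetilde m) = 1 + O(1/q)$ from the preceding proposition. This yields a quadratic inequality of the form $|m_N - \widetilde m|\bigl(\sqrt{\kappa + \eta} - c\,|m_N - \widetilde m|\bigr) \prec 1/(N\eta)$, and the assumption $N\eta\sqrt{\kappa + \eta} \geq N^{\bc}$ together with a continuity/bootstrap argument in $\eta$, initialized at some large $\eta = O(1)$ where the bound is trivial, selects the stable branch and gives $|m_N - \widetilde m| \prec 1/(N\eta)$. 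The main obstacle I expect is the fluctuation averaging step: organizing the iterated cumulant expansion of $\tfrac{1}{N}\sum_i Z_i$ so that each iteration produces genuinely off-diagonal Green function factors rather than diagonal ones, and doing so while simultaneously bookkeeping the much larger family of higher-order cumulant contributions present in the sparse setting, is the delicate combinatorial core of the argument.
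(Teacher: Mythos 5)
This theorem is not proved in the paper; it is imported directly from the literature. The sentence preceding it explicitly attributes the result to Eq.\ (2.76)--(2.78) of \cite{huang2022edge} and Theorem 2.7 of \cite{lee2021higher}. There is therefore no "paper's own proof" to compare against — only a citation.

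That said, your sketch is a plausible high-level outline of the strategy used in those references: deriving a perturbed self-consistent polynomial equation $P(z, m_N(z)) = \mathcal{E}(z)$, obtaining the random coefficients $a_{2d} \prec q^{-(d-1)}$ from the higher cumulants, upgrading the naive entrywise error $\Psi \sim q^{-1} + (N\eta)^{-1/2}$ to $1/(N\eta)$ via fluctuation averaging, absorbing the $1/q$ contributions into the definition of $\widetilde m$ rather than into the error, and then inverting using $|\partial_2 P| \sim \sqrt{\kappa+\eta}$ and a continuity/bootstrap in $\eta$ starting from large $\eta$. Two remarks on the details: first, \cite{huang2022edge} and \cite{lee2021higher} work primarily from the identity $1 + z m_N + \underline{HG} = 0$ and expand $\underline{HG}$ directly via cumulant expansion, rather than going through the Schur complement and minor resolvents $G^{(i)}$; the Schur route would also work but is not what they do. Second, the bookkeeping of the sparse cumulant contributions to identify precisely which combinations of averaged Green function monomials feed the coefficients $a_{2d}$, and which are genuine error terms, is substantially more involved than the Wigner case and is the bulk of those papers' technical content — your sketch acknowledges this is "the delicate combinatorial core" but does not attempt it, which is fine for an outline but would need to be supplied in full to constitute a proof.
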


As consequences of the local law above, we state the following optimal rigidity estimates of eigenvalues in the bulk \cite[Theorem 1.6]{huang2022edge} and near the edge from \cite[Theorem 2.11]{lee2021higher}. 

\begin{theorem}[Rigidity of eigenvalues]\label{thm_rigidity}
	Let $H_N$ satisfy Assumption \ref{H_assumptions} and denote the eigenvalues of $H_N$ by $\lambda_N \geq \lambda_{N-1} \geq \ldots, \geq \lambda_1$. Denote the classical eigenvalue locations of $\widetilde{\rho}$ as $\widetilde{\mu}_N \geq \widetilde{\mu}_{N-1} \geq \ldots \geq \widetilde{\mu}_1$, \begin{equation*}
		\frac{k - 1/2}{N} = \int_{-\widetilde{L}}^{\widetilde{\mu}_k} \widetilde{\rho}(x) \di x, \qquad 1 \leq k \leq N.
	\end{equation*} Then we have the following optimal rigidity estimates: \begin{equation}
		\abs{\lambda_k - \widetilde{\mu}_k} \prec \frac{1}{N^{2/3} \min\{k, N - k  + 1\}^{1/3}}, 
		\label{optimal_rigidity}
	\end{equation} holds uniformly in $1 \leq k \leq N$. 
\end{theorem}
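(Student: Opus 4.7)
The plan is to deduce this rigidity statement from the averaged local law (Theorem~\ref{optimal_local_law_sparse}) using the standard Helffer--Sjöstrand route from Stieltjes transform control to counting function control, followed by a monotone inversion. This is precisely the argument carried out in~\cite{huang2022edge} for the bulk and in~\cite{lee2021higher} near the edge in this sparse setting; the plan below mirrors their strategy.

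First I would bound the difference between the empirical counting function $n(E) := N^{-1}\abs{\{i : \lambda_i \leq E\}}$ and its classical counterpart $\widetilde{n}(E) := \int_{-\widetilde{L}}^{E} \widetilde{\rho}(x)\,\di x$. For each energy $E$ near $[-\widetilde{L}, \widetilde{L}]$, choose a smooth cutoff $f_E$ of $\indicator_{(-\infty, E]}$ at a scale $\eta_0$ slightly larger than $1/N$, adapted to the edge profile near $\pm \widetilde{L}$, and write $n(E) - \widetilde{n}(E)$ via the Helffer--Sjöstrand representation
\[
f_E(\lambda) = \frac{1}{2\pi}\int_{\C^+} \bar\partial \tilde{f}_E(z)\,\frac{1}{\lambda - z}\,\di x\,\di y
\]
as an integral in $m_N(z) - \widetilde{m}(z)$. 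The bulk of the integration domain is controlled directly by \eqref{optimal_local_law_eq}, while small imaginary parts are handled by integration by parts in the $y$-variable. Crucially, the admissible domain $N \eta \sqrt{\kappa + \eta} \geq N^{\bc}$ is precisely what allows $\eta$ to shrink to scale $N^{-1}/\sqrt{\kappa+\eta}$ near the edge, yielding the sharp estimate $|n(E) - \widetilde{n}(E)| \prec 1/N$ uniformly in admissible $E$.

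Next I would invert this counting function bound. In the bulk, where $\widetilde{\rho}$ is bounded below by a positive constant, monotone inversion gives $|\lambda_k - \widetilde{\mu}_k| \prec 1/N$, which is stronger than the claim. Near the upper edge, the square-root behavior $\widetilde{\rho}(\widetilde{L} - x) \asymp \sqrt{x}$ established in the proposition cited before Theorem~\ref{optimal_local_law_sparse} gives $\widetilde{L} - \widetilde{\mu}_k \asymp ((N - k + 1)/N)^{2/3}$. Applying $|n - \widetilde{n}| \prec 1/N$ at $E = \lambda_k$ and inverting the classical density then yields $|\lambda_k - \widetilde{\mu}_k| \prec N^{-2/3}(N-k+1)^{-1/3}$; the lower edge follows by symmetry of $\widetilde{\rho}$.

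The main obstacle is the edge analysis, for two intertwined reasons: (i) the edge $\widetilde{L}$ itself is random, depending on the coefficients $a_{2d}$ which are polynomials in the $h_{ij}$, so one must combine a priori control that no eigenvalue lies significantly above $\widetilde{L}$ with the smooth dependence of $\widetilde{L}$ on the $a_{2d}$ and the estimate $|a_{2d}| \prec q^{1-d}$ to ensure $\widetilde{L}$ has fluctuations below the $N^{-2/3}$ scale; (ii) the Helffer--Sjöstrand cutoff must reach just past this random edge, demanding a delicate tuning of $\eta_0$ against the admissible domain of the local law. These two points are exactly where~\cite{lee2021higher} invests the bulk of the edge analysis, and where naive Wigner-type arguments need to be refined in the sparse regime.
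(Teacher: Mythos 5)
The paper does not prove Theorem~\ref{thm_rigidity}; it states the result as an imported fact, citing \cite[Theorem~1.6]{huang2022edge} for the bulk and \cite[Theorem~2.11]{lee2021higher} for the edge. Your plan correctly reproduces the Helffer--Sj\"ostrand route from the averaged local law of Theorem~\ref{optimal_local_law_sparse} to counting-function control to rigidity that those references carry out, including the key edge-scale considerations and the extra care needed because $\widetilde{L}$ is random, so it is consistent with the source of the statement even though there is no in-paper proof to compare against.
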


From the eigenvalue rigidity in (\ref{optimal_rigidity}) near the edge regime, we know  \begin{equation}
	\abs{\lambda_N - \widetilde{L}} \prec N^{-2/3}.
	\label{lee_edge_rigidity}
\end{equation} 
In particular, if  $q \geq N^{1/6 + \bd}$ we have the following estimate (from \cite[Remark~2.5]{huang2022edge}) 
\begin{equation}
	\abs{\lambda_N - \widehat{L}} \prec N^{-2/3},
	\label{edge_rigidity}
\end{equation} 
with $\widehat{L} = 2 + 6 \kappa_4 /q^2 + \chi$ from \eqref{tilde_L_def}. Note that $\abs{\chi} \prec 1/(q \sqrt{N})$ from Assumption \ref{H_assumptions}. Moreover, 
from the square root behavior of $\widetilde{\rho}$ and the optimal rigidity estimates \eqref{optimal_rigidity}, we obtain that, for any fixed constants $C_1$ and $C_2$, (small) $\epsilon > 0$ and (large) $\Gamma > 0$, 
\begin{equation}
	\# \left\{  \lambda_j \in \big[\widehat{L} - C_1 N^{-2/3 + \epsilon}, \widehat{L} + C_2 N^{-2/3 + \epsilon}\big] \right\} \leq N^{2\epsilon},
	\label{total_eigenvalues_at_edge_bound}
\end{equation} with probability $\geq 1 - N^{-\Gamma}$.

\subsection{Relating extremal eigenvalue distribution to the Green function}
 This subsection closely follows Section 2.3 in \cite{Schnelli_Xu_2022}, with suitable adaptations to the sparse setting.  For $E_1 < E_2$ with $E_1, E_2 \in \mathbb{R} \cup \{\pm \infty\}$ we denote the eigenvalue counting function of $H$ by $\mathcal{N}(E_1, E_2) := \# \{j : E_1 \leq \lambda_j \leq E_2\}$. Let $E_L := \widehat{L} + 4 N^{-2/3 + \epsilon}$ with $\widehat{L}$ defined in (\ref{tilde_L_def}). Then for any $E \leq E_L$,  
\begin{equation}\label{number_ev}
	\mathcal{N}(E, E_L) = \Tr \mathcal{\chi}_E(H), \qquad \mbox{with}\quad  \mathcal{\chi}_E := \indicator_{[E, E_L]}. 
\end{equation} 
For $\eta > 0$ we define the mollifier $\theta_{\eta}$ by setting \begin{equation*}
	\theta_{\eta}(x) := \frac{\eta}{\pi (x^2 + \eta^2)} = \frac{1}{\pi} \im \frac{1}{x - \ii \eta}. 
\end{equation*} 
Choosing $\eta=\eta_N \ll 1$, the quantity below approximates the number of eigenvalues given in $[E, E_L]$.
\begin{equation}
	\Tr \chi_E \star \theta_{\eta} (H) = \frac{N}{\pi} \int \chi_E(y) \im m_N(y + \ii \eta) \di y = \frac{N}{\pi} \int_{E}^{E_L} \im m_N(y + \ii \eta) \di y.
	\label{smoothed_trace_identity}
\end{equation}
 The following lemma links the cumulative distribution function of the rescaled largest eigenvalue considered in Theorem \ref{main_theorem} to the expectation of a functional of the Green function considered in Theorem \ref{GFCT_random_shift}. The proof of Lemma \ref{second_eigenval_estimate_lemma} will be presented in Appendix \ref{app_lemma}.
  \begin{lemma}\label{second_eigenval_estimate_lemma}
	Set $l_1 = N^{3\epsilon} \eta$ and $l = N^{3 \epsilon} l_1$ such that $N^{-1 + \epsilon} \leq \eta \ll l_1 \ll l \ll E_L - E \leq C N^{-2/3 + \epsilon}$. Then for any $\Gamma > 0$, we have \begin{equation}
		\Tr \chi_{E + l} \star \theta_{\eta}(H) - N^{-\epsilon} \leq \mathcal{N}(E, \infty) \leq \Tr \chi_{E - l} \star \theta_{\eta}(H) + N^{-\epsilon},
		\label{high_prob_bound_second_eigenval_estimate_lemma}
	\end{equation} with probability bigger than $1 - N^{-\Gamma}$, for $N$ sufficiently large. 

 Let $F : \mathbb{R} \to \mathbb{R}$ be a smooth cut-off function, non-increasing for $x \geq 0$, with $F(x) = 1$ if $|x|\leq 1/9$ and $F(x)=0$ if  $|x|\geq 2/9$. Then we have 
   \begin{equation*}
		\EX \left[F\left(\Tr \chi_{E-l} \star \theta_\eta (H)\right)\right] - N^{-\Gamma} \leq \Prob\left(\mathcal{N}(E, \infty) = 0 \right) \leq \EX \left[F\left(\Tr \chi_{E+l} \star \theta_\eta (H)\right)\right] + N^{-\Gamma}.
	\end{equation*}
 \end{lemma}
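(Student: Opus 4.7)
The plan is in two stages: first establish the sandwich inequality \eqref{high_prob_bound_second_eigenval_estimate_lemma} on a high-probability event $\mathcal{E}$, then convert it into the expectation bound by exploiting the shape of the cutoff $F$. Since $l \ll E_L - E \leq C N^{-2/3+\epsilon}$, the point $E$ lies within $O(N^{-2/3+\epsilon})$ of $\widehat{L}$, so the analysis is entirely in the edge regime and both the edge rigidity \eqref{edge_rigidity} and the eigenvalue-count estimate \eqref{total_eigenvalues_at_edge_bound} are available.

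I would take $\mathcal{E}$ to be the intersection of (i) $\{\lambda_N < E_L - l\}$ (from \eqref{edge_rigidity}, since $l \gg N^{-2/3}$) and (ii) $\{\#\{j : \lambda_j \in [\widehat{L} - CN^{-2/3+\epsilon}, \widehat{L} + CN^{-2/3+\epsilon}]\} \leq N^{\epsilon/2}\}$ (from \eqref{total_eigenvalues_at_edge_bound} applied with an exponent smaller than $\epsilon$). On $\mathcal{E}$ one has $\mathcal{N}(E,\infty) = \mathcal{N}(E,E_L) \leq N^{\epsilon/2}$. Writing $\Tr \chi_{E \pm l} \star \theta_\eta(H) = \sum_i f^{\pm}(\lambda_i)$ with
\begin{equation*}
	f^\pm(x) = \tfrac{1}{\pi}\bigl[\arctan\bigl(\tfrac{x - E \mp l}{\eta}\bigr) - \arctan\bigl(\tfrac{x - E_L}{\eta}\bigr)\bigr],
\end{equation*}
the right inequality reduces to bounding $\mathcal{N}(E,E_L) - \Tr \chi_{E-l} \star \theta_\eta(H) \leq \sum_{\lambda_i \in [E, E_L]}(1 - f^-(\lambda_i))$. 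Each such $\lambda_i$ lies at distance $\geq l$ from $E - l$ and, by $\mathcal{E}$, at distance $\geq l - l_1$ from $E_L$, which gives $1 - f^-(\lambda_i) \leq C(\eta/l_1) = CN^{-3\epsilon}$ and a total contribution $\leq N^{\epsilon/2} \cdot N^{-3\epsilon} \ll N^{-\epsilon}$. The left inequality reduces to $\Tr \chi_{E+l}\star\theta_\eta(H) - \mathcal{N}(E,\infty) \leq \sum_{\lambda_i < E} f^+(\lambda_i)$, since $f^+(\lambda_i) \leq 1 = \indicator_{[E, \infty)}(\lambda_i)$ for $\lambda_i \geq E$ (using $\mathcal{E}$ to exclude $\lambda_i > E_L$). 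For $\lambda_i < E$, I would use the pointwise estimate $f^+(\lambda_i) \leq C\eta (E_L - E - l)/(E + l - \lambda_i)^2$ and sum against the square-root edge profile and classical locations (Theorem \ref{thm_rigidity}) to get a tail bound of order $\eta N (E_L - E - l)/\sqrt{l}$; plugging in $\eta \leq l\, N^{-6\epsilon}$ and $l, E_L - E \leq CN^{-2/3+\epsilon}$ makes this $\ll N^{-\epsilon}$.

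For the expectation bound, I would exploit the properties of $F$: $F(0) = 1$, $F$ non-increasing on $[0,\infty)$, and $F(x) = 0$ for $|x| \geq 2/9$. On $\mathcal{E}$, if $\mathcal{N}(E, \infty) = 0$, the sandwich gives $\Tr \chi_{E+l} \star \theta_\eta(H) \leq N^{-\epsilon} < 1/9$ for large $N$, so $F$ applied equals $1$; if $\mathcal{N}(E, \infty) \geq 1$, it gives $\Tr \chi_{E-l} \star \theta_\eta(H) \geq 1 - N^{-\epsilon} > 2/9$, so $F$ applied equals $0$. Combined with the monotonicity of $F$, this gives the pointwise chain $F(\Tr \chi_{E-l}\star\theta_\eta(H)) \indicator_\mathcal{E} \leq \indicator\{\mathcal{N}(E,\infty)=0\} \indicator_\mathcal{E} \leq F(\Tr \chi_{E+l}\star\theta_\eta(H)) \indicator_\mathcal{E}$. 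Taking expectations and absorbing the complement via $\|F\|_\infty < \infty$ and $\Prob(\mathcal{E}^c) \leq N^{-\Gamma'}$ with $\Gamma'$ chosen larger than $\Gamma$ produces the stated bound. The main technical obstacle throughout is the tail step for the left inequality: a direct application of the averaged local law \eqref{optimal_local_law_eq} produces an error $\sim N^{-2/3+\epsilon}/\eta$ that is too weak in the relevant $\eta$-range; one has to exploit the pointwise Cauchy decay of $\theta_\eta$ summed against the near-edge square-root density instead.
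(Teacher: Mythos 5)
Your overall strategy (defining a high-probability event $\mathcal{E}$ from the edge rigidity \eqref{edge_rigidity} and the eigenvalue count \eqref{total_eigenvalues_at_edge_bound}, establishing the sandwich \eqref{high_prob_bound_second_eigenval_estimate_lemma} on $\mathcal{E}$ by pointwise estimates on the $\theta_\eta$-smoothed indicator evaluated at eigenvalues, and then passing to expectations using the plateau of $F$) is a reasonable reconstruction and parallels the argument to which the paper defers (Lemmas 2.4--2.5 of \cite{Schnelli_Xu_2022}). The $F$-step and the right inequality are fine. However, there is a gap in the tail estimate for the left inequality, and you also cite a false reason for part of $\mathcal{E}$.

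For the event $\mathcal{E}$: your justification ``since $l \gg N^{-2/3}$'' for $\{\lambda_N < E_L - l\}$ is incorrect; $l = N^{6\epsilon}\eta$ with $\eta$ as small as $N^{-1+\epsilon}$ makes $l = N^{-1+7\epsilon} \ll N^{-2/3}$ when $\epsilon < 1/21$. The event is still high probability, but because $E_L - \widehat{L} = 4N^{-2/3+\epsilon}$ and $l \ll E_L - E$, so $E_L - l - \widehat{L} \gtrsim N^{-2/3+\epsilon}$ while $\lambda_N - \widehat{L} \prec N^{-2/3}$ by \eqref{edge_rigidity}. This is cosmetic.

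The real gap is in your bound on $\sum_{\lambda_i < E} f^+(\lambda_i)$. You use $f^+(\lambda_i) \leq C\eta(E_L - E - l)/(E+l-\lambda_i)^2$ and then replace the sum by the integral $\int_l^\infty N\sqrt{s}\,s^{-2}\,\dd s \sim N/\sqrt{l}$. This integral does not dominate the discrete sum when $l \ll N^{-2/3}$: the few eigenvalues just below $E$ are at distance $\sim N^{-2/3}$ on average but their positions fluctuate at scale $N^{-2/3}$, and rigidity does not preclude $\lambda_i \in [E - l, E)$. A single such eigenvalue contributes, by your pointwise bound, $\eta(E_L-E)/l^2 \sim N^{-5\epsilon-2/3}/l$, which for $l = N^{-1+7\epsilon}$ is $N^{1/3-12\epsilon}$; this exceeds $N^{-\epsilon}$ for all $\epsilon < 1/33$, so the argument as written fails precisely in the small-$\eta$, small-$\epsilon$ regime that the lemma must cover. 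The fix is to use the sharper identity
\begin{equation*}
f^+(\lambda_i) = \frac{1}{\pi}\left[\arctan\frac{E_L-\lambda_i}{\eta} - \arctan\frac{E+l-\lambda_i}{\eta}\right] \leq \frac{\eta(E_L - E - l)}{\pi(E+l-\lambda_i)(E_L - \lambda_i)},
\end{equation*}
and to note that for $\lambda_i < E$ one always has $E_L - \lambda_i \geq E_L - E \gtrsim N^{-2/3+\epsilon} \gg l$. Then each eigenvalue within $O(N^{-2/3+\epsilon})$ of $E$ contributes $\leq C\eta/l = CN^{-6\epsilon}$, and together with the count bound \eqref{total_eigenvalues_at_edge_bound} the ``close'' contribution is $\leq CN^{-4\epsilon}$. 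For the remaining eigenvalues at distance $\gtrsim N^{-2/3+\epsilon}$ one has $E+l-\lambda_i \sim E_L - \lambda_i \sim \widetilde L - \lambda_i$, and the resulting sum $\sum 1/(\widetilde L - \lambda_i)^2$ is controlled by rigidity as you indicate; this piece gives your stated order of magnitude.

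Finally, as a comparison: the paper's proof is a citation to \cite{Schnelli_Xu_2022} supplemented by the single new ingredient (an extension of the averaged local law \eqref{optimal_local_law_eq} down to all $\eta > 0$ via monotonicity of $\eta\mapsto\eta\im m(E+\ii\eta)$). So the paper explicitly keeps the averaged local law in play, whereas you argue it must be avoided because $(E_L-E)/\eta$ can be $N^{1/3}$; the apparent tension is resolved by the fact that the underlying argument in \cite{Schnelli_Xu_2022} applies the local law together with the counting estimate and rigidity in the refined way sketched above, not via a naive replacement of $\im m$ by $\im\widetilde m$ across the whole integral.
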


\subsection{Cumulant expansion formula} 

A crucial tool of this paper is the cumulant expansion formula presented below.

\begin{lemma}[Lemma 3.1 from \cite{cumulant_expansion_ref}]
	Let $h$ be a random variable with finite moments. The $k$th cumulant of $h$ is defined as \begin{equation*}
		c^{(k)} := (-\ii)^k \left(\frac{\di^k}{\di t^k} \log \EX \e^{\ii th}\right) \bigg|_{t=0}.
	\end{equation*} Let $f : \mathbb{R} \to \mathbb{C}$ be a smooth function. Then for any fixed $l$ we have \begin{equation}
		\EX[hf(h)] = \sum_{k+1=1}^l \frac{1}{k!}c^{(k+1)} \EX[f^{(k)}(h)] + R_{l+1},
		\label{cumulant_expansion_eq}
	\end{equation}
	provided that all the expectations in \eqref{cumulant_expansion_eq} exist. The error term satisfies \begin{equation}
		\abs{R_{l+1}} \leq C_l \EX \abs{h}^{l+1} \sup_{\abs{x} \leq M} \abs{f^{(l)}(x)} + C_l \EX \left[\abs{h}^{l+1} 1_{\abs{h} >M}\right] \norm{f^{(l)}}_\infty,
		\label{cumulant_expansion_error_bound}
	\end{equation} where $M > 0$ is an arbitrary fixed cutoff. 
	\label{cumulant_expansion_lemma}
\end{lemma}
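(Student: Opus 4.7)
The plan is to adapt the classical argument (going back to Khorunzhy--Khoruzhenko--Pastur and refined in the cited reference), which exploits the fact that, by definition, $c^{(k)}/k!$ is the $k$th Taylor coefficient at $t=0$ of $\psi(t):=\log\phi(t)$, where $\phi(t):=\EX[\e^{\ii th}]$ is the characteristic function of $h$.

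First I would establish the formula for Schwartz test functions $f$ via Fourier inversion. Writing $f(h) = (2\pi)^{-1}\int \hat{f}(t)\,\e^{\ii th}\,\dd t$ and Fubini, one obtains
\begin{equation*}
\EX[h f(h)] \;=\; \frac{1}{2\pi}\int \hat{f}(t)\,\EX[h\,\e^{\ii t h}]\,\dd t \;=\; \frac{-\ii}{2\pi}\int \hat{f}(t)\,\phi'(t)\,\dd t.
\end{equation*}
Since $\phi'(t)=\phi(t)\psi'(t)$ and $\psi'(t)=\ii\sum_{k\ge 0} c^{(k+1)}(\ii t)^k/k!$ formally, a Taylor expansion of $\psi'$ at $t=0$ to order $l-1$ with remainder, combined with the identity $(\ii t)^k\phi(t)=\EX[(\ii t)^k \e^{\ii th}]$ and the Fourier rule $\widehat{f^{(k)}}(t)=(\ii t)^k\hat f(t)$, yields
\begin{equation*}
\EX[hf(h)] \;=\; \sum_{k=0}^{l-1}\frac{c^{(k+1)}}{k!}\,\EX[f^{(k)}(h)] \;+\; R_{l+1},
\end{equation*}
which matches the claimed identity after relabeling the summation index.

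The second, and main, task is to prove the remainder bound in the precise form stated. Rather than try to estimate the Fourier-side remainder directly (which would only give a bound of the form $C_l\,\EX|h|^{l+1}\,\|f^{(l)}\|_\infty$ and does not exploit any localization), I would re-derive $R_{l+1}$ from a real-variable Taylor expansion: write, for any $M>0$,
\begin{equation*}
f(h) \;=\; \sum_{k=0}^{l-1}\frac{f^{(k)}(0)}{k!}h^k \;+\; \frac{1}{(l-1)!}\int_0^h (h-s)^{l-1} f^{(l)}(s)\,\dd s,
\end{equation*}
multiply by $h$, and take expectations. On the polynomial part, the Wick/Faà di Bruno identities relating moments to cumulants reproduce the leading sum $\sum_{k=0}^{l-1}\frac{c^{(k+1)}}{k!}\EX[f^{(k)}(h)]$ once one re-expresses $f^{(k)}(0)$ through $f^{(k)}(h)$ via the same Taylor polynomial (an elementary bookkeeping step since the substitution is exact on polynomials of degree $\le l-1$). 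The remainder $R_{l+1}$ is then the expectation of an integral involving $f^{(l)}$, which I split via $\EX[\,\cdot\,]=\EX[\,\cdot\,\indicator_{|h|\le M}]+\EX[\,\cdot\,\indicator_{|h|>M}]$.

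On the event $\{|h|\le M\}$ the integration variable $s$ stays in $[-M,M]$, so $|f^{(l)}(s)|\le \sup_{|x|\le M}|f^{(l)}(x)|$, producing the first summand $C_l\,\EX|h|^{l+1}\sup_{|x|\le M}|f^{(l)}(x)|$. On the event $\{|h|>M\}$ one only uses $|f^{(l)}(s)|\le\|f^{(l)}\|_\infty$, producing $C_l\,\EX[|h|^{l+1}\indicator_{|h|>M}]\,\|f^{(l)}\|_\infty$. The main obstacle is purely combinatorial: keeping track of the moment-to-cumulant identities so that the bookkeeping matches the stated coefficients $c^{(k+1)}/k!$ exactly; this is routine but needs care. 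No probabilistic subtlety enters beyond the existence of $\EX|h|^{l+1}$, which is among the assumed hypotheses.
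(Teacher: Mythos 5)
The paper does not prove this lemma at all: it is imported verbatim as Lemma 3.1 of the cited reference, so there is no in-paper argument to compare against and your attempt has to stand on its own. Your overall plan does work in outline: writing $f=P+r$ with $P$ the degree-$(l-1)$ Taylor polynomial at $0$ and $r$ the integral remainder, the identity $\EX[hP(h)]=\sum_{k=0}^{l-1}\frac{c^{(k+1)}}{k!}\EX[P^{(k)}(h)]$ is exact by the moment--cumulant recursion $m_{n+1}=\sum_{k=0}^{n}\binom{n}{k}c^{(k+1)}m_{n-k}$, so $R_{l+1}=\EX[h\,r(h)]-\sum_{k=0}^{l-1}\frac{c^{(k+1)}}{k!}\EX[r^{(k)}(h)]$ with $r^{(k)}(x)=\frac{1}{(l-1-k)!}\int_0^x(x-s)^{l-1-k}f^{(l)}(s)\,\dd s$. (Incidentally, the Fourier/Schwartz preamble is superfluous once you do this, and by itself it would only cover Schwartz $f$, not general smooth $f$ with existing expectations.)

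The gap is in the remainder estimate. You describe $R_{l+1}$ as "the expectation of an integral involving $f^{(l)}$" and your $M$-splitting handles only the piece $\EX[h\,r(h)]$, which indeed gives exactly the two stated terms. But the re-expansion of $f^{(k)}(0)$ through $\EX[f^{(k)}(h)]$ that you invoke produces the additional pieces $\frac{c^{(k+1)}}{k!}\EX[r^{(k)}(h)]$, and after the same split these yield terms of the form $\abs{c^{(k+1)}}\,\EX\!\left[\abs{h}^{l-k}\indicator_{\abs{h}>M}\right]\norm{f^{(l)}}_\infty$ and $\abs{c^{(k+1)}}\,\EX[\abs{h}^{l-k}]\sup_{\abs{x}\le M}\abs{f^{(l)}(x)}$, which are \emph{not} literally of the stated form. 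Since $M>0$ is arbitrary (possibly much smaller than the scale of $h$), you cannot bound $\abs{c^{(k+1)}}$ by $C_lM^{k+1}$; you must instead bound $\abs{c^{(k+1)}}$ by a sum of products of absolute moments of total order $k+1$ and then use $\EX[\abs{h}^{k+1}]\,\EX[\abs{h}^{l-k}]\le\EX[\abs{h}^{l+1}]$ (Lyapunov/H\"older) for the first-type terms, and the truncated product inequality $\EX[\abs{h}^{p}]\,\EX\!\left[\abs{h}^{q}\indicator_{\abs{h}>M}\right]\le 2\,\EX\!\left[\abs{h}^{p+q}\indicator_{\abs{h}>M}\right]$ (split $\EX[\abs{h}^p]\le M^p+\EX[\abs{h}^p\indicator_{\abs{h}>M}]$, use $M<\abs{h}$ on the event and H\"older on the restricted measure) for the second. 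With these two elementary but non-automatic inequalities the proof closes; without them the bound \eqref{cumulant_expansion_error_bound} for arbitrary $M$ does not follow from what you wrote.
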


\section{Proofs of main results}
\label{proof_of_main_results_section}

In this section we first prove Theorem \ref{main_theorem} using Theorem \ref{GFCT_random_shift} as a technical input. After this, we present the proof of Theorem \ref{GFCT_random_shift}.
\subsection{Proof of Theorem \ref{main_theorem}}

We will use the GFT in Theorem \ref{GFCT_random_shift} along with Lemma \ref{second_eigenval_estimate_lemma} to show Theorem \ref{main_theorem}. The proof closely follows Section 3 in \cite{Schnelli_Xu_2022}. 

\begin{proof}[Proof of Theorem \ref{main_theorem}]
 From the rigidity of eigenvalues in (\ref{edge_rigidity}), we have for any small $\epsilon>0$ and large $\Gamma>10$,
 $$\sup_{|r| \geq N^{\epsilon}} \Prob\Big( N^{2/3} (\lambda_N- \widehat{L} ) > r \Big) \leq  N^{-\Gamma}.$$
Together with right tail asymptotics of $\TW_1(r)$ from \cite{tw_asymptotics}, it suffices to show Theorem \ref{main_theorem} for $r_0 < r < N^{\epsilon}$. We use the notations \begin{equation*}
		E = \widehat{L} + N^{-2/3}r, \quad \text{and} \quad E_L = \widehat{L} + 4N^{-2/3 + \epsilon}. 
	\end{equation*} 
Fix $\eta = N^{-1 + \epsilon} + N^{2\epsilon} q^{-4}$ and $l = N^{6 \epsilon} \eta$, as in Lemma \ref{second_eigenval_estimate_lemma}. We choose $\epsilon > 0$ sufficiently small so that $l \ll N^{-2/3}$. From \eqref{smoothed_trace_identity} and Lemma \ref{second_eigenval_estimate_lemma} we see that \begin{equation}
	\begin{multlined}
		\EX\left[F\left(\frac{N}{{\pi}} \int_{N^{-2/3} r - l}^{4N^{-2/3 + \epsilon}} \im m_N (\widehat{L} + x + \ii \eta) \di x\right)\right] - N^{-\Gamma} \\
	 \leq\Prob\left(N^{2/3}(\lambda_N- \widehat{L}) \leq r \right) =	\Prob(\mathcal{N}(E, \infty) = 0) \\
	\qquad\qquad\qquad\qquad	\qquad\qquad \leq \EX \left[F\left(\frac{N}{{\pi}} \int_{N^{-2/3} r + l}^{4N^{-2/3 + \epsilon}} \im m_N (\widehat{L} + x + \ii \eta) \di x\right)\right] + N^{-\Gamma}. 
	\end{multlined}
	\label{scaled_cdf_estimate}
	\end{equation} 
Recall the comparison statement in Theorem \ref{GFCT_random_shift}. The lower and upper expectations in (\ref{scaled_cdf_estimate}) can be bounded using corresponding expectations for $\GOEast$, \ie 
\begin{align}\label{GFCT_eq}
	&\sup_{r_0 < r < N^{\epsilon}}	\abs{\EX \left[F \left(\frac{N}{\pi} \int_{N^{-2/3}r \pm l}^{4N^{-2/3 + \epsilon}} \im m_N(\widehat{L} + x + \ii \eta) \di x\right)\right] - \EX^{\GOEast} \left[F \left(\frac{N}{\pi} \int_{N^{-2/3}r \pm l}^{4N^{-2/3 + \epsilon}} \im m_N\left(2 + x + \ii \eta\right) \di x\right)\right]} \nonumber\\
	&\qquad\qquad \qquad \leq N^{-1/3 + 3 \epsilon} + \frac{N^{2/3 + 3 \epsilon}}{q^4}. 
\end{align} 
Note that similar inequalities as in (\ref{scaled_cdf_estimate}) also hold for $\GOEast$, which is a Wigner matrix, with $\widehat L$ replaced by 2. Combining with the convergence estimates \cite[Eq.(3.4)--(3.5)]{Schnelli_Xu_2022} for Wigner matrices,  we obtain that 
\begin{equation}
		\sup_{r_0 < r < N^{\epsilon}} \abs{\EX^{\GOEast} \left[F\left(\frac{N}{\pi} \int_{N^{-2/3}r \pm l}^{4N^{-2/3 + \epsilon}}\im m_N (2 + x + \ii \eta) \di x \right)\right] - \TW_1(r)} = O(N^{2/3}l).
		\label{wigner_estimate_imm_average}
	\end{equation} 
Combining \eqref{wigner_estimate_imm_average} with \eqref{GFCT_eq} and choosing $\epsilon < \omega /9$, we obtain
that \begin{equation}\label{final}
		\sup_{ r_0 < r < N^{\epsilon}} \abs{\EX \left[F \left(\frac{N}{\pi} \int_{N^{-2/3}r \pm l}^{4N^{-2/3 + \epsilon}} \im m_N(\widehat{L} + x + \ii \eta) \di x\right)\right] - \TW_1(r)} \leq N^{-1/3 + \omega} + N^{2/3 + \omega} q^{-4}.
	\end{equation} 
Theorem \ref{main_theorem} hence follows directly from \eqref{scaled_cdf_estimate} and (\ref{final}).

\end{proof}

\subsection{Proof of Theorem \ref{GFCT_random_shift}}
\label{interpolation_setup_section}

Our proof strategy will involve the following matrix interpolating flow $H_N(t)=(h_{ij}(t))_{1\leq i,j\leq N}$ given by
\begin{equation}
	H_{N}(t) := \e^{-\frac{t}{2}} W_N + \sqrt{1 - \e^{-t}} H_N, \quad t \geq 0,
	\label{H_dynamics_new}
\end{equation} 
where $H_N=(h_{ij})_{1\leq i,j\leq N}$ is an $N \times N$ sparse matrix satisfying Assumption \ref{H_assumptions}, and $W_N=(w_{ij})_{1\leq i,j\leq N}$ is an $N \times N$ matrix sampled from $\GOEast_N$, independent of $H_N$.  We will often omit the $N$ in the notations and write $H(t)=H_N(t)$. Note that $H(0) = W$ and $H(\infty) = H$. We define the Green function of the time dependent matrix $H(t)$ and its normalized trace by
\begin{equation*}
	G(t, z) := \frac{1}{H(t) - zI}, \quad m_N(t, z) := \frac{1}{N} \sum_{i} (G(t, z))_{ii}, \quad z \in \mathbb{C}^+.
\end{equation*} 
Moreover, choose the upper spectral edge with time-dependent shift as:
\begin{equation}
	\widehat{L}_t := 2 + \frac{6 \kappa_4 \left(1 - e^{-t}\right)^2}{q^2} + \chi(t),
	\label{tilde_L_t_def}
\end{equation}
where $\kappa_4$ from (\ref{cumulant_assumptions}) is deterministic and $\chi(t)$ is the random correction term given by
\begin{equation}
	\chi(t) := \left(1 - \e^{-t}\right) \frac{1}{N} \sum_{i,j} \left(h_{ij}^2 - \frac{1}{N}\right). 
	\label{new_random_edge_interpolation}
\end{equation}
Note that $\chi(0) = 0$ and $\chi(\infty) = \chi$ given by \eqref{tilde_L_def}. We also have $\widehat{L}(0)=2$ and $\widehat{L}(\infty) = \widehat{L}$  defined in (\ref{tilde_L_def}).

To show Theorem \ref{GFCT_random_shift} we will mainly study the quantity \begin{equation*}
	\im m_N(t, \widehat{L}_t + x + \ii \eta)=\im \Big[ \frac{1}{N}  \Tr \big(G(t, \widehat{L}_t + x + \ii \eta) \big)\Big],
\end{equation*} 
where the spectral parameter of $G(t,z)$ is chosen near the time-dependent  spectral edge $\widehat{L}_t$ defined in (\ref{tilde_L_t_def}). To abbreviate notations, we introduce
\begin{equation}\label{resolvent_t}
	z(t) := \widehat{L}_t + x + \ii \eta, 
\end{equation} 
and define the following observable:
\begin{equation}
	X(t) := N \int_{\gamma_1}^{\gamma_2} \im m_N(t, z(t)) \di x=\int_{\gamma_1}^{\gamma_2} \im \big[\Tr G(t, z(t))\big] \di x,
	\label{X_observable_def}
\end{equation}
where $\gamma_1$ and $\gamma_2$ are the parameters used in Theorem \ref{GFCT_random_shift}. Note that the left hand side of \eqref{GFCT_random_shift_eq} is exactly $\abs{\EX[F(X(\infty))] - \EX[F(X(0))]}$. To ease notations we will often omit $(t, z(t))$ when writing $G(t, z(t))$ and $m_N(t, z(t))$, \ie we set $G=G(t, z(t))$ and $m_N=m_N(t, z(t))$ in the remainder of this paper. 

Our goal is now to bound $\abs{\EX[F(X(\infty))] - \EX[F(X(0))]}$, and we do this by bounding $\dF \EX[F(X(t))]$ as follows. \begin{prop}
	Uniformly for $t \in [0, 10 \log N]$ and $N^{-1 + \epsilon} + N^{2 \epsilon} q^{-4} \leq \eta \leq N^{-2/3 - \epsilon/2}$ it holds that \begin{equation}
		\dF \EX[F(X(t))] \leq N^{-1/3 + 9 \epsilon/4} + \frac{N^{2/3 + 9 \epsilon/4}}{q^4},
		\label{derivative_observable_small_eq}
	\end{equation} for $N \geq N_0(C_0, \epsilon)$.
	\label{derivative_observable_small}
\end{prop}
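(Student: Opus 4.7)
The plan is to differentiate $\EX[F(X(t))]$ in $t$ and then organize the resulting terms via a cumulant expansion sorted by order in $q^{-1}$. The matrix variances of $H(t)$ remain $1/N$ throughout the flow \eqref{H_dynamics_new}, so the second-order terms cancel algebraically between the $W$ and $H$ contributions, leaving only terms of order $\geq 3$ with prefactor $1/(Nq^{k-2})$ from \eqref{cumulant_assumptions}. Odd-order terms will be handled by the unmatched-index trick, fourth-order terms by cancellation with the random shift $\widehat{L}_t$, and terms of order $\geq 6$ by direct power counting via the Ward identity and the local laws.

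Concretely, I would apply the chain rule and $\partial_t G = -G(\partial_t H)G - (\partial_t z) G^2$, where $\partial_t z(t) = \partial_t \widehat{L}_t$ by \eqref{resolvent_t}, to obtain
\[
\dF \EX[F(X(t))] = \EX\!\left[ F'(X(t))\int_{\gamma_1}^{\gamma_2}\!\!\im\Tr\!\left(-G(\partial_t H)G - (\partial_t\widehat{L}_t)\,G^2\right)\di x\right].
\]
Writing the trace entrywise and applying Lemma \ref{cumulant_expansion_lemma} to each $h_{ij}(t)$ generates a sum of terms of the shape
\[
\frac{1}{Nq^{k-2}}\EX\!\left[F^{(s)}(X(t))\cdot\big(\text{averaged product of Green function entries}\big)\right],\qquad k\geq 2,
\]
in which the $k=2$ part cancels. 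Because the lower bound $\eta\geq N^{-1+\epsilon}+N^{2\epsilon}q^{-4}$ is much smaller than $N^{-2/3}$, naive use of the entrywise local law Theorem \ref{entrywise_local_law} only yields off-diagonal bounds of size $1/(N\eta)=o(1)$, so cancellations must be extracted at every order below six.

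For the third- and fifth-order terms, I would exploit the fact (Definition \ref{def_unmatch}) that they necessarily carry at least one unmatched index. Iteratively expanding around that index, as in \cite{Schnelli_Xu_2022,green_function_comparison_reference}, forces enough off-diagonal Green function entries to appear that power counting with Theorem \ref{entrywise_local_law} and the Ward identity $\sum_j |G_{ij}|^2 = \im G_{ii}/\eta$ already produces a bound compatible with \eqref{derivative_observable_small_eq}. For the terms of order $k\geq 6$, the prefactor $1/(Nq^{k-2})\leq 1/(Nq^4)$ combined with $q\geq N^{1/6+\bd}$ and the $\eta$-lower bound absorbs the integration over $x\in[\gamma_1,\gamma_2]$ (whose length is $O(N^{-2/3+\epsilon})$) and the $t\in[0,10\log N]$ integration in Theorem \ref{GFCT_random_shift}; here the optimal averaged local law Theorem \ref{optimal_local_law_sparse} is used so that the $1/q$ error in the entrywise law does not pollute averaged traces.

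The main obstacle is the fourth-order terms, which scale as $1/(Nq^2)$ and naively contribute an amount of size $N^{-2/3+\epsilon}/q^2$ per $x$-point, above the target. These must cancel against the contribution from $\partial_t\widehat{L}_t = 12\kappa_4 \e^{-t}(1-\e^{-t})/q^2 + \e^{-t}\,\chi/(1-\e^{-t})$ appearing through the time-dependent spectral shift \eqref{tilde_L_t_def}. The cancellation is nontrivial because it couples many distinct averaged products of Green function entries simultaneously; the strategy is to apply the resolvent identity $H(t)G = zG - I$ together with further cumulant expansion to generate a large system of linear identities among all fourth-order averaged products, and then to solve that system to exhibit the cancellation. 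Because the combinatorial bookkeeping is enormous, this step is delegated to computer-aided symbolic algebra, producing the $4288$ identities described in the introduction and in Sections \ref{perfect_cancellation_section}, \ref{perfect_cancellation_section_general_case}, and \ref{implementation_details_section}. Once this cancellation is established, combining it with the odd-order and high-order estimates and integrating yields the bound \eqref{derivative_observable_small_eq}, as recorded in the later sections reducing the proposition to the $\im m_N$ estimate of Proposition \ref{imm_bound_random_edge_lemma}.
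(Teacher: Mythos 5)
Your proposal correctly captures the paper's high-level strategy for Proposition \ref{derivative_observable_small}: differentiate along the flow, cumulant-expand, observe algebraic cancellation at second order, handle odd-order terms by iterating around unmatched indices, cancel the $\kappa_4$-terms against the random shift $\widehat{L}_t$ via a computer-assembled system of resolvent identities, and power-count the terms of order $\geq 6$ using $q\geq N^{1/6+\bd}$ and the lower bound on $\eta$. That is the route the paper actually takes in Section \ref{general_F_cancellation_section}, culminating in Lemmas \ref{F_time_derivative_bound_lemma} and \ref{tau_F_lemma}.

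However, two details in your proposal are off from how the paper makes the estimates close. First, you invoke the optimal averaged local law (Theorem \ref{optimal_local_law_sparse}) to control the high-order averaged traces, but that theorem is stated for $H_N$ (and for the $H_N$-adapted edge $\widetilde L$), not for the interpolating ensemble $H(t)$ with the time-dependent shift $\widehat{L}_t$; moreover, with $\eta$ as small as $N^{-1+\epsilon}$, the $1/(N\eta)$ error in \eqref{optimal_local_law_eq} is $O(1)$ and useless unless one also knows $\im\widetilde m$ is small, which is exactly the nontrivial point. The paper instead proves the bound $\EX[\im m(t,\cdot)]\leq C N^{-1/3+\epsilon}$ directly for the full flow via a Gr\"onwall argument (Proposition \ref{imm_bound_random_edge_lemma}), and then plugs that bound into every case of Lemma \ref{tau_F_lemma}; you do acknowledge the reduction at the end, but the remark about Theorem \ref{optimal_local_law_sparse} is a potential misstep, since the power counting you describe would not close without replacing it by \eqref{imm_bound}. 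Second, you leave implicit how the $x$-integral over $[\gamma_1,\gamma_2]$ is removed; the paper uses $\tfrac{\dd}{\dd x}G = G^2$ and the operator $\Dim$ of \eqref{Dim_def} to convert $\int_{\gamma_1}^{\gamma_2}\sum_v \im(G_{va}G_{bv})\,\dd x$ into a boundary difference of single Green function entries, which is what makes the $N$ in $X(t)=N\int\im m_N\,\dd x$ harmless; without this step a naive length-of-interval estimate would lose a factor $N^{1/3+\epsilon}$. With those two points fixed, your outline matches the paper's proof.
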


The proof of Proposition \ref{derivative_observable_small} is postponed to Section \ref{general_F_cancellation_section}. Using Proposition \ref{derivative_observable_small} we are ready to show Theorem \ref{GFCT_random_shift}. 

\begin{proof}[Proof of Theorem \ref{GFCT_random_shift}]
		We wish to bound \begin{equation}
		\begin{multlined}
			\abs{\EX\left[F(X(0))\right] - \EX\left[F(X(\infty))\right]} \\
			\leq  \abs{\int_{0}^{10 \log N} \dF \EX[F(X(t))] \di t} + \abs{\EX\left[F(X(10 \log N))\right] - \EX\left[F(X(\infty))\right]}
		\end{multlined}
		\label{main_gfc_inequality}
	\end{equation} From Proposition \ref{derivative_observable_small} we may bound the integral term in \eqref{main_gfc_inequality} as \begin{equation*}
		\abs{\int_{0}^{10 \log N} \dF \EX[F(X(t))] \di t} \leq N^{-1/3 + 5\epsilon/2} +  \frac{N^{2/3 + 5 \epsilon/2}}{q^4}.
	\end{equation*} 
	
	Next we use a perturbation argument to handle the second term on the second line of \eqref{main_gfc_inequality}. Using the norm inequality $\norm{A}_{\text{max}} \leq \norm{A}_2 \leq N \norm{A}_{\text{max}}$, resolvent expansion, that $\norm{G}_2 \leq 1/\eta$, and the definitions of $H(t)$ and $z(t)$ in \eqref{H_dynamics_new} and \eqref{resolvent_t} we obtain \begin{equation*}
		\begin{split}
				& \quad \, \norm{G(10 \log N, z(10 \log N)) - G(\infty, z(\infty))}_{2} \\
	& \leq \norm{G(\infty, z(\infty))}_2 \norm{H(\infty) - H(10 \log N) - (z(\infty) - z(10 \log N))I }_2 \norm{G(10 \log N, z(10 \log N))}_2 \\
	& \prec \frac{1}{N^{7/2} \eta^2}.
		\end{split} 
	\end{equation*} Using that $F$ is Lipschitz continuous we have \begin{equation*}
		 \abs{\EX\left[F(X(10 \log N))\right] - \EX\left[F(X(\infty))\right]} \leq N^{-1}.
	\end{equation*} Inserting this bound into \eqref{main_gfc_inequality} finishes the proof. \end{proof}

\section{Estimate on $\EX[\im m_N]$} 
\label{imm_estimate_section}

The main ingredient in the proof of Proposition \ref{derivative_observable_small} is the next proposition, which also acts as a simpler version of Proposition \ref{derivative_observable_small}. Proposition \ref{imm_bound_random_edge_lemma} is essentially the same as Proposition \ref{derivative_observable_small} with $F$ chosen as $F(x) = x$, and without the averaging integral in the definition of $X(t)$. The ideas we use to prove Proposition \ref{imm_bound_random_edge_lemma} will also be used to prove Proposition \ref{derivative_observable_small}.

\begin{prop}
	 Recall $\bd > 0$ from Assumption \ref{H_assumptions}. For any $\epsilon$ such that $0 < \epsilon < \min\{\bd, 1/12\}$, and constant $C_0 > 0$ we define the spectral edge regime \begin{equation}
	 	\edge(t, \epsilon, C_0) \equiv \edge(t) :=  \left\{z = E + \ii \eta : \frac{N^{2\epsilon}}{q^4} + N^{-1+\epsilon} \leq \eta \leq N^{-2/3 - \epsilon/2}, E = \widehat{L}_t + x, \abs{x} \leq C_0N^{-2/3 + \epsilon}\right\}.
	 	\label{edge_regime}
	 \end{equation} Then uniformly for $z(t) = \widehat{L}_t + x + \ii \eta \in \edge(t)$ and $t \in [0, 10 \log N]$ we have 
	 \begin{equation}
	 	\abs{\dm  \EX[\im m(t, z(t))]} \leq N^{-\epsilon/8} \EX[\im m(t, z(t))] + N^{-1/3 + \epsilon/8}.
	 	\label{time_derivative_gronwall_bound}
	 \end{equation} As a consequence of \eqref{time_derivative_gronwall_bound} there exists a constant $C > 0$ independent of $\epsilon$ such that \begin{equation}
		\EX \left[\im m(t, \widehat{L}_t + x + \ii \eta)\right] \leq C N^{-1/3 + \epsilon},
		\label{imm_bound}
	\end{equation} uniformly for $\widehat{L}_t + x + \ii \eta \in \edge(t)$ and $t \in [0, 10 \log N]$.
	\label{imm_bound_random_edge_lemma}
\end{prop}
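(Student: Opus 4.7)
The plan is to differentiate via the chain rule,
\begin{equation*}
\dm \EX\bigl[\im m_N(t,z(t))\bigr]
= \left.\frac{\partial}{\partial t} \EX\bigl[\im m_N(t,z)\bigr]\right|_{z=z(t)}
+ \dot z(t)\,\EX\bigl[\partial_z \im m_N(t,z(t))\bigr],
\end{equation*}
and to bound each summand by the right-hand side of (\ref{time_derivative_gronwall_bound}). Because $w_{ij}$ and $h_{ij}$ are independent, the entries $h_{ij}(t)$ from (\ref{H_dynamics_new}) have cumulants $c^{(2)}(h_{ij}(t))=1/N$ (independent of $t$) and $c^{(k)}(h_{ij}(t))=(1-\e^{-t})^{k/2} c^{(k)}(h_{ij})$ for $k\ge 3$. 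Starting from $\partial_t|_{z}G=-G(\partial_tH)G$ and applying Gaussian integration by parts to the $w_{ij}$-part of $\partial_tH$ together with Lemma \ref{cumulant_expansion_lemma} to the $h_{ij}$-part, the second-cumulant contributions cancel by the OU-type design of the flow, and the $\partial_t|_z$ summand reduces to
\begin{equation*}
-\frac{\e^{-t}}{2}\sum_{k\ge 2}\frac{\kappa_{k+1}(1-\e^{-t})^{(k-1)/2}}{N q^{k-1}}\sum_{i,j}\EX\bigl[\partial_{ij}^{k}(G^2)_{ij}\bigr]+R_\ell,
\end{equation*}
truncated at some fixed order $\ell$ with remainder $R_\ell$ controlled by (\ref{cumulant_expansion_error_bound}). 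The derivatives $\partial_{ij}^{k}(G^2)_{ij}$ expand into averaged products of Green function entries through $\partial_{ij}G=-(Ge_{ij}G+Ge_{ji}G)$.

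For the $\dot z$ summand I compute $\dot z(t)=12\kappa_4\e^{-t}(1-\e^{-t})/q^2 + \e^{-t}\chi_0$, where $\chi_0:=\frac{1}{N}\sum_{ij}(h_{ij}^2-1/N)$, directly from (\ref{tilde_L_t_def})--(\ref{new_random_edge_interpolation}). The random piece $\e^{-t}\EX[\chi_0\,\partial_z\im m_N]$ is itself expanded by applying Lemma \ref{cumulant_expansion_lemma} to each $h_{ij}$ appearing in $\chi_0$, producing averaged products of Green function entries organized by powers of $1/q$ that fit into the same bookkeeping as the matrix-dynamics expansion. I then collect all contributions by cumulant order.

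Terms of cumulant order $k+1\ge 6$ carry a prefactor $1/q^4$ or smaller; bounding off-diagonal entries by the Ward identity $\sum_j |G_{ij}|^2=\eta^{-1}\im G_{ii}$ together with the entrywise local law (Theorem \ref{entrywise_local_law}), these are bounded by $N^{-1/3+\epsilon/8}$ under the assumption $\eta\ge N^{2\epsilon}/q^4 + N^{-1+\epsilon}$, which is precisely why these lower bounds on $\eta$ enter. For the odd orders $k+1=3$ and $k+1=5$, the terms contain at least one \emph{unmatched index} in the sense of \cite{green_function_comparison_reference}; iterating the cumulant expansion on that unmatched variable, along the lines of \cite{Schnelli_Xu_2022}, extracts extra off-diagonal factors so that power counting with the local law again yields negligible contributions. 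The crucial fourth-cumulant ($1/q^2$) terms from the matrix dynamics must cancel exactly against the deterministic $12\kappa_4\e^{-t}(1-\e^{-t})/q^2$ part of $\dot z$ together with the fourth-cumulant-order terms generated by expanding $\e^{-t}\chi_0\,\partial_z\im m_N$; establishing this precise algebraic cancellation is the main obstacle, and it requires generating a system of identities from resolvent and cumulant expansions, carried out via the symbolic algorithms described in Section \ref{sec:code}.

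Combining these estimates yields (\ref{time_derivative_gronwall_bound}); the multiplicative factor $\EX[\im m_N]$ on its right-hand side arises because Ward-identity bounds on off-diagonal entries are proportional to $\im m_N/(N\eta)$ rather than to a deterministic quantity. The bound (\ref{imm_bound}) then follows from Grönwall's inequality on $[0,10\log N]$: the factor $\exp(10 N^{-\epsilon/8}\log N)$ is $1+o(1)$, while the initial datum $\EX[\im m_N(0,\,2+x+\ii\eta)]$ for the $\GOEast$ matrix is bounded by $CN^{-1/3+\epsilon/2}$ by the averaged local law (Theorem \ref{optimal_local_law_sparse}) combined with the asymptotics (\ref{immsc_asypmtotics}).
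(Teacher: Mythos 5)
Your overall strategy matches the paper's (Sections~4--5): differentiating $\EX[\im m_N(t,z(t))]$ by the chain rule, applying the cumulant expansion, observing that the second-order contributions cancel by the OU design of the flow and of $\chi(t)$, reducing the crucial fourth-cumulant terms to an algebraic cancellation against $\partial_t \widehat L_t$ verified by computer-aided symbolic identities, bounding odd orders via iterated unmatched-index expansions, bounding orders $\ge 6$ via the Ward identity under the hypothesis $\eta \ge N^{2\epsilon}/q^4 + N^{-1+\epsilon}$, and then closing with Gr\"onwall on $[0,10\log N]$. Your computation of $\partial_t\widehat L_t$ and of the cumulants of $h_{ij}(t)$ is correct.

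There is, however, one genuine gap: the initial datum. You assert that $\EX[\im m_N(0,\,2+x+\ii\eta)]$ for $\GOEast$ is $\le CN^{-1/3+\epsilon/2}$ \emph{by the averaged local law (Theorem~\ref{optimal_local_law_sparse}) combined with the asymptotics \eqref{immsc_asypmtotics}}. This does not follow. The averaged local law controls $|\im m_N - \im \widetilde m| \prec \tfrac{1}{N\eta}$, and at the smallest admissible scale $\eta = N^{-1+\epsilon}$ this error is of order $N^{-\epsilon}$, which is far larger than the deterministic piece $\im m_{sc} \sim \sqrt{\kappa+\eta} \lesssim N^{-1/3+\epsilon/2}$. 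Taking expectations of the high-probability bound only yields $\EX[\im m_N] \lesssim N^{-\epsilon}$, not $N^{-1/3+\epsilon}$, so the Gr\"onwall estimate does not close with this input. (Note also that the domain restriction $N\eta\sqrt{\kappa+\eta}\ge N^{\bc}$ in Theorem~\ref{optimal_local_law_sparse} is not satisfied for $\eta\sim N^{-1+\epsilon}$ near the edge; the extension to such $\eta$ is carried out in Appendix~\ref{app_lemma}, but the error remains $1/(N\eta)$.) The paper instead imports this bound from \cite[Lemma~5.4]{Schnelli_Xu_2022}, which establishes $\EX[\im m(0,z(0))] \le CN^{-1/3+\epsilon}$ for GOE-type initial data by a separate argument; without some such refined estimate on the \emph{expectation} (as opposed to the high-probability deviation), your Gr\"onwall step has no valid starting value. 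A minor additional slip: in your opening display $\dot z(t)\,\EX[\partial_z\im m_N]$ treats $\dot z(t)$ as deterministic, but $\dot z(t)$ contains the random $\chi_0$ and must sit inside the expectation; your subsequent discussion shows you know this, but the display as written is not correct.
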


The proof idea behind \eqref{time_derivative_gronwall_bound} is to expand $\dm \EX [\im m(t, z(t))]$ using Lemma \ref{cumulant_expansion_lemma} in terms of cumulants of the matrix entries. The main difficulty is to show that the leading terms associated to the fourth order cumulants cancel with the terms generated from the shifted edge introduced in (\ref{tilde_L_t_def}). After cancellations we bound the remaining non-leading terms as the right hand side of \eqref{time_derivative_gronwall_bound}, by using the Ward identity together with the assumptions $q \geq N^{1/6 + \bd}$ and $\eta \geq N^{-1 + \epsilon} + N^{2 \epsilon}/q^4$ from $\edge(t)$ (see also Remark \ref{lower_q_bound_remark} below). We proceed to show that \eqref{time_derivative_gronwall_bound} implies \eqref{imm_bound}, by using Grönwall's inequality and a bound on $\EX[\im m(0, z(0))]$ from \cite[Lemma 5.4]{Schnelli_Xu_2022}.

\begin{proof}[Proof that \eqref{time_derivative_gronwall_bound} implies \eqref{imm_bound}]
		We start by defining $u(t) := \EX[\im m(t, \widehat{L}_t + x + \ii \eta)]$. Then we define $$v(t) := \e^{-tN^{-\epsilon/8}} u(t) - \int_{0}^{t} \e^{-rN^{-\epsilon/8}} N^{-1/3 + \epsilon/8} \di r. $$ Differentiating $v(t)$ and using \eqref{time_derivative_gronwall_bound} we get $\dm v(t) \leq 0$, which implies that $v(t) \leq u(0)$ for all $t \geq 0$, since by the definition of $v$ we have $u(0) = v(0)$. Rewriting the definition for $v(t)$ we may express $u(t)$. \begin{equation*}
		u(t) = \e^{t N^{-\epsilon/8}} v(t) + \int_{0}^{t} \e^{(t-r)N^{-\epsilon/8}}N^{-1/3 + \epsilon/8} \di r. 
	\end{equation*} Since $t \leq 10 \log N$ we have that asymptotically $\e^{t N^{-\epsilon/8}} \to 1$ as $N \to \infty$, which means that \begin{equation*}
		u(t) \leq 2 v(t) + N^{-1/3 + \epsilon/4},
	\end{equation*} for $N$ sufficiently large. Combining this with $v(t) \leq u(0)$ and Lemma 5.4 from \cite{Schnelli_Xu_2022}, which says that $u(0) \leq C N^{-1/3 + \epsilon}$ uniformly for $z(0) \in \edge(0)$, we see that \begin{equation*}
		\begin{split}
			u(t) & \leq 2v(t) + N^{-1/3 + \epsilon/4} \\
			& \leq 2u(0) + N^{-1/3 + \epsilon/4} \\
			& \leq C N^{-1/3 + \epsilon},
		\end{split}
	\end{equation*} where the constant $C$ is independent of $\epsilon$. 
\end{proof}

In the remaining of this section, we prove (\ref{time_derivative_gronwall_bound}). We now begin expanding $\dm \EX [m(t, z(t))]$, which is a function in $z(t)$ and $\{h_{ab}(t)\}_{a\leq b}$ with $h_{ab}(t)=h_{ba}(t)$:
\begin{align}\label{expand0}
	\dm \EX [m(t, z(t))] = &\dm \EX \left[\frac{1}{N} \sum_{v} G_{vv}\right] \nonumber\\
	=& \frac{1}{N} \sum_v \sum_{a \leq b} \EX \left[\frac{\partial (G(t, z))_{vv}}{\partial h_{ab}(t)} \bigg|_{z = z(t)} \frac{\partial h_{ab}(t)}{\partial t}\right] + \frac{1}{N} \sum_{v} \EX \left[\frac{\partial {G}_{vv}}{\partial z(t)}  \frac{\partial z(t)}{\partial t}\right].
\end{align} 
Using the matrix flow $H(t)$ in (\ref{H_dynamics_new}), $z(t)$ chosen in (\ref{resolvent_t}), the definition of Green function $G=G(t, z(t))=(H(t)-z(t))^{-1}$, and that $H(t)$ is real symmetric (which implies $G=G^{\mathrm{T}}$), we obtain from (\ref{expand0}) that
\begin{equation}\label{expand}
	\begin{split}
		\dm \EX [m(t, z(t))] = 
		& = \frac{1}{N} \sum_v \sum_{a \leq b} \EX \left[ \frac{\partial h_{ab}(t)}{\partial t} (-(2 - \delta_{ab})G_{va} G_{bv}) \right] + \frac{1}{N} \sum_{v} \EX \left[\frac{\partial \widehat{L}_t}{\partial t}  \sum_{j} G_{vj} G_{jv} \right] \\ 
		& = \frac{1}{N} \sum_{v,a, b} \EX \left[ - \frac{\partial h_{ab}(t)}{\partial t} G_{va} G_{bv} \right] + \frac{1}{N} \sum_{v, j} \EX \left[\frac{\partial \widehat{L}_t}{\partial t}  G_{vj} G_{jv} \right].
	\end{split}
\end{equation} 
By taking time derivatives of $h_{ab}(t)$ from (\ref{H_dynamics_new}) and $\widehat{L}_t$ from (\ref{tilde_L_t_def}), we obtain that
\begin{equation}
	\begin{split}
		\dm \EX [m(t, z(t))] 
		& = \frac{1}{N} \sum_{v,a, b} \EX \left[ - \frac{\e^{-t}}{2\sqrt{1-\e^{-t}}} h_{ab} G_{va} G_{bv} + \frac{1}{2} \e^{-\frac{t}{2}} w_{ab} G_{va} G_{bv} \right] \\
		& \quad + \frac{1}{N} \sum_{v, j} \EX \left[\frac{12 \e^{-t}(1-\e^{-t})\kappa_4}{q^2} G_{vj} G_{jv} \right] \\
		& \quad + \frac{1}{N} \sum_{v, j} \EX \left[\e^{-t}\frac{1}{N} \sum_{a,b}\left(h_{ab}^2 - \frac{1}{N}\right) G_{vj} G_{jv} \right]. 
	\end{split}
	\label{imm_time_derivative_expansion_pre_cumulant}
\end{equation}

We notice that several of the terms in \eqref{imm_time_derivative_expansion_pre_cumulant} have the form $\EX[hf(h)]$ as in the cumulant expansion formula \eqref{cumulant_expansion_eq} of Lemma \ref{cumulant_expansion_lemma}, with $f$ chosen as a product of Green function entries. Indeed, we have the following application of Lemma \ref{cumulant_expansion_lemma}. The proof will be postponed to the appendix. Throughout this section we will use Lemma \ref{cumulant_remainder_small_lemma_imm_case} without explicitly referring to it.

\begin{lemma}\label{lemma_expand_special}
	 Let $h_{ab}$ be a fixed entry of $H$ from Assumption \ref{H_assumptions} and let $f = f(t, H, W, x, \eta)$ be
	 \begin{equation}
		f := h_{ab}^{p_h} \prod_{i = 1}^{n} G_{x_i y_i},
		\label{f_imm_case}
	\end{equation} where $p_h$ is a non-negative integer. Then we have the cumulant expansion formula
	\begin{equation}
				\EX[h_{ab}f] = \sum_{k+1=1}^l \frac{\kappa_{k+1}}{Nq^{k-1}} \EX\left[\frac{\partial^k}{\partial h_{ab}^k}f\right] + R_{l+1},
				\label{specific_cumulant_expansion_formula_imm_case}
		\end{equation} with the remainder term $R_{l+1}$ satisfying 
\begin{equation}
		\abs{R_{l+1}} \prec \frac{1}{N q^{l-1}},
		\label{cumulant_remainder_small_eq_imm_case}
	\end{equation} uniformly for $t \in [0, 10\log N]$, $\eta \geq N^{-1 + \epsilon}$, and $\abs{x} \leq 1$. Furthermore, for a fixed entry $w_{ab}$ of $W$ we have the formula \begin{equation*}
		\EX[w_{ab}f] = \frac{1}{N} \EX \left[\frac{\partial}{\partial w_{ab}} f\right].
		\label{gaussian_cumulant_expansion}
	\end{equation*}
		\label{cumulant_remainder_small_lemma_imm_case}
\end{lemma}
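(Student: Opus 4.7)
The plan is to invoke Lemma \ref{cumulant_expansion_lemma} directly with $h = h_{ab}$ and the specific $f$ from \eqref{f_imm_case}, and then control the remainder term using the error bound \eqref{cumulant_expansion_error_bound} together with a standard cutoff argument. Substituting the cumulants $c^{(k+1)}(h_{ab}) = k!\,\kappa_{k+1}/(Nq^{k-1})$ from Assumption \ref{H_assumptions} into \eqref{cumulant_expansion_eq} immediately yields \eqref{specific_cumulant_expansion_formula_imm_case}, since the $k!$ in the cumulant cancels the $1/k!$ prefactor in \eqref{cumulant_expansion_eq}. The Gaussian identity for $w_{ab}$ is then automatic because centered Gaussians have vanishing cumulants of order $\geq 3$, so the expansion terminates with only the $k=1$ term surviving, with $c^{(2)}(w_{ab}) = 1/N$.

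For the remainder bound \eqref{cumulant_remainder_small_eq_imm_case}, I would fix a deterministic cutoff $M = N^{-1/2 + \tau}$ for a small $\tau > 0$; Markov's inequality combined with the moment bound in \eqref{moment_assumptions} then gives $\Prob(|h_{ab}| > M) \leq N^{-D}$ for any $D > 0$. The tail contribution (second summand of \eqref{cumulant_expansion_error_bound}) is negligible: the factor $\EX[|h_{ab}|^{l+1} \mathbf{1}_{|h_{ab}|>M}]$ decays faster than any polynomial in $N$, while $\norm{f^{(l)}}_\infty$ can be bounded polynomially after an auxiliary cutoff on $h_{ab}$ (using the deterministic bound $|G_{xy}| \leq 1/\eta$ for each Green function factor). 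The main contribution is the first summand, where the moment bound in \eqref{moment_assumptions} yields $\EX|h_{ab}|^{l+1} \leq C^{l+1}/(Nq^{l-1})$, reducing the task to establishing the uniform estimate $\sup_{|x|\leq M}|f^{(l)}(x)| \prec 1$.

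The principal technical step, and what I expect to be the main obstacle, is this uniform bound on derivatives. Iterating the resolvent differentiation rule $\partial G_{xy}/\partial h_{ab} = -(2-\delta_{ab})(G_{xa}G_{by}+G_{xb}G_{ay})/2$ and applying the Leibniz rule to the $h_{ab}^{p_h}$ factor expresses $\partial^l f/\partial h_{ab}^l$ as a finite sum of products of $O(l+n)$ Green function entries (together with a bounded power of $h_{ab}$), each of which is $O_\prec(1)$ by the entrywise local law in Theorem \ref{entrywise_local_law} at $z(t) \in \edge(t)$. To promote this to the required uniformity over $|h_{ab}| \leq M$, I would note that replacing $h_{ab}$ by an arbitrary value $x \in [-M, M]$ corresponds to a rank-two perturbation of $H$ of operator norm at most $2M \ll \eta$ in the regime $\eta \geq N^{-1+\epsilon}$ enforced by \eqref{edge_regime}; by the resolvent identity $G(\widetilde H, z) - G(H, z) = -G(H,z)(\widetilde H - H)G(\widetilde H, z)$ the local law passes to $G(\widetilde H, z)$ with the same error bounds, so each Green function entry remains $O_\prec(1)$ uniformly over the cutoff region. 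Combining these estimates yields \eqref{cumulant_remainder_small_eq_imm_case}.
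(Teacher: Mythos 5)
Your plan is structurally correct — apply Lemma \ref{cumulant_expansion_lemma} directly, substitute the cumulants, and then control the remainder via \eqref{cumulant_expansion_error_bound} with a cutoff argument — but the two quantitative choices at the heart of the remainder estimate are both wrong, and both errors stem from transplanting Wigner-scale intuition into the sparse regime.

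\textbf{The cutoff $M$ is too small.} You propose $M = N^{-1/2+\tau}$ and claim $\Prob(|h_{ab}|>M)\leq N^{-D}$ for every $D$ by Markov. In the sparse model the entries satisfy $|h_{ab}|\prec 1/q$, and since $q$ can be as small as $N^{1/6+\bd}$, the typical scale of a nonzero $h_{ab}$ is $\asymp 1/q \gg N^{-1/2+\tau}$. Indeed, using \eqref{moment_assumptions}, $\Prob(|h_{ab}|>M)\leq C^k/(Nq^{k-2}M^k)$; with your choice of $M$ and $q=N^{1/6+\bd}$ this bound equals $C^k N^{-2/3+2\bd+k(1/3-\bd-\tau)}$, which grows with $k$ rather than decaying. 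The cutoff must sit \emph{above} $1/q$, not around $N^{-1/2}$. The paper takes $M=q^{-3/4}$, which satisfies $1/q \ll M \ll 1$ and for which $\Prob(|h_{ab}|>M)\leq N^{-\Gamma}$ does hold, precisely because $|h_{ab}|\prec 1/q$.

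\textbf{The perturbation is not small compared to $\eta$.} You justify the uniformity of the local law over $|h_{ab}|\leq M$ by arguing that the induced rank-two perturbation of $H$ has operator norm $2M\ll\eta$. This is false: with any admissible cutoff we have $M\geq 1/q\geq N^{-1/2}$, whereas $\eta\leq N^{-2/3-\epsilon/2}$, so in fact $M\gg\eta$ and the naive operator-norm comparison $\|G(\widetilde H)-G(H)\|\leq\|G(\widetilde H)\|\,\|\widetilde H-H\|\,\|G(H)\|\leq 2M/\eta^2$ is useless. The correct mechanism, used in the paper's proof, is entrywise rather than in operator norm: write $G^{(ab)}$ for the Green function with $h_{ab}$ set to zero, apply the resolvent identity $G_{ij}=G^{(ab)}_{ij}-\big(G\,A\,G^{(ab)}\big)_{ij}$ with $A$ the rank-two perturbation, and take $\max_{i,j}$ of both sides. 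Since $A$ has at most two nonzero entries each bounded by $M$, one obtains $\max_{i,j}|G_{ij}|\leq \max_{i,j}|G^{(ab)}_{ij}|+CM\,\max_{i,j}|G_{ij}|\,\max_{i,j}|G^{(ab)}_{ij}|$, and this can be bootstrapped (in both directions) using the entrywise local law $\max_{i,j}|G_{ij}|\prec 1$ together with $M\ll 1$. The estimate $M\ll 1$ suffices here, not $M\ll\eta$. Without this replacement your uniform derivative bound $\sup_{|x|\leq M}|f^{(l)}(x)|\prec 1$ has no proof.
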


 Performing the above cumulant expansions to sufficiently large order $l$ on all expectations with an $h_{ab}$ or $w_{ab}$, we obtain 
\begin{equation}
	\begin{split}
	\dm \EX [m(t, z(t))] = & \frac{1}{2N^2} \sum_{v,a, b} - \frac{\e^{-t}}{\sqrt{1-\e^{-t}}} \sum_{k+1=1}^l \frac{\kappa_{k+1}}{q^{k-1}} \EX \left[\frac{\partial^k (G_{va} G_{bv})}{\partial h_{ab}^k}  \right]  \\ 
		&  + \frac{1}{2N^2} \sum_{v,a, b}  \e^{-\frac{t}{2}} \EX \left[ \frac{\partial (G_{va} G_{bv})}{\partial w_{ab}}  \right]  \\
		&  + \frac{1}{N} \sum_{v, j} \frac{12 \e^{-t}(1-\e^{-t})\kappa_4}{q^2} \EX \left[ G_{vj} G_{jv} \right]  \\
		&  + \frac{1}{N^3} \sum_{v, j, a, b} \e^{-t} \sum_{k+1 = 1}^{l} \frac{\kappa_{k+1}}{q^{k-1}} \EX \left[\frac{\partial^k (h_{ab} G_{vj} G_{jv})}{\partial h_{ab}^k}  \right]  - \frac{1}{N} \sum_{v, j} \e^{-t} \EX \left[ G_{vj} G_{jv} \right]  \\
		&  + O_{\prec}\left(\frac{N}{q^{l-1}}\right) .	%
	\end{split}
	\label{imm_time_derivative_expansion}
\end{equation}
Fixing and $D > 0$, we can choose the truncating order, $l$ in \eqref{imm_time_derivative_expansion} large enough such that $N/q^{l-1} \leq N^{-D}$.  We remark that the first two lines on the right side of (\ref{imm_time_derivative_expansion}) are from the matrix flow $H(t)$ in (\ref{H_dynamics_new}), while the third and fourth line are from the time dependent correction terms of $z(t)$ in (\ref{resolvent_t}), corresponding to the last two terms in (\ref{tilde_L_t_def}), respectively.

To compute the terms in \eqref{imm_time_derivative_expansion} precisely, we present the following differentiation rules for the Green function, which follow directly from the definition of the resolvent and (\ref{H_dynamics_new})--(\ref{resolvent_t}). 

\begin{lemma} We have the following differentiation rules for $G_{ij} = \left(G(t, z(t))\right)_{ij}$:  
		\begin{align}
		\begin{split}
			\frac{\partial G_{ij}}{\partial h_{ab}} & =  - \left(1 - \e^{-t}\right)^{1/2} \Big(G_{ia} G_{bj} + G_{ib} G_{aj}\Big) \\
			& \quad \, + \left(1 - \e^{-t}\right)^{1/2} \delta_{ab} G_{ia} G_{bj} \\
			& \quad \, + \left(1 - \e^{-t}\right) 4 h_{ab} \frac{1}{N} \sum_{u} G_{iu} G_{uj} \\
			& \quad \, - \left(1 - \e^{-t}\right) 2 \delta_{ab} h_{ab} \frac{1}{N} \sum_{u} G_{iu} G_{uj},
		\end{split} \label{G_h_derivative} \\
		\begin{split}
			\frac{\partial G_{ij}}{\partial w_{ab}} & =
			- \e^{-t/2} \Big(G_{ia} G_{bj} + G_{ib} G_{aj}\Big) + 
			\e^{-t/2} \delta_{ab} G_{ia} G_{bj}.
		\end{split} \label{G_w_derivative}
	\end{align} 
	\label{differentiation_lemma_imm_case}
\end{lemma}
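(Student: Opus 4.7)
The plan is to apply the chain rule carefully, observing that $G_{ij} = G(t, z(t))_{ij}$ depends on $h_{ab}$ through \emph{two} channels: through the matrix $H(t)$ in \eqref{H_dynamics_new}, and through the spectral parameter $z(t) = \widehat{L}_t + x + \ii\eta$, which contains $\chi(t)$ from \eqref{new_random_edge_interpolation}. For $w_{ab}$ only the first channel contributes since $z(t)$ has no dependence on the Gaussian entries. The two basic building blocks are the resolvent identity
\begin{equation*}
\frac{\partial G_{ij}}{\partial H(t)_{cd}} = -G_{ic}G_{dj}, \qquad \frac{\partial G_{ij}}{\partial z} = (G^{2})_{ij} = \sum_{u} G_{iu}G_{uj},
\end{equation*}
obtained by differentiating $G(H(t)-z)=I$.

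First I would handle the dependence on $H(t)$. Since we view $(h_{cd})_{c \leq b}$ as the independent variables and $H(t)$ is symmetric, I would split into cases $a<b$ and $a=b$. For $a<b$, only the entries $H(t)_{ab}$ and $H(t)_{ba}$ depend on $h_{ab}$, each with derivative $\sqrt{1-\e^{-t}}$, so the chain rule gives $-\sqrt{1-\e^{-t}}(G_{ia}G_{bj}+G_{ib}G_{aj})$. For $a=b$ only the diagonal entry $H(t)_{aa}$ depends on $h_{aa}$, producing $-\sqrt{1-\e^{-t}}G_{ia}G_{aj}$. These two cases are combined into the first two lines of \eqref{G_h_derivative} by using $\delta_{ab}$ to cancel the double-counted term present in the general formula.

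Next I would compute the $z(t)$-channel. Here $\partial z(t)/\partial h_{ab} = \partial \chi(t)/\partial h_{ab}$, and from $\chi(t) = (1-\e^{-t}) N^{-1} \sum_{i,j}(h_{ij}^{2}-1/N)$ one gets $4(1-\e^{-t})h_{ab}/N$ for $a<b$ (coming from the pair of terms $h_{ab}^{2}+h_{ba}^{2}$) and $2(1-\e^{-t})h_{aa}/N$ for $a=b$. Multiplying by $\partial G_{ij}/\partial z = \sum_u G_{iu}G_{uj}$ produces the last two lines of \eqref{G_h_derivative}, where again the $\delta_{ab}$ term corrects the off-diagonal formula on the diagonal. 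The formula \eqref{G_w_derivative} for $w_{ab}$ is obtained by the same chain-rule argument restricted to the $H(t)$-channel, with $\sqrt{1-\e^{-t}}$ replaced by $\e^{-t/2}$ according to \eqref{H_dynamics_new}.

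The proof is essentially an organized bookkeeping exercise; the only subtle point — and the one step worth flagging — is the symmetric-matrix accounting that distinguishes the off-diagonal case $a<b$ from the diagonal case $a=b$, which is precisely what the $\delta_{ab}$ correction terms in the statement encode. Once those factors of $2$ versus $1$ are tracked correctly, both identities follow directly from the two building blocks above and the definitions of $H(t)$, $z(t)$, $\widehat{L}_t$, and $\chi(t)$.
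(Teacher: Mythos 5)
Your proposal is correct and is exactly the computation the paper has in mind (the paper does not spell out a proof, stating only that the lemma ``follows directly from the definition of the resolvent and (\ref{H_dynamics_new})--(\ref{resolvent_t})''). You correctly identify the two chain-rule channels — the $H(t)$-dependence giving the first two lines and the $z(t)$-dependence through $\chi(t)\subset\widehat L_t$ giving the last two — and the $\delta_{ab}$-corrections for the diagonal case of a symmetric matrix are accounted for properly, including the factors $4$ versus $2$ from the unrestricted double sum in $\chi(t)$. One cosmetic typo: ``$(h_{cd})_{c\le b}$'' should read ``$(h_{cd})_{c\le d}$''.
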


Using Lemma \ref{differentiation_lemma_imm_case}, we aim to compute the right side of (\ref{imm_time_derivative_expansion}) and bound the resulting terms by $N^{-c\epsilon}\EX[\im m(t,z(t))]$ for some $c>0$. The remaining proof is outlined below. 

\smallskip

{\bf 1) Second order terms}:
The first observation is that the second order terms ($k=1$) in the cumulant expansions (\ref{imm_time_derivative_expansion}) cancel mostly with the terms on the second line of \eqref{imm_time_derivative_expansion}, with the error terms listed in (\ref{oringal}) below.  Such cancellations are consequences of the fact that the entries of $H(t)$ in (\ref{H_dynamics_new}) have invariant second cumulants. More precisely, the corresponding terms from the first two lines of the differentiation rule (\ref{G_h_derivative}) will cancel with those from (\ref{G_w_derivative}), so the remaining terms are from the last two lines of (\ref{G_h_derivative}) with the prefactor $h_{ab}$:
\begin{align}
	(*):=&\frac{1}{2N^2} \sum_{v,a, b} - \frac{\e^{-t}}{\sqrt{1-\e^{-t}}}  \EX \left[\frac{\partial (G_{va} G_{bv})}{\partial h_{ab}}  \right] +\frac{1}{2N^2} \sum_{v,a, b}  \e^{-\frac{t}{2}} \EX \left[ \frac{\partial (G_{va} G_{bv})}{\partial w_{ab}}  \right]\nonumber\\
	=&\frac{1}{N^2} \sum_{v,a, b} - \e^{-t}\sqrt{1+\e^{-t}} (2-\delta_{ab}) \EX \left[h_{ab}   \left(  \frac{1}{N}\sum_{u} \Big( G_{vu}G_{ua} G_{bv}+G_{va} G_{bu}G_{uv} \Big) \right) \right].\label{oringal}
\end{align}
Further expanding these terms with centered random variables $h_{ab}$ using Lemma \ref{lemma_expand_special} as in \eqref{imm_time_derivative_expansion}, we obtain
\begin{equation}
	\begin{split}
			(*)=&\frac{1}{N^3} \sum_{v,a, b} -\e^{-t}\sqrt{1+\e^{-t}} (2-\delta_{ab})  \sum_{k+1=1}^l \frac{\kappa_{k+1}}{q^{k-1}} \EX \left[\frac{1}{N} \sum_{u} \frac{\partial^k \Big( G_{vu}G_{ua} G_{bv}+G_{va} G_{bu}G_{uv} \Big) }{\partial h^k_{ab}}  \right]\\
		&\qquad +O_{\prec}\left(\frac{1}{q^{l-1}}\right),\label{111}
	\end{split}
\end{equation}
 with sufficiently large $l$ so that $N/q^{l-1} \leq N^{-D}$ for $D > 0$. Note that we have gained an additional $N^{-1}$ factor (from the variances of $h_{ab}$) compared to the original summations $\frac{1}{N^2}\sum_{v,a,b}[\cdots]$ in (\ref{oringal}).  We thus say the terms in (\ref{oringal}) with the prefactor $h_{ab}$ are {\it non-leading terms}, that will be collected in Definition \ref{def_gronwall_term} below. Thanks to the index $v$ in (\ref{imm_time_derivative_expansion}), all these terms have at least two off-diagonal Green function entries. Hence they can be controlled effectively using the Ward identity:
 \begin{align}\label{ward_identity}
    \frac{1}{N} \sum_{j,k} |G_{jk}(z)|^2=\frac{\im m_N(z)}{\eta}, \qquad z=E+\ii \eta, \quad \eta>0.
 \end{align}
In general for any non-leading term belonging to Definition \ref{def_gronwall_term}, one can obtain the desired upper bound as in \eqref{time_derivative_gronwall_bound} using Lemma \ref{lemma_lemma_tau_g_ineq} stated in the next section. 

Moreover, for the second order terms ($k=1$) on the fourth line of (\ref{imm_time_derivative_expansion}), the leading term from $\partial h_{ab}$ acting on $h_{ab}$ will cancel precisely with the last term on the fourth line of (\ref{imm_time_derivative_expansion}). This cancellation is due to the fact that the random correction $\chi(t)$ in (\ref{new_random_edge_interpolation}) is centered. Thus we obtain non-leading terms with the prefactor $h_{ab}$ given by
\begin{equation}
	\begin{split}
		&\frac{1}{N^3} \sum_{v, j, a, b} \e^{-t}   \EX \left[\frac{\partial (h_{ab} G_{vj} G_{jv}) }{\partial h_{ab}}  \right]  - \frac{1}{N} \sum_{v, j} \e^{-t} \EX \left[ G_{vj} G_{jv} \right] \\
	&= \frac{1}{N^3} \sum_{v, j, a, b} \e^{-t}   \EX \left[h_{ab} \frac{\partial (G_{vj} G_{jv}) }{\partial h_{ab}}   \right],
	\label{222}
	\end{split}
\end{equation}

where we can gain an additional factor $N^{-1}$ from further expanding $h_{ab}$ as in (\ref{111}). In fact these non-leading terms belong to case \ref{pheq1_vocc2} of Definition \ref{def_gronwall_term}, and can be bounded using Lemma \ref{lemma_lemma_tau_g_ineq} below.

\smallskip

{\bf 2) Fourth order terms}:
We next consider the most critical terms with the fourth cumulant $\kappa_4$, \ie fourth order terms with $k=3$ from the first and fourth line of (\ref{imm_time_derivative_expansion}), together with the third line of \eqref{imm_time_derivative_expansion}. %
Note that in the fourth line of (\ref{imm_time_derivative_expansion}), if no partial derivative $\partial h_{ab}$ acts on $h_{ab}$, then the resulting terms are indeed non-leading terms with the prefactor $h_{ab}$ (included in case \ref{pheq1_dqge1} of Definition \ref{def_gronwall_term}) from which one gains an additional $N^{-1}$ as in (\ref{111}). Hence we collect all the fourth order terms with the prefactor  $\kappa_4$ as
\begin{equation}
	\begin{split}
		\mbox{fourth order terms with } \kappa_4=&\frac{1}{2N^2} \sum_{v,a, b} - \frac{\e^{-t}}{\sqrt{1-\e^{-t}}}  \frac{\kappa_{4}}{q^{2}} \EX \left[\frac{\partial^3  (G_{va} G_{bv})}{\partial h_{ab}^3}  \right]\\
		&+\frac{1}{N} \sum_{v, j} \frac{12 \e^{-t}(1-\e^{-t})\kappa_4}{q^2} \EX \left[ G_{vj} G_{jv} \right]\\
		&+\frac{1}{N^3} \sum_{v, j, a, b} \e^{-t}  \frac{\kappa_{4}}{q^{2}} \EX \left[\frac{\partial^2 (G_{vj} G_{jv})}{\partial h_{ab}^2}   \right]+\mbox{non-leading terms}.
	\end{split}
	\label{term1}
\end{equation}
 We remark that the first line on the right side above is the collection of fourth order terms from the randomness of $H(t)$ in (\ref{H_dynamics_new}),  while the last two lines contain the fourth order terms originating from the time-dependent correction terms of $z(t)$ introduced in  (\ref{tilde_L_t_def}). The key observation is the cancellations between the above fourth order terms with $\kappa_4$ up to an desired error term. These cancellations crucially rely on the choice of the shifted edge in (\ref{tilde_L_t_def}), as introduced in \cite{huang2020transition,lee2021higher,huang2022edge}. 
In Section \ref{imm_estimate_section_2} below, we will show in Lemma \ref{time_derivative_bound_lemma} that the fourth order terms in (\ref{term1}) can be expanded into finitely many non-leading terms that can be bounded effectively by Lemma \ref{lemma_lemma_tau_g_ineq}. Such cancellations are highly non-trivial and will be proved with computer aided symbolic computations; see Section \ref{perfect_cancellation_section} for details. 

\smallskip

{\bf 3) Third and fifth order terms}: We claim that all the third and fifth order terms in (\ref{imm_time_derivative_expansion}) with $k=2,4$ are also non-leading terms (in particular they belong to cases \ref{pheq1_dqge1}--\ref{unmatched} of Definition \ref{def_gronwall_term}). 
More precisely, if the term has the prefactor $h_{ab}$ (case \ref{pheq1_dqge1}), then we gain an additional $N^{-1}$ from expanding $h_{ab}$ as in (\ref{111}). If the term comes with the prefactor $\delta_{ab}$ (case \ref{ddge1_dqge1}), one gains an additional $N^{-1}$ from $\delta_{ab}$ when summing over the indices $1\leq a,b\leq n$ (see \eg the argument in (\ref{example}) below). The remaining third (or fifth) order terms are the so-called {\it unmatched terms} (see case (\ref{unmatched}) and Definition \ref{def_unmatch} below), \eg 
$$2 \left(1 - \e^{-t}\right)^{1/2} \frac{N}{q N^{3}} \sum_{v, a, b} \EX \left[G_{va} G_{av} G_{bb} G_{ab}\right],$$
where the index $a$ or $b$ occurs three (or five) times as the row/column indices of the Green function entries. One could gain an additional $N^{-1}$ for such an unmatched term, by iterative expansions via the index $a$ (or $b$) that occurs an odd number of times (see the formal statement in Lemma \ref{prop_unmatched_term_bound} below). Similar iterative expansions were also used in \cite{Schnelli_Xu_2022} for Wigner matrices.

\smallskip

{\bf 4) Sixth and higher order terms}: For all the higher order terms in \eqref{imm_time_derivative_expansion} with $k\geq 5$, we gain enough smallness from the normalized cumulants (\ref{cumulant_assumptions}) using that $q\geq N^{1/6+\bd}$ and $\eta \geq N^{-1 + \epsilon} + N^{2\epsilon}/q^4$ from (\ref{edge_regime}). In fact they are non-leading terms belonging to case \ref{dqge4_dge2} of Definition \ref{def_gronwall_term}. 

\smallskip

To sum up: After expanding the right side of (\ref{imm_time_derivative_expansion}), the leading terms originating from the fourth order cumulant expansions together with the terms from the shifted edge $\widehat{L}_t $ chosen in (\ref{tilde_L_t_def}) cancel precisely, while the remaining non-leading terms can be bounded by $N^{-c\epsilon}\EX[\im m(t,z(t)))]$ using Lemma \ref{lemma_lemma_tau_g_ineq}. This sketches the proof of \eqref{time_derivative_gronwall_bound} and the details will be presented in the next section.

\section{Proof of \eqref{time_derivative_gronwall_bound} in Proposition \ref{imm_bound_random_edge_lemma}}
\label{imm_estimate_section_2}

Following the strategy outlined in the previous section, we now present the formal proof of \eqref{time_derivative_gronwall_bound} in Proposition \ref{imm_bound_random_edge_lemma}. Recall that we have expanded $\dm \EX [m(t, z(t))]$ in (\ref{imm_time_derivative_expansion}).

\subsection{General form of terms in (\ref{imm_time_derivative_expansion})}

  To study the terms on the right side of (\ref{imm_time_derivative_expansion}) computed using the differentiation rules in \eqref{G_h_derivative} and \eqref{G_w_derivative},  we introduce the following general form of averaged products of Green function entries. 
\begin{definition}[General term notation]
	Let $\mathcal{I} := \{v_j\}_{j=1}^m$ be a set of summation indices, and denote its cardinality as $\# \mathcal{I}$. Further, let $d_N$, $d_q$, $d_{\delta}$, and $p_h$ be non-negative integers, and let $\alpha : \mathbb{R}^+ \to [0, 1]$ be a smooth function denoting a time factor. We use the following notation to denote a general term:
	\begin{equation}
		c \, \alpha(t) \frac{N^{d_N}}{q^{d_q}N^{\# \mathcal{I}}}  \sum_{\mathcal{I}} \delta_{ab}^{d_{\delta}} \EX \left[h_{ab}^{p_h} \prod_{i=1}^{n} G_{x_i y_i}\right].
		\label{very_general_term}
	\end{equation} The notation $\sum_{\mathcal{I}}$ means that all indices in $\mathcal{I}$ are iterated from $1$ to $N$. The row index $x_i$ and column index $y_i$ of Green function entries are chosen from the index set $\{v_j\}$. Also, in \eqref{very_general_term} we use the convention $0^0 =1$  with $\delta_{ab}^0$. 
	
	The number of off-diagonal Green function entries in the product $\prod_{i=1}^n G_{x_i y_i}$ in \eqref{very_general_term} is referred to as the degree of the term, usually denoted $d$, i.e.,
	\begin{equation}\label{def_degree}
		d := \# \{1 \leq i \leq n : x_i \neq y_i\}.
	\end{equation} The notation $T$ will often be used to denote a general term on the form \eqref{very_general_term}.
	\label{general_term_notation_def}
\end{definition}

Using the differentiation rules in \eqref{G_h_derivative} and \eqref{G_w_derivative}, we see that all the terms obtained in \eqref{imm_time_derivative_expansion} are of the form (\ref{very_general_term}) with $d_N = 1$. Additionally, thanks to the index $v$ in \eqref{imm_time_derivative_expansion}, all the terms have degree $\geq 2$. In other words, each term contains at least two off-diagonal Green function entries.

\begin{example} 
	One example of a third order term in (\ref{imm_time_derivative_expansion}) with $k=2$ is given by
	\begin{equation}\label{example_unmatch}
		- 2 \e^{-t} \left(1 - \e^{-t}\right)^{1/2} \frac{N}{q N^{3}} \sum_{v, a, b} \EX \left[G_{va} G_{av} G_{bb} G_{ab}\right],
	\end{equation}
	which is of the form described in Definition \ref{general_term_notation_def} with $\mathcal{I} = \{v, a, b\}$, $d_N = 1$, $d_q = 1$, $d_\delta = 0$, $p_h = 0$, $c = -2$, $\alpha(t) = \e^{-t} \left(1 - \e^{-t}\right)^{1/2}$, $n = 4$, $(x_1, y_1) = (v, a)$, $(x_2, y_2) = (a, v)$, $(x_3, y_3) = (b, b)$ and $(x_4, y_4) = (a, b)$. It has degree $d = 3$.
	
	Moreover, if we instead have $d_\delta=1$, we can write
	\begin{equation}\label{example}
			-2 \e^{-t} \left(1 - \e^{-t}\right)^{1/2} \frac{N}{q N^{3}} \sum_{v, a, b} \delta_{ab}\EX \left[G_{va} G_{av} G_{bb} G_{ab}\right]= -2 \e^{-t} \left(1 - \e^{-t}\right)^{1/2} \frac{1}{q N^{2}} \sum_{v, a} \EX \left[G_{va} G_{av} G_{aa} G_{aa}\right].
	\end{equation}
\end{example}

We will often but not always have the additional restrictions $d_{\delta} = p_h = 0$ on the general form \eqref{very_general_term}, which yields a simpler form to manipulate. We remark that a term with $d_{\delta} \geq 1$ or $p_h \geq 1$ is indeed smaller than the same term setting $d_{\delta}= p_h=0$. In the former case $d_{\delta}\geq 1$, this means $a=b$ in the summation, so only $a$ or only $b$ is an effective index. We hence gain an additional $N^{-1}$ summing over $a,b$ to reduce $d_N$ by one; see (\ref{example}) for instance. In the latter case $p_h\geq 1$, we gain additional $N^{-1}$ from further expanding in the centered random variable $h_{ab}$ with variance $N^{-1}$ as explained in (\ref{111}).

A crucial component of this paper are algorithms for symbolic computations used to compute with terms of the form \eqref{very_general_term}. We note that the representation of a term \eqref{very_general_term} with $d_{\delta} = p_h = 0 $ is not unique. 
If one permutes the indices in $\mathcal{I}$, the value of the sum is invariant. So we say the terms obtained by permuting indices are {\it equivalent}; see the formal definition below. Detecting whether two terms are equivalent or not is one of the main algorithmic problems. We describe how this and other implementational issues are handled in Section \ref{implementation_details_section}. 
\begin{definition}\label{def_equiv}
	We say that two terms of the form \eqref{very_general_term} with $d_{\delta} = p_h = 0$ are equivalent if they have the same $d_N$, $d_q$, $\alpha(t)$, and the indices of the first term can be permuted to obtain the second term. 
\end{definition}

\begin{example} [Equivalent terms]
	The terms \begin{equation*}
		\frac{1}{N^3} \sum_{v, a, b} \EX[G_{va}^2G_{aa}G_{bb}^2], 
	\end{equation*} and \begin{equation*}
		\frac{1}{N^3} \sum_{v, a, b} \EX[G_{vb}^2G_{bb}G_{aa}^2],
	\end{equation*} are equivalent since the latter may be obtained from the former by the map $(v, a, b) \mapsto (v, b, a)$. 
\end{example}

\subsubsection{Definition of non-leading terms}
As outlined in Section \ref{imm_estimate_section}, the main step to prove \eqref{time_derivative_gronwall_bound} is to show that the leading terms cancel on the right side of \eqref{imm_time_derivative_expansion}. We collect all the remaining non-leading terms in a set, denoted by $\Tau_G$. 
A substantial subclass of non-leading terms (indeed case 5 of Definition \ref{Tau_G_def} below) are introduced in the following definition:  
 \begin{definition}[Unmatched terms]\label{def_unmatch}
	We call a term of the form \eqref{very_general_term} with $d_{\delta} = p_h = 0$ an unmatched term if there is an unmatched index $v_j \in \mathcal{I}$ such that \begin{equation*}
		\# \{1 \leq i \leq n : x_i = v_j\} + \# \{1 \leq i \leq n : y_i = v_j\}
	\end{equation*} is odd. In other words, a term is unmatched if there is an index occurring an odd number of times in the Green function entries. Furthermore, denote the collection of unmatched terms of degree $\geq d$ by $\Tau_d^{o}$. 
\end{definition}
We remark that the third order terms with $k+1=3$ (in general with any odd $k+1$) on the right side of (\ref{imm_time_derivative_expansion}) are unmatched terms, see \eg (\ref{example_unmatch}).  Note that an unmatched term always has at least two indices occurring an odd number of times, since there are an even number of positions for the indices to appear. Now we are ready to define the set of non-leading terms $\Tau_G$:
\begin{definition} [Non-leading terms]\label{def_gronwall_term}
	Let $\Tau_{G1}$ be the set of terms on the form \eqref{very_general_term}, \textit{i.e.},
	\begin{equation}
		c \, \alpha(t) \frac{N^{d_N}}{q^{d_q}N^{\# \mathcal{I}}}  \sum_{\mathcal{I}} \delta_{ab}^{d_{\delta}} \EX \left[h_{ab}^{p_h} \prod_{i=1}^{n} G_{x_i y_i}\right],
	\end{equation}
 with $d_N \leq 1$ and that additionally satisfy at least one of the following conditions: 
	\begin{enumerate}
		\item $d_q \geq 4$ and $d \geq 2$, 
		\label{dqge4_dge2}
		\item $p_h \geq 1$ and there is an index $v \in \mathcal{I}$ distinct from $a$ and $b$ occurring at least twice in the off-diagonal Green function entries of $\prod_{i=1}^n G_{x_i y_i}$, 
		\label{pheq1_vocc2}
		\item $p_h \geq 1$ and $d_q \geq 1$, 
		\label{pheq1_dqge1}
		\item $d_{\delta} \geq 1$ and $d_q \geq 1$,
		\label{ddge1_dqge1}
		\item $p_h = d_{\delta} = 0$, $d_q \geq 1$ and the term is unmatched (recall Definition \ref{def_unmatch}). %
		\label{unmatched} 
	\end{enumerate} 
Moreover, let $\Tau_{G2}$ be the set of terms of the form  \begin{equation}
	c \alpha(t) \frac{N^{d_N}}{q^{d_q}N^{\# \mathcal{I}}}  \sum_{\mathcal{I}} \EX \left[ \big( x + \ii \eta+\chi(t)\big) \prod_{i=1}^{n} G_{x_i y_i}\right], \qquad d_N \leq 1, ~ d_q \geq 2,
	\label{general_term_chi}
\end{equation}
where $x + \ii \eta+\chi(t)=2-z(t)$, with $z(t)$ given by (\ref{resolvent_t}). Such terms will appear due to the prefactor $2-z(t)$ from using the identity (\ref{ggg}) below.

Finally define the set of non-leading terms by
\begin{equation*}
	\Tau_G := \Tau_{G1} \cup \Tau_{G2}.
\end{equation*} 
With a slight abuse of notation, we will use  $O(\Tau_G)$ to indicate a sum of finitely many non-leading terms in $\Tau_G$, which may vary from line to line.
\label{Tau_G_def}
\end{definition}

\begin{example}
	Two examples of terms in $\Tau_G$ are: \begin{equation*}
		\e^{-t} \frac{N}{q N^2} \sum_{a, b} \EX\left[G_{aa}G_{ab}G_{bb}\right], \quad \text{and,} \quad  \e ^{-t} \left(1 - \e^{-t}\right)^2 \frac{N}{q^4 N^3} \sum_{v, a, b} \EX\left[G_{va}G_{av}G_{aa}^2G_{bb}^3\right].
	\end{equation*} The first term is unmatched since both $a$ and $b$ occurs an odd number of times, and hence it belongs to case \ref{unmatched} of $\Tau_{G1}$ because $d_q = 1$ and $d_N = 1$. The second term belongs to case \ref{dqge4_dge2} of $\Tau_{G1}$ since $d_q = 4$, $d_N = 1$, and the degree is 2. 
\end{example}

	Now we are ready to analyze the expansion of $\dm \EX [m(t, z(t))]$ in (\ref{imm_time_derivative_expansion}) following the sketch outlined in Section \ref{imm_estimate_section}. Computing the right side of (\ref{imm_time_derivative_expansion}) using the differentiation rules in Lemma \ref{differentiation_lemma_imm_case}, the resulting terms are all in the general form (\ref{very_general_term}).  
	
	Recall the second order terms in \eqref{imm_time_derivative_expansion} with $k = 1$ that have been computed in (\ref{oringal}), \eqref{111}, and (\ref{222}). There we observe perfect cancellations among the leading terms, and the remaining terms in (\ref{oringal}) and (\ref{222}) have $p_h\geq 1$. Thanks to the index $v$ in (\ref{imm_time_derivative_expansion}), all these terms have two off-diagonal Green function entries.  Hence the terms in (\ref{oringal}) and (\ref{222}) with $p_h\geq 1$ are non-leading terms belonging to case \ref{pheq1_vocc2} of $\Tau_{G1}$. 
	
	As for the third and fifth order terms ($k = 2,4$)  in \eqref{imm_time_derivative_expansion}, when taking each partial derivative $\partial/\partial h_{ab}$ using \eqref{G_h_derivative}, we split the discussion into three cases: 1)  as in the first line of \eqref{G_h_derivative}, an extra $G$ is added to the Green function of entries, with one $a$ and one $b$ added to the set of the row and column indices; 2) one prefactor $\delta_{ab}$ is added as in the second and fourth line of \eqref{G_h_derivative}; 3) one prefactor $h_{ab}$ is added as in the third line of \eqref{G_h_derivative}. The crucial thing to note is that, if the resulting term from taking the $k$th partial derivative $\partial^{k}/\partial h^k_{ab}$ satisfies $p_h = d_{\delta} = 0$, then 
	the number of $a$'s and $b$'s has the same parity as the number of  derivatives $k$. Hence the resulting terms with $d_{\delta} = p_h = 0$ are unmatched terms  belonging to case \ref{unmatched} of $\Tau_{G1}$, those with $d_\delta \geq 1$ belong to case \ref{ddge1_dqge1} of $\Tau_{G1}$, and the remaining ones with $p_h \geq 1$ belong to case \ref{pheq1_dqge1} of $\Tau_{G1}$.

	In addition, all the sixth and higher order terms in \eqref{imm_time_derivative_expansion} with $k \geq 5$ will belong to case \ref{dqge4_dge2} of  $\Tau_{G1}$, since they all have $d_q \geq 4$ and have degree $\geq 2$ thanks to the index $v$.

	We next focus on the most critical fourth order terms with $k=3$ in (\ref{imm_time_derivative_expansion}). From the differentiation rule \eqref{G_h_derivative}, they are all of the form (\ref{very_general_term}) with $d_N=1$, $d_q = 2$. Terms with the prefactor $h_{ab}$ or $\delta_{ab}$ are clearly non-leading terms belonging to cases \ref{pheq1_dqge1}--\ref{ddge1_dqge1} of $\Tau_{G1}$. Hence the leading terms in \eqref{imm_time_derivative_expansion} are the terms with $p_h = d_{\delta} = 0$, together with the term on the third line of \eqref{imm_time_derivative_expansion}, which is from the deterministic edge shift in (\ref{tilde_L_t_def}).  All of these terms have $\alpha(t) = \e^{-t}(1-\e^{-t})$. More precisely, we have (with potentially renamed summation indices) 
\begin{equation} 
	\begin{split} \dm \EX[m(t, z(t))] & =
		12 \kappa_4  \e^{-t}(1 - \e^{-t}) \frac{N^1}{q^2N^2} \sum_{a,b} \EX \left[G_{ab}^2 \right] \\ 
		& \quad +24 \kappa_4 \e^{-t}(1 - \e^{-t}) \frac{N^1}{q^2N^3} \sum_{a,b,v} \EX \left[G_{ab}G_{av}G_{bv} \right]\\ 
		& \quad +48 \kappa_4 \e^{-t}(1 - \e^{-t}) \frac{N^1}{q^2N^4} \sum_{a,b,v,j} \EX \left[G_{aa}G_{bv}G_{bj}G_{vj} \right] \\ 		
		& \quad +72 \kappa_4 \e^{-t}(1 - \e^{-t}) \frac{N^1}{q^2N^4} \sum_{a,b,v,j} \EX \left[G_{ab}G_{aj}G_{bv}G_{vj} \right] \\ 
        & \quad +24 \kappa_4 \e^{-t}(1 - \e^{-t}) \frac{N^1}{q^2N^4} \sum_{a,b,v,j} \EX \left[G_{aj}^2G_{bv}^2 \right] \\ 		
		& \quad  +12 \kappa_4 \e^{-t}(1 - \e^{-t}) \frac{N^1}{q^2N^3} \sum_{a,b,v} \EX \left[G_{aa}^2G_{bb}G_{bv}^2 \right]\\ 
		& \quad +36 \kappa_4 \e^{-t}(1 - \e^{-t}) \frac{N^1}{q^2N^3} \sum_{a,b,v} \EX \left[G_{aa}G_{ab}G_{av}G_{bb}G_{bv} \right] \\ 
		& \quad +36 \kappa_4 \e^{-t}(1 - \e^{-t}) \frac{N^1}{q^2N^3} \sum_{a,b,v} \EX \left[G_{aa}G_{ab}^2G_{bv}^2 \right] \\ 
		& \quad +12 \kappa_4 \e^{-t}(1 - \e^{-t}) \frac{N^1}{q^2N^3} \sum_{a,b,v} \EX \left[G_{ab}^3G_{av}G_{bv} \right] \\
		& \quad + O(\Tau_G) + O_{\prec}(N^{-D}),
	\end{split} 
	\label{imm_diff_expansion}
\end{equation}
where the notation $O(\Tau_G)$ denotes a sum of finitely many non-leading terms from the set $\Tau_G$. 

The following key lemma states that the above leading terms in \eqref{imm_diff_expansion} cancel precisely up to a collection of finitely many non-leading terms in $\Tau_G$. We will accomplish this step with computer aided symbolic computations, to be explained in Section \ref{perfect_cancellation_section} below. 
\begin{lemma}[Leading terms cancel]
	Assume that $z(t) \in \edge(t)$ and fix any $D>0$. Then leading terms on the right side of (\ref{imm_diff_expansion}) can be expanded into a finite sum of non-leading terms in $\Tau_G$ up to an $O_{\prec}(N^{-D})$-error. Precisely  
	\begin{equation*}
		\dm \EX[m(t, z(t))] = O(\Tau_G) + O_{\prec}(N^{-D}),
	\end{equation*} uniformly for $z(t) \in \edge(t)$ and $t \in [0, 10 \log N]$. Here the notation $O(\Tau_G)$ denotes a sum of finitely many non-leading terms belonging to the set $\Tau_G$ (which is a  different sum than that in (\ref{imm_diff_expansion})).
	\label{time_derivative_bound_lemma}
\end{lemma}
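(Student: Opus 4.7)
The plan is to show that the nine leading fourth-order terms on the right side of (\ref{imm_diff_expansion}) lie in the linear span of a large family of identities, each holding modulo $O(\Tau_G) + O_{\prec}(N^{-D})$. The engine for generating one such identity is the resolvent identity $z(t) G_{ac} = \sum_b H_{ab}(t) G_{bc} - \delta_{ac}$: one selects an averaged seed expression, rewrites a factor $z(t) G_{ac}$ appearing inside it via this identity, and then reduces the resulting $\EX[h_{ab}\cdot(\ldots)]$ using the cumulant expansion formula of Lemma \ref{cumulant_expansion_lemma} (and the analogous Gaussian step for the $w_{ab}$ piece). Since $z(t) = 2 + 6\kappa_4(1-\e^{-t})^2/q^2 + \chi(t) + x + \ii\eta$, the summand with prefactor $6\kappa_4/q^2$ combines with an already present $1/q^2$ to yield $1/q^4$ and falls into case \ref{dqge4_dge2} of $\Tau_{G1}$, while the summand with prefactor $x + \ii\eta + \chi(t)$ lands directly in $\Tau_{G2}$. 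On the cumulant-expansion side, the variance contribution ($k=1$) preserves the $1/q^2$ scaling and produces a linear combination of other degree-$\geq 2$ fourth-order terms, whereas the fourth and higher-order cumulant contributions all carry $d_q \geq 4$ and thus belong to case \ref{dqge4_dge2}. The extra $\delta_{ab}$, extra $h_{ab}$, and index-parity byproducts generated by the differentiation rules in Lemma \ref{differentiation_lemma_imm_case} are absorbed into cases \ref{pheq1_vocc2}--\ref{unmatched} of $\Tau_{G1}$. Each choice of seed, Green function factor, and summation index thus yields one linear relation among fourth-order averaged terms, valid modulo $O(\Tau_G) + O_{\prec}(N^{-D})$.

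The next step is to enumerate all equivalence classes (per Definition \ref{def_equiv}) of fourth-order terms with time coefficient $\alpha(t) = \e^{-t}(1-\e^{-t})$ and degree $\geq 2$, and to generate the full system of identities by applying the mechanism above to every admissible seed and insertion point. Sparse linear algebra over rational coefficients then tests whether the specific nine-component coefficient vector appearing in (\ref{imm_diff_expansion}) lies in the span of the generated identities. The design of the shifted edge $\widehat{L}_t$ in (\ref{tilde_L_t_def}), following \cite{huang2020transition,huang2022edge,lee2021higher}, is precisely what guarantees this membership, with the third line of (\ref{imm_diff_expansion}) contributing the compensation associated to the deterministic $6\kappa_4/q^2$ part of the shift and with the absorption into $\Tau_{G2}$ accounting for the random $\chi(t)$ part.

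The main obstacle is the combinatorial scale: the 4288 identities mentioned in the introduction reflect the number of relations needed to close the cancellation, and the system is too large to manipulate by hand. Two concrete algorithmic issues must be addressed robustly: (a) canonicalizing each term under permutation of summation indices to identify equivalence classes without double counting, and (b) storing and row-reducing the resulting large sparse linear system. Both are handled by the implementation described in Section \ref{implementation_details_section}, whose output is a certificate that the nine coefficients on the right side of (\ref{imm_diff_expansion}) are simultaneously annihilated up to $O(\Tau_G) + O_{\prec}(N^{-D})$, thereby yielding Lemma \ref{time_derivative_bound_lemma}.
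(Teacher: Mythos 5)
Your overall strategy matches the paper's: generate a family of rational-coefficient identities among fourth-order averaged terms using the resolvent identity plus cumulant expansions, verify that the vector of nine leading coefficients appearing in \eqref{imm_diff_expansion} lies in the span of those identities, and delegate canonicalization of terms under index permutation and the sparse row-reduction to the implementation in Section~\ref{implementation_details_section}. Your classification of the byproducts (the $\delta_{ab}$, $h_{ab}$, higher-cumulant, and $2-z(t)$ pieces) into the appropriate cases of $\Tau_{G1}$ and into $\Tau_{G2}$ is also correct.

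There is, however, a genuine gap in the identity-generation mechanism as you describe it. You generate identities only by rewriting an existing factor $z(t) G_{ac}$ inside a seed via the resolvent identity; this is the paper's Rule~1 (with its reflection Rule~1'). But Rule~1 preserves type in the sense of Definition~\ref{def_type}: it injects a fresh index exactly twice and leaves the occurrence counts of all existing indices unchanged, so applied to a type-0 term it yields only type-0 terms, and applied to a type-AB term it yields only type-AB terms. The nine leading terms in \eqref{imm_diff_expansion} include both type-0 terms such as $\frac{N}{q^2N^2}\sum_{a,b}\EX[G_{ab}^2]$ and type-AB terms such as $\frac{N}{q^2N^3}\sum_{a,b,v}\EX[G_{aa}^2G_{bb}G_{bv}^2]$, so their coefficient vector cannot lie in the span of Rule~1 identities alone. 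What is missing is the paper's ``insert $1$'' mechanism: Rule~2 inserts $1 = \frac{1}{N}\sum_{j,k} h_{jk}(t)G_{kj} - \frac{2}{N}\sum_j G_{jj} + \frac{2-z(t)}{N}\sum_j G_{jj}$ with fresh indices $j,k$, and Rule~3 inserts $1 = \sum_j h_{aj}(t)G_{ja} - 2G_{aa} + (2-z(t))G_{aa}$ reusing an existing index $a$. Rule~3 raises the occurrence count of $a$ by two and is precisely what converts a type-0 term into type-A and a type-A term into type-AB, thereby bridging the two families. Without Rules~2 and~3 (and especially Rule~3), the linear system generated by your mechanism cannot contain \eqref{imm_diff_expansion} in its row space, and the claimed cancellation certificate cannot exist. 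A small side remark: the 4288-identity figure you cite is the count for the general-$F$ Lemma~\ref{F_time_derivative_bound_lemma}; for the present Lemma~\ref{time_derivative_bound_lemma} the system is $110\times 138$ and the solution uses 62 identities.
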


To estimate the non-leading terms in $\Tau_G$, we introduce the following lemma. 
 \begin{lemma} 
	For any term $T \in \Tau_G$, we have 
	\begin{equation}
		\abs{\im T} \leq N^{-\epsilon/4} \EX [\im m] + \frac{N^{\epsilon}}{N},
		\label{lemma_tau_g_ineq}
	\end{equation} 
uniformly for $t \in [0, 10 \log N]$ and $z(t) \in \edge(t)$, for sufficiently large $N$. 
	\label{lemma_lemma_tau_g_ineq}
\end{lemma}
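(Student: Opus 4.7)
I will prove the estimate case by case along the five conditions defining $\Tau_{G1}$ and the separate form $\Tau_{G2}$. In every case the toolkit is the same: the entrywise local law (Theorem~\ref{entrywise_local_law}) bounds diagonal Green function entries by $1+O_\prec(\Psi)$ and off-diagonals by $O_\prec(\Psi)$, and the Ward identity~\eqref{ward_identity} converts a pair of off-diagonal entries summed over their row/column indices into $N\,\im m_N/\eta$. The decisive budget is supplied by the edge regime~\eqref{edge_regime}: from $\eta\geq N^{2\epsilon}q^{-4}$ one gets $1/(q^4\eta)\leq N^{-2\epsilon}$, so that a single Ward application together with four accumulated factors of $1/q$ already extracts a prefactor of size $N^{-2\epsilon}\,\im m_N$, comfortably inside the target $N^{-\epsilon/4}\,\EX[\im m]$.

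\textbf{Cases~1 and 4, and $\Tau_{G2}$.} For a Case~1 term ($d_q\geq 4$, degree $d\geq 2$), bound the diagonal entries by $O_\prec(1)$, single out two off-diagonal entries, and apply Cauchy--Schwarz followed by Ward in the two summation indices these entries involve. The resulting bound is $\frac{N^{d_N-1}}{q^{d_q}\eta}\,\im m_N$, which under $d_N\leq 1$ and $d_q\geq 4$ is at most $\frac{1}{q^4\eta}\,\im m_N\leq N^{-2\epsilon}\im m_N$. Case~4 ($d_\delta\geq 1$, $d_q\geq 1$) reduces to Case~1 by using $\delta_{ab}$ to collapse $a=b$, which lowers the effective $d_N$ by one; the residual $d_q\geq 1$ together with the extra $1/N$ from the collapse then gives a bound $\frac{1}{Nq\eta}\im m_N\leq N^{-1/6-\bd-\epsilon}\im m_N$, which is even smaller. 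For a term in $\Tau_{G2}$, the prefactor $2-z(t)=x+\ii\eta+\chi(t)$ satisfies $|x+\ii\eta|\leq CN^{-2/3+\epsilon}$ and $|\chi(t)|\prec (q\sqrt N)^{-1}\leq N^{-2/3}$ under $q\geq N^{1/6+\bd}$, so combined with $d_q\geq 2$ and trivial local-law estimates on the remaining Green functions one obtains the required smallness.

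\textbf{Cases~2 and 3.} When $p_h\geq 1$, I apply Lemma~\ref{cumulant_remainder_small_lemma_imm_case} once more in the variable $h_{ab}$: the $k$-th cumulant contributes the prefactor $\kappa_{k+1}/(Nq^{k-1})$ and $k$ derivatives. The second-cumulant term provides a clean factor $1/N$ and drops $p_h$ by one, while higher cumulants produce further $1/q$ weights. After at most $p_h$ iterations, every Case~2 or Case~3 term is rewritten as a finite sum of descendants falling under Case~1 or Case~4 (plus an $O_\prec(N^{-D})$ remainder from truncating the cumulant expansion). The restriction in Case~2 that some $v\in\mathcal{I}\setminus\{a,b\}$ appears at least twice in the off-diagonals is precisely what guarantees that the descendants retain a Ward-amenable pair of off-diagonal entries sharing $v$, so that the reduction to Case~1 goes through.

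\textbf{Case~5: the main obstacle.} Here $p_h=d_\delta=0$, $d_q\geq 1$, and some index $v_*\in\mathcal{I}$ occurs an odd number of times. A direct local-law and Ward estimate is insufficient because a single $1/q$ only gives $q^{-1}\sqrt{\im m_N/(N\eta)}$, too weak at the edge. I will follow the iterated unmatched-index expansion of~\cite{Schnelli_Xu_2022}: apply the resolvent identity $(H-z)G=I$ to one occurrence $G_{v_* y}$, producing a sum $\sum_a h_{v_* a}(\cdots)$ plus $\Tau_{G2}$-type pieces, and then cumulant-expand each $h_{v_* a}$ by Lemma~\ref{cumulant_remainder_small_lemma_imm_case}. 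Each resulting descendant either becomes matched at $v_*$ and gains a factor $1/N$ from the second cumulant, or gains further $1/q$ factors from higher cumulants while the off-diagonal degree grows, or still carries an unmatched index and is iterated again. Termination of the recursion is the delicate point: one needs a monotone potential combining degree, number of unmatched indices, and the $q$-power that strictly decreases at every step, so that after a bounded number of iterations every surviving descendant either falls under Cases~1--4 or $\Tau_{G2}$, or is absorbed into an $O_\prec(N^{-D})$ remainder. The bookkeeping parallels the Wigner-case argument of~\cite[Section~5]{Schnelli_Xu_2022}, adapted to the sparse setting by tracking the extra $q^{-(k-1)}$ weight generated at each cumulant step.
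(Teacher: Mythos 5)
Your plan matches the paper's proof essentially case by case: local law plus Ward with the $\eta\geq N^{2\epsilon}q^{-4}$ budget for Case~1; a further cumulant expansion in $h_{ab}$ to gain $1/N$ when $p_h\geq1$; $\delta_{ab}$-collapse to gain $1/N$ when $d_\delta\geq1$; the small prefactor $|2-z(t)|\prec N^{-2/3+\epsilon}$ for $\Tau_{G2}$; and the iterated unmatched-index expansion for Case~5, which the paper isolates as Lemma~\ref{prop_unmatched_term_bound}. Two imprecisions in the proposal are worth flagging. First, your Case~4 estimate $\tfrac{1}{Nq\eta}\im m$ implicitly invokes a Ward step and therefore degree $\geq 2$ after the collapse, but case~\ref{ddge1_dqge1} of $\Tau_{G1}$ carries no degree hypothesis; the paper instead uses the degree-agnostic estimate Lemma~\ref{lemma_general_imm_bound} together with $q\geq N^{\epsilon}$. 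Second, the framing ``reduce Cases~2 and 3 to Cases~1 or 4'' is not quite right: after the additional cumulant expansion the descendants need not satisfy $d_q\geq4$ or $d_\delta\geq1$; what the expansion actually buys is a reduction of $d_N$ by one, and the bound then comes from Lemma~\ref{lemma_general_imm_bound} (Case~3) or the Ward step that the distinct index $v$ makes available (Case~2). Your Case~5 plan correctly identifies the iterated expansion and the termination subtlety but leaves it unresolved; the paper handles it in Lemmas~\ref{preliminary_unmatched_bound}--\ref{prop_unmatched_term_bound} with a bounded-iteration argument showing each step either raises the degree, contributes a $1/q$, or moves the unmatched index out of off-diagonal positions, and that only finitely many steps of the last type are possible before the degree must increase. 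None of these are wrong ideas, but as written the proposal would not compile into a complete proof of Case~4 for low-degree terms or of Case~5.
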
 

The proof is postponed to Section \ref{unmatched_terms_section}. Combining Lemmas \ref{time_derivative_bound_lemma} and \ref{lemma_lemma_tau_g_ineq} we have proved \eqref{time_derivative_gronwall_bound} and hence finished the proof of Proposition \ref{imm_bound_random_edge_lemma}.

\begin{remark}
		The assumption $q \geq N^{1/6 + \bd}$ is used to prove the estimate in (\ref{lemma_tau_g_ineq}) along with the restriction $\eta \geq N^{-1+\epsilon}+ N^{2\epsilon}/q^4$ in the regime $\edge(t)$ from \eqref{edge_regime}.  We remark that the restriction of $\eta \geq N^{2\epsilon}/q^4$ yields the additional error term $N^{2/3+\omega}q^{-4}$ in (\ref{eq_main}).
	More precisely, we need this assumption to bound the non-leading terms belonging to case \ref{dqge4_dge2} of $\Tau_{G1}$;
	see arguments used in \eqref{q_bound_used_eq} below. Note that one main class of terms in case \ref{dqge4_dge2} of $\Tau_{G1}$ are the sixth order cumulant terms in (\ref{imm_time_derivative_expansion}), \eg 
	\begin{equation*}
		120 \kappa_6 e^{-t}(1 - e^{-t})^2 \frac{N}{q^4N^3} \sum_{a,b,v} \EX \left[G_{aa}^3G_{bb}^2G_{bv}^2 \right].
	\end{equation*} 
	We expect that by refining the corrections to the spectral edge (\ref{tilde_L_t_def}) as in \cite{huang2022edge,lee2021higher}, these high order terms can also be canceled up to a sufficiently large order.
	\label{lower_q_bound_remark}
\end{remark}

 \subsection{Proof of Lemma \ref{time_derivative_bound_lemma}}
\label{perfect_cancellation_section}

This subsection is devoted to prove Lemma \ref{time_derivative_bound_lemma}. The proof contains two steps: \begin{itemize}
	\item[1)] {\bf Step 1: generate identities.} The first step is to introduce different expansion rules to generate non-trivial identities among the leading terms occurring in \eqref{imm_diff_expansion}, and in general, terms of the form \eqref{very_general_term} with $d_q = 2$, $d_N = 1$, and $d_{\delta} = p_h = 0$. These expansions rules are presented in details as Rules 1--3 below in Section \ref{sec:step1}.
		
	\item[2)]	
		 {\bf Step 2: show cancellation.} The second step is to check whether the leading terms in \eqref{imm_diff_expansion} will be canceled using the identities obtained in the first step. This can be done by encoding the identities into a large linear equation system and checking if a solution exists, as explained in Section \ref{sec:step2} below. 

\end{itemize}
The proof relies on symbolic computer-aided computations. In Section \ref{sec:code} we present details on implementing the algorithms needed in the proof.

\subsubsection{ \bf Step 1: generate identities.} \label{sec:step1}
We now describe how to generate non-trivial identities among the leading terms in (\ref{imm_diff_expansion}) with the common factor $\kappa_4$. Note that these leading terms are (up to constants) all in the following general form (\cf (\ref{very_general_term}) with $d_q = 2$, $d_N = 1$, $d_{\delta} = p_h = 0$):
\begin{equation}
	c\alpha(t)\frac{N}{q^{2}N^{\# \mathcal{I}}} \sum_{\mathcal{I}} \EX \left[\prod_{i=1}^{n} G_{x_i y_i}\right], \qquad  d=\# \{i : x_i \neq y_i\}\geq 2,
	\label{start_term}
\end{equation} 
with $c=1$, $\alpha(t) = \e^{-t}(1 - \e^{-t})$. The identities among the leading terms of the general form (\ref{start_term}) will be obtained using the expansion rules explained below. All of the rules use the resolvent identity:
\begin{equation}
	z(t)G_{ab} = \sum_{i} h_{ai}(t) G_{ib} - \delta_{ab}.
	\label{resolvent_identity}
\end{equation}

\textbf{Rule 1. Expanding an off-diagonal Green function entry} \\ 
Given a term in (\ref{start_term}), without loss of generality, we may assume the first Green function entry $G_{x_1y_1}$ is off-diagonal with $x_1=a,y_1=b$. Using $z(t) = 2 + (z(t) - 2)$, we rewrite \eqref{resolvent_identity} as 
\begin{align}\label{ggg}
	G_{ab} = \frac{1}{2} \sum_{j} h_{aj}(t) G_{jb} - \frac{\delta_{ab}}{2} + \frac{2 - z(t)}{2} G_{ab}.
\end{align}
Replacing the first entry $G_{ab}$ by (\ref{ggg}), the resulting terms from  $-\delta_{ab}/2$ and 
\begin{equation}\label{zzz}
	\frac{2-z(t)}{2} = - \frac{3 \kappa_4 (1 - e^{-t})^2}{q^2} - \frac{ x + \ii \eta + \chi(t)}{2},
\end{equation} 
are clearly non-leading terms in $\Tau_G$. In particular, the term with $-\delta_{ab}/2$ belongs to case \ref{ddge1_dqge1} of $\Tau_{G1}$, while the terms from (\ref{zzz}) belong to case \ref{dqge4_dge2} of $\Tau_{G1}$ and $\Tau_{G2}$, respectively. Hence we obtain
\begin{equation}
	\label{expansion_off_diagonal0}
	\begin{split}
		&c\alpha(t) \frac{N}{q^{2}N^{\# \mathcal{I}}} \sum_{\mathcal{I}} \EX \left[ G_{ab}\prod_{i=2}^{n} G_{x_i y_i}\right] \\
		& =	\frac{c}{2}  \alpha(t) \frac{N}{q^{2}N^{\# \mathcal{I}}} \sum_{\mathcal{I} \cup \{j\}} \EX \left[h_{a j}(t) G_{j b}\prod_{i=2}^{n} G_{x_i y_i}\right]+O(\Tau_{G}),
	\end{split}	
\end{equation}
with $j$ a fresh index outside the index set $\mathcal{I}$, where $O(\Tau_{G})$ denotes a sum of the non-leading terms obtained from the last two terms in (\ref{ggg}).

We next perform cumulant expansions on the terms with $h_{aj}(t)$ on the right side of \eqref{expansion_off_diagonal0}. It is not hard to check that the third and higher order terms in the cumulant expansions are non-leading terms belonging to $\Tau_{G}$, since the term on the left side of (\ref{expansion_off_diagonal0}) has degree $\geq 2$. Hence we obtain from (\ref{expansion_off_diagonal0}) that
\begin{equation}
	\begin{split}
		\label{expansion_off_diagonal}
		&c\alpha(t) \frac{N}{q^{2}N^{\# \mathcal{I}}} \sum_{\mathcal{I}} \EX \left[ G_{ab}\prod_{i=2}^{n} G_{x_i y_i}\right] \\
		& = \frac{c}{2} \alpha(t)\sqrt{1 - e^{-t}} \frac{N}{q^{2}N^{\# \mathcal{I}+1}} \sum_{\mathcal{I} \cup \{j\}} \EX \left[ \frac{\partial \big(G_{j b}\prod_{i=2}^{n} G_{x_i y_i}\big)}{\partial h_{a j}}  \right]\\
		& \quad +\frac{c}{2} \alpha(t)\e^{-\frac{t}{2}} \frac{N}{q^{2}N^{\# \mathcal{I}+1}} \sum_{\mathcal{I} \cup \{j\}}  \EX \left[  \frac{\partial \big(G_{j b}\prod_{i=2}^{n} G_{x_i y_i}\big)}{\partial w_{a j}}\right]+O(\Tau_{G})+O_{\prec}(N^{-D}),
	\end{split}
\end{equation}
where $O(\Tau_{G})$ is a collection of the non-leading terms from (\ref{expansion_off_diagonal0}) and from higher order cumulant expansions, the last error term $O_{\prec}(N^{-D})$ for any large $D>0$ is from truncating the cumulant expansions at sufficiently high order using \eqref{cumulant_remainder_small_eq_imm_case}. 

We recall the differentiation rules (\ref{G_h_derivative})-(\ref{G_w_derivative}) and use them to compute (\ref{expansion_off_diagonal}). Note that terms with the prefactor $\delta_{ab}$ or $h_{ab}$ are non-leading terms belonging to case \ref{pheq1_dqge1} and \ref{ddge1_dqge1} of $\Tau_{G_1}$. By direct computations and grouping similar terms, we have 
\begin{equation}
	\begin{split}
		\label{expansion_off_diagonal2}
		&c\alpha(t) \frac{N}{q^{2}N^{\# \mathcal{I}}} \sum_{\mathcal{I}} \EX \left[ G_{ab}\prod_{i=2}^{n} G_{x_i y_i}\right] \\
		& = \frac{c}{2} \alpha(t) \frac{N}{q^{2}N^{\# \mathcal{I}+1}} \sum_{\mathcal{I} \cup \{j\}} \EX \left[ \frac{\ppartial \big(G_{j b}\prod_{i=2}^{n} G_{x_i y_i}\big)}{\ppartial h_{a j}}  \right] + O(\Tau_G) + O_{\prec}(N^{-D}),
	\end{split}
\end{equation}
where we define the new ``differentiation rule'' below to keep leading terms only
\begin{align}
		\frac{\ppartial G_{ij}}{\ppartial h_{ab}} & =  - 
		\Big(G_{ia} G_{bj} + G_{ib} G_{aj}\Big), \label{new_rules_1}
\end{align}
and include in $O(\Tau_{G})$ the remaining non-leading terms from the original rules \eqref{G_h_derivative} and \eqref{G_w_derivative}. It is straightforward to check that the leading terms obtained from the new rule \eqref{new_rules_1} must have degree $\geq 2$. Hence we have obtained a linear combination of leading terms of the form \eqref{start_term} with degree $\geq 2$, plus a collection of finitely many non-leading terms belonging to $\Tau_{G}$ from Definition \ref{def_gronwall_term}. 

\medskip

\textbf{Rule 1'.}
Note that $G$ is symmetric and in particular $G_{ab} = G_{ba}$ so we can also rewrite \eqref{resolvent_identity} as \begin{equation*}
	G_{ab} = \frac{1}{2} \sum_{j} h_{bj}(t) G_{ja} - \frac{\delta_{ab}}{2} + \frac{2 - z(t)}{2} G_{ab}.
\end{equation*} Using this identity for $G_{ab}$ and repeating the arguments as in (\ref{expansion_off_diagonal0})-(\ref{expansion_off_diagonal2}), we get 
\begin{equation}
	\begin{split}
			&c\alpha(t) \frac{N}{q^{2}N^{\# \mathcal{I}}} \sum_{\mathcal{I}} \EX \left[G_{ab} \prod_{i=2}^{n} G_{x_i y_i}\right] \\
		=& \frac{c}{2} \alpha(t) \frac{N}{q^{2}N^{\# \mathcal{I}+1}} \sum_{\mathcal{I} \cup \{j\}} \EX \left[ \frac{\ppartial \big(G_{j a}\prod_{i=2}^{n} G_{x_i y_i}\big)}{\ppartial h_{b j}}  \right]
		+O(\Tau_{G})+O_{\prec}(N^{-D}),
		\label{expansion_off_diagonal_reflected}
	\end{split}
\end{equation}
using the new differentiation rule in (\ref{new_rules_1}). It might seem like \eqref{expansion_off_diagonal_reflected} gives exactly the same identity as \eqref{expansion_off_diagonal2}, but if for example the index $a$ occurs four times in the product of Green function entries and the index $b$ appears only twice, then we get a non-trivial identity by comparing \eqref{expansion_off_diagonal2} and \eqref{expansion_off_diagonal_reflected}. We also note that the expansion in Rule 1 (or Rule 1') is not unique. One could expand any off-diagonal Green function entry $G_{x_i y_i}$ in \eqref{start_term}.

\bigskip

\textbf{Rule 2. Inserting diagonal Green function entries} \\ 
Given any term in (\ref{start_term}), one could insert a factor 1 by the following identity:
\begin{equation}\label{relation}
	1=\frac{1}{N} \sum_{k, j} h_{jk}(t) G_{kj} - \frac{2}{N} \sum_{j} G_{jj} + \frac{2 - z(t)}{N} \sum_{j} G_{jj},
\end{equation} 
with fresh indices $j,k$. As explained in the first rule, the resulting terms obtained from the last term in (\ref{relation}) with the prefactor $2-z(t)$ are non-leading terms in $\Tau_G$.  Inserting a factor $1$ into the term in \eqref{start_term}  and performing cumulant expansions with respect to $h_{jk}(t)$ as in the first rule, we obtain
 \begin{equation}
	\begin{split}
		& 	c\alpha(t)	\frac{N}{q^{2}N^{\# \mathcal{I}}} \sum_{\mathcal{I}} \EX \left[1 \cdot  \prod_{i=1}^{n} G_{x_i y_i}\right] \\
	= &	c\alpha(t)  \frac{N}{q^{2}N^{\# \mathcal{I} + 2}} \sum_{\mathcal{I} \cup \{j, k\}}  \EX \left[\frac{\ppartial \big(G_{kj}\prod_{i=1}^{n} G_{x_i y_i}\big)}{\ppartial h_{jk}}\right] \\
		& \quad \, - 	2c\alpha(t)\frac{N}{q^{2}N^{\# \mathcal{I} + 1}} \sum_{\mathcal{I} \cup \{j\}}  \EX \left[G_{jj} \prod_{i=1}^{n} G_{x_i y_i}\right]+ O(\Tau_G)+O_{\prec}(N^{-D}),
	\end{split}
	\label{expansion_off_1}
\end{equation} 
 with the new differentiation rule \eqref{new_rules_1}, and where $j$ and $k$ are fresh indices outside the index set $\mathcal{I}$. The term on the last line is clearly of the form \eqref{start_term}. The analysis of the first two lines on the right side above is quite similar to that of \eqref{expansion_off_diagonal2}, so we omit it. Hence we have obtained a linear combination of leading terms of the form  \eqref{start_term} with degrees $\geq 2$ as in the first rule, while the other terms are non-leading belonging to $\Tau_G$.

\bigskip
\textbf{Rule 3. Inserting diagonal Green function entries with existing index} \\ Rule 3 is similar to Rule 2. Given any term in (\ref{start_term}), we may assume the index $a$ was used as a summation index in $\mathcal{I}$.   
Instead of (\ref{relation}), we use 
\begin{equation}\label{relation2}
	1 = \sum_{j} h_{aj}(t) G_{j a} - 2 G_{aa} + (2-z(t))G_{aa} ,
\end{equation} 
where $a$ is an existing index in $\mathcal{I}$ and $j$ is a fresh one. As in the second rule, we insert a factor $1$ into the term in \eqref{start_term} and apply cumulant expansions with respect to $h_{aj}(t)$. That is,
\begin{equation}
	\begin{split}
		& 	c\alpha(t)\frac{N}{q^{2}N^{\# \mathcal{I}}} \sum_{\mathcal{I}}  \EX \left[1 \cdot  \prod_{i=1}^{n} G_{x_i y_i}\right] \\
		& = 	c\alpha(t) \frac{N}{q^{2}N^{\# \mathcal{I}}} \sum_{\mathcal{I} \cup \{j\}}  \EX \left[ \frac{\ppartial \big(G_{a j}\prod_{i=1}^{n} G_{x_i y_i}\big)}{\ppartial h_{a j}}\right] \\
		& \quad \, - 	2 c\alpha(t) \frac{N}{q^{2}N^{\# \mathcal{I}}} \sum_{\mathcal{I}}  \EX \left[G_{aa} \prod_{i=1}^{n} G_{x_i y_i}\right]+O(\Tau_{G})+O_{\prec}(N^{-D}),
	\end{split}
	\label{expansion_off_1_reused_index}
\end{equation} 
with $a\in \mathcal{I}$. The analysis of the terms in \eqref{expansion_off_1_reused_index} is very similar to that of \eqref{expansion_off_1}, so we omit it.

\subsubsection{ \bf  Step 2: show cancellations.\nc}\label{sec:step2} 
Before we show the method to check the cancellations in (\ref{imm_diff_expansion}), we first introduce the concept of type-0 terms, type-A terms and type-AB terms, 
that were introduced in \cite{Schnelli_Xu_2022}.

\begin{definition}\label{def_type}	
	A type-0 term is a term of the form \eqref{start_term} where every index in $ \mathcal{I}$ occurs exactly twice in the product $\prod_{i=1}^{n}G_{x_i y_i}$ as the row or column index of Green function entries. 
	
	A type-A term is also a term of the form \eqref{start_term}, but where one index, which we usually call $a \in \mathcal{I}$ occurs exactly four times and all the other indices occur exactly twice in the product $\prod_{i=1}^{n}G_{x_i y_i}$. 
	
	Finally, a type-AB term is a term of the form \eqref{start_term} where there are two indices, usually called $a, b \in \mathcal{I}$ that occur exactly four times in the product $\prod_{i=1}^{n}G_{x_i y_i}$, and all other indices occur exactly twice. 
\end{definition}

 The simplest terms are type-0 terms. Since every index is occurring exactly twice, every type-0 term can be written on the form $c\alpha(t) \frac{N}{q^2N^{\sum p_i}} \EX \left[\prod_{i = 1}^{n} \Tr G^{p_i}\right]$. Then they are completely characterized by the finite sequence $(p_1, p_2, \ldots, p_n)$ (which can be assumed to be ordered). 
Note that Rules 1 and 2 both express a term of a certain type as a linear combination of terms of the same type, \eg we can use them to express a type-0 term as a linear combination of type-0 terms. 

But the problem is that the expansion of $\dm \EX[m(t, z(t))]$ in \eqref{imm_diff_expansion} contains both type-0 and type-AB terms. So we need a way to transform type-0 terms into type-AB terms to see cancellations. This is the purpose of introducing an additional Rule 3 using the existing indices $a$ and $b$ for expansions.
 If the left side of \eqref{expansion_off_1_reused_index} is a type-0 term, then the terms on the right will all be type-A terms. Similarly if we expand a type-A term via another existing index, say $b$, using Rule 3 again, then we obtain a linear combination of type-AB terms. Hence by using Rule 3 several times we can express any type-0 term as a linear combination of type-AB terms.

\bigskip

 We are now ready to show the cancellations in \eqref{imm_diff_expansion}.
The idea is to generate enough identities between type-0, type-A and type-AB terms through the expansion rules 1--3 presented above, and then encode the problem of checking the cancellations among the leading terms in \eqref{imm_diff_expansion} into the problem of solving a large linear equation system.

It works as follows.  Starting with a term of the form \eqref{start_term} with $c = 1$, $\alpha(t) = \e^{-t}(1 - \e^{-t})$, denoted by $T_{\text{start}}$, we may use any of the expansion rules 1--3 introduced in Section \ref{sec:step1} to write the identity in the following form:
\begin{equation}
	T_{\text{start}} = \sum_{k=1}^{K} c_k T_k + O(\Tau_G) + O_{\prec}\left(N^{-D}\right),
	\label{identity_generation_1}
\end{equation} 
for some $K\in \mathbb{N}$, with $c_k \in \mathbb{Q}$ being rational numbers, and the $T_k$s are of the form \eqref{start_term} with $c= 1$, $\alpha(t) = e^{-t}(1 - e^{-t})$ and $d\geq 2$. We remark that the order of the expansion terms $T_k$ $(1 \leq k \leq K)$ is irrelevant. Here $O(\Tau_G)$ denotes a finite sum of non-leading terms from the set $\Tau_G$. We will not track these non-leading terms carefully in the following.
We now move $T_{\text{start}}$ to the right side in \eqref{identity_generation_1} and obtain 
\begin{equation}
	0 = -T_{\text{start}} + \sum_{k=1}^{K} c_k T_k + O(\Tau_G) + O_{\prec}\left(N^{-D}\right).
	\label{zero_identity}
\end{equation} 

Next we aim to expand sufficiently many starting terms as in (\ref{zero_identity}).
The set of starting terms is denoted by $\Tau^{\text{start}}$. Then we  choose $\Tau^{\text{start}}:=\Tau_4^{\text{start}}$, with $\Tau_n^{\text{start}} (n\in \mathbb{N})$ introduced in Definition \ref{def_start} below. For each term $T_{\text{start}} \in \Tau^{\text{start}}$ we use the mechanism introduced below Definition \ref{def_start} step by step to generate a system of identities on the form \eqref{zero_identity}, \ie
\begin{equation}
	0 = -T^{(m)}_{\text{start}} + \sum_{k=1}^{K_m} c^{(m)}_k T^{(m)}_k + O(\Tau_G) + O_{\prec}\left(N^{-D}\right), \qquad 1\leq m\leq M,
	\label{all_identity}
\end{equation} 
with $c^{(m)}_k$ being rational numbers, where $M\in \mathbb{N}$ is the (finite) number of generated identities. We remark that the order of these identities is irrelevant.
From the list of identities \eqref{all_identity} we define the set of basis terms, denoted by $\Tau^{\text{basis}}$, as the set containing the starting terms  $T^{(m)}_{\text{start}}\in \Tau^{\text{start}}$  and the leading expansion terms $\{T^{(m)}_k\}_{k=1}^{K_m}$ in (\ref{all_identity}) for all $1\leq m\leq M$. The terms in $\Tau^{\text{basis}}$ are considered unique only up to equivalence (recalling the definition of equivalent terms from Definition \ref{def_equiv} 
). Again the order of the basis terms in $\Tau^{\text{basis}}=\{T_l\}_{l=1}^{L}$ with $L:=\#\Tau^{\text{basis}}$ is irrelevant.
Numerical details about (\ref{all_identity}) can be found in Section \ref{sec:detail} below. In particular, to help the reader learn the mechanism to derive (\ref{list_of_identities}), we write out one example of the identities obtained above in Appendix \ref{details_about_imm_system} and we also list the first couple basis terms that we obtained in $\Tau^{\text{basis}}$.

We encode the list of identities obtained above into the following list of linear equations:
\begin{equation}
	\left(0 = \sum_{l=1}^{L} \widetilde{c}_{ml} T_{l} + O(\Tau_G) +O_{\prec}\left(N^{-D}\right) \right)_{m=1}^{M}, \qquad \Tau^{\text{basis}}=\{T_l\}_{l=1}^{L},
	\label{list_of_identities}
\end{equation}
where $\widetilde{c}_{ml}$ are rational numbers, and $\Tau^{\text{basis}}$ is the set of basis terms (up to equivalence).

Now we proceed with using  (\ref{list_of_identities}) to verify the cancellations among the leading terms in (\ref{imm_diff_expansion}), that are also basis terms in $\Tau^{\text{basis}}$. The corresponding positions of these terms appearing in the list $\Tau^{\text{basis}}=(T_l)_{l=1}^{L}$ are denoted by $\mathcal{L}_0\subset \{1,2,\ldots,L\}$. Then we rewrite \eqref{imm_diff_expansion} as
 \begin{equation}
	\dm \EX[m(t, z(t))] = \kappa_4 \sum_{l \in \mathcal{L}_0} b_l T_l + O(\Tau_G) + O_{\prec}(N^{-D}),
	\label{imm_expansion_leading_terms_notation}
\end{equation} 
with $b_l \in \mathbb{Q}~(l\in \mathcal{L}_0)$ being the rational coefficients in \eqref{imm_diff_expansion}. 
Our goal is to show that there exists $(x_{m})_{m = 1}^{M} \in \mathbb{Q}^M$ such that
\begin{equation}
	\sum_{l \in \mathcal{L}_0} b_l T_l = \sum_{m = 1}^{M} x_{m} \left( \sum_{l=1}^{L} \widetilde{c}_{ml} T_l \right), \qquad \Tau^{\text{basis}}=\{T_l\}_{l=1}^{L}.
	\label{goal_equation_system} 
\end{equation} 
holds. Such $(x_{m})_{m = 1}^{M} \in \mathbb{Q}^M$ can be found by solving the following linear system:
\begin{equation}
	\left(\sum_{m = 1}^{M}  \widetilde{c}_{ml} x_{m} = \widetilde{b}_{l}\right)_{l=1}^{L}, \qquad   \widetilde{b}_l:=\mathbf{1}_{l\in \mathcal{L}_0} b_l.
	\label{goal_linear_eq_sys}
\end{equation} 
The verification of (\ref{goal_linear_eq_sys}) is done symbolically with the aid of a computer, as explained in Section \ref{sec:solution}. We hence obtain from  \eqref{imm_expansion_leading_terms_notation}, (\ref{goal_equation_system}), and \eqref{list_of_identities} that 
\begin{equation}
	\begin{split}
		\label{final_step}
		\dm \EX[m(t, z(t))]=&\kappa_4 \sum_{l \in \mathcal{L}_0} b_l T_l + O(\Tau_G) + O_{\prec}(N^{-D}) \\ =& \kappa_4 \sum_{m = 1}^{M} x_{m} \left(O(\Tau_G) +O_{\prec}\left(N^{-D}\right)\right)\\
		=& O(\Tau_G) + O_{\prec}(N^{-D}).
	\end{split}
\end{equation}

So the proof of Lemma \ref{time_derivative_bound_lemma} is finished.

\subsubsection{\bf Derivation of (\ref{list_of_identities}) and solution of (\ref{goal_linear_eq_sys}).\nc }\label{sec:detail}
In this section, we define the set of starting terms $\Tau^{\text{start}}=\Tau_4^{\text{start}}$ as defined below, and explain the general mechanism used to generate the identities listed in (\ref{list_of_identities}) and the set of basis terms $\Tau^{\text{basis}}$.

\begin{definition}\label{def_start}
	Let $n$ be a positive integer and recall Definition \ref{def_type}. We define $\Tau_n^{\text{start}}$ as the set of terms consisting of all type-0, type-A, and type-AB terms of the form \eqref{start_term} with $c = 1$, $\alpha(t) = \e^{-t}(1 - \e^{-t})$, degree $d \geq 2$, and the number of summation indices $\# \mathcal{I} \leq n$, up to equivalence (see Definition \ref{def_equiv}). Without loss of generality, we assume the indices $a$ and $b$ will be the ones occurring more than twice in the product of Green function entries.
\end{definition}

We choose $\Tau^{\text{start}}=\Tau_4^{\text{start}}$, and start with $\Tau^{\text{basis}} = \emptyset$. For each term $T_{\text{start}} \in \Tau^{\text{start}}$, we apply the following steps to generate the identities listed in \eqref{list_of_identities} and add the corresponding basis terms to $\Tau^{\text{basis}}$.

{\bf Mechanism to generate identities from each $T_{\text{start}} \in \Tau^{\text{start}}$:}
\begin{enumerate}
	\item For each off-diagonal Green function entry of $T_{\text{start}}$, we apply Rule 1 and 1' to add two non-trivial identities as in \eqref{zero_identity} into the list \eqref{list_of_identities} and also add the corresponding basis terms (up to equivalence) to $\Tau^{\text{basis}}$.
	
	\item If $T_{\text{start}}$ satisfies $\# \mathcal{I} \leq 3$, then we use Rule 2 on $T_{\text{start}}$ to add one identity to the list \eqref{list_of_identities} as well as adding new basis terms (up to equivalence) to $\Tau^{\text{basis}}$.
	\item If the index $a$ occurs less than four times as the row or column indices of the product of Green function entries in $T_{\text{start}}$, we use Rule 3 with the existing index $a$ to add one identity to the list \eqref{list_of_identities} and add new basis terms to $\Tau^{\text{basis}}$. Moreover, if the index $b$ occurs less than four times in $T_{\text{start}}$, then we use Rule 3 with the existing index $b$ to add one more identity to \eqref{list_of_identities} and add new basis terms to $\Tau^{\text{basis}}$.
\end{enumerate}

Below in Section \ref{sec:basis} we describe in detail how we in practice add the basis terms to $\Tau^{\text{basis}}$.

Using the above procedure to generate identities with starting terms in $\Tau_4^{\text{start}}$ we get $M=138$ unique identities in \eqref{list_of_identities}, after removing obvious duplicates. Additionally, we get $L = 110$ unique basis terms in $\Tau^{\mathrm{basis}}$ up to equivalence. Consequently, the coefficient matrix $\widetilde{C}:=\left(\widetilde{c}_{ml}\right)_{1\leq m\leq M,1\leq l\leq L}$ of the system in \eqref{goal_linear_eq_sys} has the dimensions $110 \times 138$. The solution to the system, $(x_{m})_{m=1}^{M}$ contains 62 non-zero entries, meaning that in total we use 62 identities to see the cancellation of the leading terms in the expansion of $\frac{\di}{\di t} \EX[m (t, z(t))]$ in \eqref{imm_diff_expansion}. The rank of the coefficient matrix $\widetilde{C}$ is 98, so among the 138 identities we started with, 40 of them turned out to be superfluous. However, the rank is comparatively of the same size as the number of basis terms, \ie 98 is quite close to 110. This suggests that there are many non-trivial relations between the basis terms, so observing the cancellations between them requires effort. Additionally, the solution $(x_{m})_{m=1}^{M}$ we find consists only of integers. The entire linear system, its solution, and the set of basis terms, $\Tau^{\mathrm{basis}}$ are available in the accompanying GitHub repo \cite{githubrepo}.

\begin{remark}
Starting with $\Tau_4^{\text{start}}$ is non-essential. One could start with any $\Tau_n^{\text{start}}~(n\geq 4)$ to generate identities as explained above. We want as little redundancy as possible in our identities, which means that we aim to choose $n$ as small as possible, but still large enough to generate enough identities to see the cancellations. And it turns out that $n = 4$ was the smallest $n$ for which we could see the cancellation. \end{remark}

\subsection{Proof of Lemma \ref{lemma_lemma_tau_g_ineq}}
\label{unmatched_terms_section} 

This subsection is devoted to proving Lemma \ref{lemma_lemma_tau_g_ineq}, which gives upper bounds on the imaginary parts of non-leading terms in $\Tau_G$ from Definition \ref{def_gronwall_term}. 

 To bound terms of the form \eqref{very_general_term} we will need the following entrywise local law estimates for $G(t,z)$, uniformly for $t \in [0, 10 \log N]$ and $z \in S_0$:
	\begin{equation}\label{time_dependent_local_law}
	\sup_{t \in [0, 10 \log N],z \in S_0}\Big\{ \max_{1 \leq i,j \leq N} \big| (G(t, z))_{ij} - \delta_{ij}m_{sc}(z) \big|\Big\}\prec \frac{1}{q} + \sqrt{\frac{\im m_{sc}(z)}{N \eta}} + \frac{1}{N\eta} = \Psi(z).
	\end{equation}  
\begin{proof}[Proof of (\ref{time_dependent_local_law})]
	Recall $H(t)$ in (\ref{H_dynamics_new}). It is easy to check that $H(t)$ satisfies conditions 1--3 in Assumption \ref{H_assumptions} with the same constant $C$ in \eqref{moment_assumptions} for all $t \in [0, 10 \log N]$.  Then from Theorem \ref{entrywise_local_law}, for each fixed $t\in [0,10\log N]$, the local law estimates (\ref{time_dependent_local_law}) hold with probability bigger than $1-N^{-\Gamma}$ for any large $\Gamma>100$. We next choose a uniform mesh of $[0, 10 \log N]$ containing $N^{10}$ grid points such that the local law estimates (\ref{time_dependent_local_law}) hold for all the grid points with probability bigger than $1-N^{-\Gamma/2}$. 
	Together with the following (stochastic) Lipschitz  continuity of $H(t)$ in time, \ie $\|H(t)-H(s)\|_2 \prec N^2 |t-s|^{1/2}$, we have extended (\ref{time_dependent_local_law}) to all $t\in [0,10 \log N] $ simultaneously.
	  \end{proof}

Recalling from (\ref{edge_regime}), we have $\edge(t) \subset S_0 \cap \{E + \ii \eta : \eta \geq N^{-1 + \epsilon}\}$, with high probability. As an consequence of (\ref{time_dependent_local_law}) and  the boundedness of $\im m_{sc}(z)$ in \eqref{immsc_asypmtotics}, we have for $0 < \epsilon < \min\{1/12, \bd\}$ that
 \begin{equation}\label{entries_small}
 	\sup_{t \in [0, 10 \log N],z \in \edge(t)}\Big\{ \max_{1 \leq i,j \leq N} \big|(G(t, z))_{ij}-\delta_{ij}m_{sc}(z)\big| \Big\}\prec \Psi(z) \leq 3 N^{-\epsilon/2}.
 \end{equation} 
In the rest of this section we will often omit the argument when writing $\Psi(z)$ and will just write $\Psi$.

For any term of the form \eqref{very_general_term} with degree $d\geq 0$ defined in (\ref{def_degree}), we can bound it naively using the entrywise local law estimates in \eqref{time_dependent_local_law} and that $\abs{h_{ab}} \prec 1/q$, \ie
 \begin{equation}
	\abs{T} = \abs{c \, \alpha(t) \frac{N^{d_N}}{q^{d_q}N^{\# \mathcal{I}}}  \sum_{\mathcal{I}} \delta_{ab}^{d_{\delta}} \EX \left[h_{ab}^{p_h} \prod_{i=1}^{n} G_{x_i y_i}\right]}  \prec \frac{N^{d_N}}{q^{d_q+p_h}} \left(\Psi^d + \frac{1}{N}\right),
	\label{naive_local_law_bound}
\end{equation} 
with non-negative integers $d_N, d_q, p_h$, where the error $1/N$ comes from the diagonal cases when the values of two different indices in $\mathcal{I}$ coincide in the summations. 

Note that for terms with low degree $d \geq 0$, the above estimates using (\ref{entries_small}) are not enough. In the following lemma, we obtain an upper bound for the imaginary part $|\im T|$ using the Ward identity (\ref{ward_identity}). The proof will be postponed till Appendix \ref{sec:proof_unmatched}. 
\begin{lemma}
	Fix any small $\xi>0$.	Let $T$ be a term in the general form of \eqref{very_general_term}. Then we have \begin{equation}
		\abs{\im T} \leq N^{\xi}\frac{N^{d_N}}{q^{d_q}}\left(\EX \left[\im m \right] + \frac{1}{N}\right),
		\label{imm_lemma_eq}
	\end{equation} 
with non-negative integers $d_N, d_q$ from \eqref{very_general_term}, uniformly in $z(t) \in \edge(t)$ and $t \in [0, 10 \log N]$. 
	\label{lemma_general_imm_bound}
\end{lemma}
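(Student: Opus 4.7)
The plan is to reduce everything to $\EX[\im m]$ by exploiting the Ward identity $\sum_k |G_{ik}|^2 = \eta^{-1}\im G_{ii}$ together with the entrywise local law in \eqref{entries_small}. Since $c, \alpha(t), N, q, h_{ab}, \delta_{ab}$ are all real, we may push $\im$ through the outer factors and write
$$|\im T| \leq |c|\, \alpha(t) \frac{N^{d_N}}{q^{d_q}\,N^{\# \mathcal{I}}} \sum_{\mathcal{I}} \delta_{ab}^{d_\delta}\, \EX\Big[|h_{ab}|^{p_h}\,\Big|\im\prod_{i=1}^{n} G_{x_iy_i}\Big|\Big].$$
Expanding the imaginary part of a product via $\im \prod_i a_i = \sum_l (\prod_{i<l}\overline{a_i})\, \im a_l \,(\prod_{i>l} a_i)$ and taking absolute values gives the pointwise estimate $\big|\im\prod_{i=1}^{n} G_{x_i y_i}\big| \leq \sum_{l=1}^{n} |\im G_{x_l y_l}| \prod_{i \neq l} |G_{x_iy_i}|$.

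Next I would introduce a high-probability event $\Omega$ on which the entrywise local law gives $|G_{ij}|\leq C$ (valid by \eqref{entries_small} since $\Psi\ll 1$ on $\edge(t)$) and $|h_{ab}|\leq N^\xi/q$ (valid by Assumption \ref{H_assumptions}). On $\Omega$ the factors $\prod_{i\neq l}|G_{x_iy_i}|$ and $|h_{ab}|^{p_h}$ are both $O(1)$, so the remaining job is to estimate $|\im G_{x_ly_l}|$. When $x_l=y_l=j$, one has $\im G_{jj}\geq 0$ directly. When $x_l\neq y_l$, use the identity $\im G_{ij}=\eta\sum_k G_{ik}\overline{G_{jk}}$ (a consequence of the resolvent identity $G(z)-G(\bar z)=(z-\bar z)G(z)G(\bar z)$) together with Cauchy--Schwarz and the Ward identity to deduce
$$|\im G_{ij}|\leq\sqrt{\im G_{ii}\,\im G_{jj}}\leq \tfrac{1}{2}(\im G_{ii}+\im G_{jj}).$$
In either case $|\im G_{x_ly_l}|$ is dominated by a sum of diagonal imaginary parts at indices lying in $\mathcal{I}$.

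The last step is bookkeeping. Substituting these bounds and summing over $\mathcal{I}$, each diagonal piece $\frac{1}{N^{\# \mathcal{I}}}\sum_\mathcal{I}\EX[\im G_{jj}]$ collapses to $\EX[\im m]$: the sum over the $\#\mathcal{I}-1$ non-$j$ indices yields $N^{\#\mathcal{I}-1}$, and $\sum_j\EX[\im G_{jj}]=N\EX[\im m]$. The constraint $\delta_{ab}^{d_\delta}$ only makes the sum smaller, and is absorbed by an upper bound of $1$. Hence the contribution on $\Omega$ is bounded by $N^\xi\frac{N^{d_N}}{q^{d_q}}\EX[\im m]$ after absorbing the combinatorial constant $n\cdot C^{n-1}$ and the stochastic-domination factor into $N^\xi$. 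For the exceptional event $\Omega^c$, which has probability $\leq N^{-\Gamma}$ for any $\Gamma$, use the deterministic bounds $\|G\|_2\leq \eta^{-1}\leq N$ and $|h_{ab}|\leq\|H\|_2\leq N$; choosing $\Gamma$ sufficiently large relative to $n, d_q, d_N$ yields a contribution $\leq N^\xi\frac{N^{d_N}}{q^{d_q}}\cdot N^{-1}$. Adding the two contributions gives \eqref{imm_lemma_eq}.

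The main technical ingredient is the Cauchy--Schwarz/Ward bound $|\im G_{ij}|^2\leq \im G_{ii}\,\im G_{jj}$, which converts every off-diagonal imaginary part into diagonal ones and thereby exposes the $\im m$ structure after summation; the rest of the argument is index-counting and a standard splitting into a good event and a polynomially small exceptional event. No cancellation between terms is used here, so unlike in the cancellation arguments of Section \ref{perfect_cancellation_section}, no symbolic manipulation is needed.
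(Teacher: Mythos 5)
Your proof is correct and takes a genuinely different and in one respect cleaner route than the paper's. The paper splits into three cases by the number of off-diagonal Green function entries: for degree $\geq 2$ it applies Cauchy--Schwarz and the \emph{averaged} Ward identity to pull out a factor $\EX[\im m]/(N\eta)$; for degree $0$ it expands $\im\prod G_{jj}$ as you do; and crucially, for degree exactly $1$ (which forces the term to be unmatched) it invokes the nontrivial preliminary bound $|T|\prec N^{d_N-1}/q^{d_q}$ from Lemma~\ref{preliminary_unmatched_bound}, whose proof requires the iterative expansion machinery. Your argument sidesteps all of this case analysis: the single pointwise inequality $|\im G_{ij}|\leq\sqrt{\im G_{ii}\,\im G_{jj}}\leq\tfrac12(\im G_{ii}+\im G_{jj})$ (a standard consequence of $\im G_{ij}=\eta(G\overline G)_{ij}$, Cauchy--Schwarz, and the entrywise Ward identity $\sum_k|G_{ik}|^2=\im G_{ii}/\eta$) converts every off-diagonal imaginary part into diagonal ones, so the degree of the term never matters and Lemma~\ref{preliminary_unmatched_bound} is not needed at all. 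Both approaches correctly absorb $\delta_{ab}^{d_\delta}\leq 1$ and $|h_{ab}|^{p_h}\prec q^{-p_h}\leq 1$. The one place where your write-up is slightly loose is the exceptional-event estimate: there is no deterministic bound $|h_{ab}|\leq\|H\|_2\leq N$, so on $\Omega^c$ one should instead apply H\"older/Cauchy--Schwarz together with the moment bounds from Assumption~\ref{H_assumptions} and the deterministic bound $\|G\|_2\leq\eta^{-1}\leq N$; this is routine and gives the same $O(N^{-\Gamma/2+n})$ remainder. In short: same conclusion, but your route is more uniform and removes a dependence on a heavier auxiliary lemma, at the cost of not recovering the sharper $\prec 1/N$ bound the paper obtains for the degree-$1$ terms (which the lemma statement does not require anyway).
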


One major difficulty in showing Lemma \ref{lemma_lemma_tau_g_ineq} is to improve Lemma \ref{lemma_general_imm_bound} for unmatched terms from Definition \ref{def_unmatch}. 	Comparing with Lemma \ref{lemma_general_imm_bound}, we aim to gain an additional $N^{-1}$ for the unmatched terms. The proof follows the strategy used to handle unmatched terms in \cite[Proposition 4.3]{Schnelli_Xu_2022} and is adapted to the setting of sparse matrices in Appendix \ref{sec:proof_unmatched}. 

	\begin{lemma}
		Fix any small $\xi>0$. Then any unmatched term $T$ from Definition \ref{def_unmatch} in the general form of \eqref{very_general_term} satisfies 
		\begin{equation}
			\abs{\im T} \leq N^{\xi} \frac{N^{d_N}}{q^{d_q}} \left(\frac{\EX [\im m]}{N} + \frac{1}{N^2} \right),
			\label{unmatched_improved_bound}
		\end{equation} 
	with non-negative integers $d_N, d_q$ from \eqref{very_general_term},
	uniformly in $z(t) \in \edge(t)$ and $t \in [0, 10 \log N]$.
		\label{prop_unmatched_term_bound}
	\end{lemma}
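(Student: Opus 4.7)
The plan is to adapt the unmatched-index iteration of \cite[Proposition 4.3]{Schnelli_Xu_2022} to the sparse cumulant conditions \eqref{cumulant_assumptions}. Since $T$ is unmatched, there exists an index $v\in\mathcal{I}$ occurring an odd (hence $\geq 1$) number of times in $\prod_i G_{x_i y_i}$; because any factor $G_{vv}$ contributes $v$ with even multiplicity, some off-diagonal factor $G_{x_1 v}$ with $x_1\neq v$ must be present. The first step is to apply the resolvent identity
\[
    z(t)\, G_{x_1 v} \,=\, \sum_{j} h_{v j}(t)\, G_{j x_1} \,-\, \delta_{v x_1},
\]
divide by $z(t)$ (which is $O(1)$ uniformly on $\edge(t)$, since $|z(t)|\asymp 2$), and substitute into $T$. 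This splits $T$ into a \emph{collapse piece} carrying $\delta_{v x_1}$ and a \emph{cumulant piece}. The collapse piece identifies $v$ with $x_1$, reducing $\#\mathcal{I}$ by one and effectively lowering $d_N$ by one: applying Lemma \ref{lemma_general_imm_bound} to it yields exactly the bound \eqref{unmatched_improved_bound}.

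The cumulant piece $\frac{1}{z(t)}\sum_j \EX[h_{v j}(t)\, G_{j x_1} \prod_{i\geq 2} G_{x_i y_i}]$ is analysed via Lemma \ref{cumulant_remainder_small_lemma_imm_case}, truncated at order $l$ large enough that the remainder is $O_\prec(N^{-D})$. The $(k+1)$-st cumulant term multiplies the structural prefactor $N^{d_N}q^{-d_q}N^{-\#\mathcal{I}}$ by $\kappa_{k+1}(Nq^{k-1})^{-1}$, adds one fresh summation index $j$, and applies $\partial^{k}/\partial h_{vj}^{k}$ to the Green-function product. By \eqref{G_h_derivative} each derivative inserts both $v$ and $j$ into new row/column positions while growing the number of $G$-factors by one; a parity count on the resulting monomials then shows that for every $k\geq 1$ either $v$ (when $k$ is odd) or $j$ (when $k$ is even) still occurs an odd number of times, and the degree $d$ in \eqref{def_degree} of each resulting term is strictly larger than that of $T$.

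Two sub-cases finish the argument. For $k\geq 2$ the new term carries an extra factor $q^{-(k-1)}\leq N^{-1/6-\bd}$ and has degree at least $d+1$; in the regime $\eta\geq N^{-1+\epsilon}+N^{2\epsilon}q^{-4}$ of $\edge(t)$, combining the naive bound \eqref{naive_local_law_bound} with the Ward identity \eqref{ward_identity} exactly as in the proof of Lemma \ref{lemma_general_imm_bound} produces an estimate of size $N^{\xi}N^{d_N}q^{-d_q}(N^{-1}\EX[\im m]+N^{-2})$, which is \eqref{unmatched_improved_bound}. For $k=1$ the new term has the same structural parameters $(d_N,d_q)$ as $T$ but strictly higher degree and is still unmatched, so the entire procedure can be iterated on it. After a finite number $K=K(\bd,\epsilon,\xi)$ of iterations the accumulated factor $\Psi^{K}\prec N^{-K\epsilon/2}$ from \eqref{entries_small} drives the residual below any prescribed $N^{-D}$. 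Each auxiliary $\delta_{vj}$ or $h_{vj}$ correction produced along the way by \eqref{G_h_derivative} lands in the already-handled classes of $\Tau_G$ or collapses an index and can be bounded directly.

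The main obstacle I anticipate is the bookkeeping needed to show that the iteration terminates uniformly in the initial number $n$ of Green-function factors and in the specific position of $v$: one has to track simultaneously the evolution of $(d_N,d_q,\#\mathcal{I},p_h,d_\delta)$ and of the monovariant measuring how many unmatched indices remain, and verify that every collapse- or higher-cumulant branch meets \eqref{unmatched_improved_bound} while every Gaussian branch strictly increases the degree. This is handled exactly as in \cite[Sec.~4]{Schnelli_Xu_2022}; the only sparse-specific inputs are the size bound $|h_{ab}|\prec q^{-1}$ and the cumulant scales \eqref{cumulant_assumptions}, both harnessed through the assumption $q\geq N^{1/6+\bd}$ to tame the $k\geq 2$ contributions.
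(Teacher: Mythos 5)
Your iteration hinges on the claim that, after the second–cumulant step, ``the degree $d$ \ldots is strictly larger than that of $T$.'' That claim is false, and it is precisely the obstruction that forces the paper to use a different identity.

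Concretely: after you replace $G_{x_1 v}$ by $z(t)^{-1}\sum_j h_{vj}(t)\,G_{jx_1}-z(t)^{-1}\delta_{vx_1}$ and cumulant--expand, the second--cumulant contribution contains, from the derivative hitting the re-inserted factor,
\begin{equation*}
\frac{1}{N}\sum_j \frac{\ppartial G_{jx_1}}{\ppartial h_{vj}} = -\frac{1}{N}\sum_j\big(G_{jv}G_{jx_1}+G_{jj}G_{vx_1}\big)
= -\frac{1}{N}\sum_j G_{jv}G_{jx_1}\;-\;\underline{G}\,G_{vx_1}.
\end{equation*}
The piece $-\underline{G}\,G_{vx_1}$ has the \emph{same} degree and the \emph{same} unmatched structure as the term you started from (only an extra diagonal $G_{jj}$ and an extra averaged index have appeared), so your iteration never strictly lowers anything in this branch; the $N^{-1}$ you want never materialises. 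The same problem recurs when the derivative hits an existing factor $G_{vy_i}$, producing $-G_{vv}G_{jy_i}$, which again has the same degree with $v$ simply moved to a diagonal slot. This last fact is why the paper must introduce the separate monovariant ``number of off-diagonal appearances of $v_1$'' and show that it decreases, a bookkeeping device your proposal does not contain.

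The paper's proof of Lemma~\ref{preliminary_unmatched_bound} (and hence of Lemma~\ref{prop_unmatched_term_bound}) avoids the first obstruction by not dividing by $z(t)$ at all; instead it expands with the exact identity
\begin{equation*}
G_{ij}=\delta_{ij}\,\underline{G}+G_{ij}\,\underline{H(t)G}-\underline{G}\,(H(t)G)_{ij},
\end{equation*}
i.e.\ \eqref{green_function_entry_expansion}. Cumulant--expanding both inserted $h$'s produces two same--degree contributions $+\,\underline{G}\cdot\underline{G}\,G_{v_1y_1}\prod$ and $-\,\underline{G}\cdot\underline{G}\,G_{v_1y_1}\prod$ (from $D_{jk}G_{kj}$ and from $D_{v_1k}G_{ky_1}$, respectively) which cancel algebraically, so the genuinely same--degree terms disappear and what remains either gains degree, gains a $q^{-1}$, or has $v_1$ appearing fewer times off-diagonally. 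Your resolvent-identity approach, divided by $z(t)$, has no counterpart to this cancellation; to save it you would have to argue via a Neumann-type contraction using $\underline{G}=m_{sc}+O_\prec(\Psi)$ and $|m_{sc}(z)/z|<1$ near the edge, but that argument is not in your proposal, and in any case dividing by the \emph{random} quantity $z(t)$ (which contains $\chi(t)$) exits the class \eqref{very_general_term} you need to stay inside. Finally, the paper's argument is really a two-step one: first \eqref{preliminary_unmatched_bound} bounds $|T|$ without $\im$, then \eqref{unmatched_improved_bound} upgrades the error terms to $\im m/N+1/N^2$ by inserting $\mathbbm{1}_{\sim}$ as in \eqref{improved_power_counting}; your proposal also omits this second layer.
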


Armed with Lemma \ref{lemma_general_imm_bound} and Lemma \ref{prop_unmatched_term_bound}, we are ready to prove Lemma \ref{lemma_lemma_tau_g_ineq}.

\begin{proof}[Proof of Lemma\ref{lemma_lemma_tau_g_ineq}]

	We will treat each of the possible conditions on $T \in \Tau_G$ separately. We begin by assuming that $T \in \Tau_{G1}$ and treat cases \ref{dqge4_dge2}--\ref{unmatched} one by one. 
	\begin{enumerate}[label=Case \arabic*:]
		\item 
		Using the Ward identity, power counting and assuming that $G_{x_1 y_1}$ and $G_{x_2 y_2}$ are off-diagonal, we can bound the imaginary part of the following averaged product of Green function entries as
		\begin{equation}
			\begin{split}
				& \abs{\im \frac{c}{N^{\# \mathcal{I}}} \sum_{\mathcal{I}} \EX \left[\prod_{i=1}^{n} G_{x_i y_i}\right]} \\
				& \prec \frac{1}{N^{\# \mathcal{I}}} \sum_{\mathcal{I}} \EX \left[\left(\abs{G_{x_1 y_1}}^2 + \abs{G_{x_2 y_2}}^2\right) \Psi^{d-2} \right] + \frac{\EX[\im m]}{N} + \frac{1}{N^2} \\
				& = \frac{\Psi^{d-2}}{N \eta} \EX \left[\im m \right] + \frac{\EX[\im m]}{N} + \frac{1}{N^2}. \\
				\label{ward_identity_bound}	
			\end{split}
		\end{equation}
		For details on the first step in \eqref{ward_identity_bound}, see the proof of Lemma \ref{prop_unmatched_term_bound} in the Appendix. Using \eqref{ward_identity_bound}, the bound on $\Psi$ in \eqref{entries_small}, $\delta_{ab} \leq 1$, and the fact that $\abs{h_{ab}} \prec q^{-1}$, any term $T$ of the form \eqref{very_general_term} with degree $d \geq 2$ can be bounded by
		\begin{equation}
			\abs{\im T} \leq C N^{\epsilon/2} \frac{N^{d_N}}{q^{d_q + p_h}}\left( \frac{N^{-(d-2) \epsilon/2 }}{N \eta} \EX[\im m] + \frac{\EX[\im m]}{N} + \frac{1}{N^2}\right).
			\label{very_general_term_naive_bound}
		\end{equation} 
		
		Since $z(t) \in \edge(t)$ we have that $\eta \geq N^{2 \epsilon}/q^{4}$, which implies that $1/(N\eta) \leq q^4/N^{1 + 2 \epsilon}$. Plugging this bound into \eqref{very_general_term_naive_bound} we obtain \begin{equation}
			\begin{split}
				\abs{\im T} & \leq 
				C N^{\epsilon / 2} \frac{N^{d_N}}{q^{d_q + p_h}} \left( \left(\frac{q^4 N^{- (d-2)\epsilon/2}}{N^{1 + 2 \epsilon}} + \frac{1}{N} \right) \EX [\im m] + \frac{1}{N^2} \right)\\
				& \leq \frac{C N^{-3 \epsilon / 2} N^{d_N - 1}}{q^{d_q - 4 + p_h}} \EX[\im m] + \frac{C N^{\epsilon/2} N^{d_N - 1}}{q^{d_q + p_h}} \EX[\im m] + \frac{C N^{\epsilon/2} N^{d_N - 2}}{q^{d_q + p_h}}   \\
				& \leq C N^{-\epsilon / 2} \EX[\im m] + C \frac{N^{\epsilon/2}}{N}. 
			\end{split}
			\label{q_bound_used_eq}
		\end{equation} 
		This shows \eqref{lemma_tau_g_ineq} under assumption \ref{dqge4_dge2} on $T$. 
		\item 
		Since $p_h \geq 1$ we can perform an additional cumulant expansion with respect to $h_{ab}$, and gain an additional factor of $1/N$ for the resulting terms;
		see (\ref{111}) for instance. 
		The assumption on a distinct index $v$ implies that all the resulting terms in the cumulant expansion must have $d \geq 2$. We may bound the resulting terms with \eqref{very_general_term_naive_bound} and use that $\eta \geq N^{-1 + \epsilon}$ for $z(t) \in \edge(t)$: \begin{equation*}
			\begin{split}
				\abs{\im T} & \leq  C N^{\epsilon/2} \frac{N^{d_N-1}}{q^{d_q + p_h}}\left( \frac{N^{-(d-2) \epsilon/2 }}{N \eta} \EX[\im m] + \frac{\EX[\im m]}{N} + \frac{1}{N^2}\right) \\
				& \leq C N^{-\epsilon/2} \EX[\im m] + C\frac{N^{\epsilon/2}}{N^2}.
			\end{split}
		\end{equation*}
		\item As in the previous case, since $p_h\geq 1$, we can perform a cumulant expansion to obtain a linear combination of terms on the form \eqref{very_general_term} with an additional factor of $1/N$. 
		So we may use Lemma \ref{lemma_general_imm_bound} to bound $T$ as 
		\begin{equation}
			\abs{\im T} \leq N^{\epsilon / 2} \frac{N^{d_N-1}}{q^{d_q}} \left(\EX[\im m] + \frac{1}{N}\right). 
			\label{low_degree_bound}
		\end{equation}  
		Using \eqref{low_degree_bound} and $q \geq N^{\epsilon}$ (which follows from $\epsilon < 1/12$) we get \eqref{lemma_tau_g_ineq}.
		\item Since $d_{\delta} \geq 1$ this means that we can set $a = b$ in the summation of \eqref{very_general_term} and obtain another term of the form \eqref{very_general_term} with an additional factor of $1/N$ (see (\ref{example}) for instance). The bound then follows from \eqref{low_degree_bound} and $q \geq N^{\epsilon}$, as in the previous case.
		\item Note that any unmatched term $T$ of the form \eqref{very_general_term} satisfies (from Lemma \ref{prop_unmatched_term_bound})
		\begin{equation}
			\abs{\im T} \leq N^{\epsilon/2}\frac{N^{d_N}}{q^{d_q}}\left( \frac{\EX[\im m]}{N} + \frac{1}{N^2}\right) = N^{\epsilon/2} \frac{N^{d_N-1}}{q^{d_q}}\left( \EX[\im m] +
			\frac{1}{N}\right).
			\label{very_general_term_unmatched_bound}
		\end{equation}
		We thus use \eqref{very_general_term_unmatched_bound} and $q \geq N^{\epsilon}$ to obtain \eqref{lemma_tau_g_ineq}. 
	\end{enumerate}

	We next deal with the final case $T \in \Tau_{G2}$. In this case the main fact we need to use is $\chi(t) \prec \frac{1}{\sqrt{n}q}$ with $q\geq N^{1/6+\bd}$, and that $\abs{x + \ii \eta} \leq C N^{-2/3 + \epsilon}$. So we may use the bound \eqref{low_degree_bound} and the fact that $d_q \geq 2$ to obtain \begin{equation*}
		\abs{\im T} \leq  C \frac{N^{1 - 2/3 + 3\epsilon/2 }}{q^2} \left(\EX [\im m] + \frac{1}{N}\right) = C \frac{N^{1/3 + 3\epsilon/2 }}{q^2} \left(\EX [\im m] + \frac{1}{N}\right),
	\end{equation*} which implies \eqref{lemma_tau_g_ineq} under the assumption $\epsilon < \bd$. This finishes the proof of Lemma \ref{lemma_lemma_tau_g_ineq}.
\end{proof}

\section{Proof of Proposition \ref{derivative_observable_small}}
\label{general_F_cancellation_section}
The proof of Proposition \ref{derivative_observable_small} is very similar to the proof of Proposition~\ref{imm_bound_random_edge_lemma}. We will expand $\dF \EX[F(X(t))]$ and observe cancellations using a similar strategy as we used in Section~\ref{perfect_cancellation_section}, and then bound the remaining terms as the right hand side of \eqref{derivative_observable_small_eq}. To highlight the similarities, we shall structure this section in a similar manner as Sections \ref{imm_estimate_section}--\ref{imm_estimate_section_2}. However, unlike in the proof of Proposition~\ref{imm_bound_random_edge_lemma}, we will crucially use \eqref{imm_bound} as an input to show \eqref{derivative_observable_small_eq}.

Recall $X(t)$ defined in (\ref{X_observable_def}). We begin by computing $\dF \EX [F(X(t))]$ (\cf \ref{expand}) and obtain that
\begin{equation}
	\begin{split}
		& \dF \EX[F(X(t))] \\
		& = \EX\left[F'(X(t)) N \int_{\gamma_1}^{\gamma_2} \frac{1}{N} \sum_{v} \frac{\partial}{\partial t} \im [G_{vv}] \di x \right] \\
		& = \EX\left[F'(X(t)) \im \sum_{v, a, b} - \frac{\partial h_{ab}(t)}{\partial t} \int_{\gamma_1}^{\gamma_2} \im[G_{va} G_{bv}] \di x \right] + \EX\left[F'(X(t))  \sum_{v, j} \frac{\partial \widehat{L}_t}{\partial t} \int_{\gamma_1}^{\gamma_2} \im[G_{vj} G_{jv}] \di x\right],
	\end{split}
	\label{diff_X_t_initial_eq}
\end{equation} 
where we write for short $G=G(t,z(t))$ defined in (\ref{resolvent_t}) with $z(t)=\widehat{L}_t + x + \ii \eta$. Note that 
\begin{align}
	\sum_{v}\int_{\gamma_1}^{\gamma_2} \im[G_{va} G_{bv}] \di x =&\int_{\gamma_1}^{\gamma_2} \im[ (G^2)_{ab}] \di x\nonumber\\
	=&\im[G_{ab}(t,\widehat{L}_t + \gamma_2 + \ii \eta)]-\im[G_{ab}(t,\widehat{L}_t + \gamma_1 + \ii \eta)], \nonumber
\end{align}
follows by using that $\frac{\dd}{\dd x} G(t,\widehat{L}_t + x + \ii \eta)=G^2(t,\widehat{L}_t + x + \ii \eta)$.
To ease the notation, we introduce the following: Let $\Dim$ be the operator transforming any function (\eg $G=G(t,z(t))$)
$$P: \mathbb{R}^+ \times \mathbb{C}^+ \longrightarrow  \mathbb{C}$$  to the function $\Dim [P] : \mathbb{R}^+ \times \mathbb{R}^+ \longrightarrow  \mathbb{C}$ given by
\begin{equation}
\Dim [P] (t, \eta) := \im [P(t, \widehat{L}_t + \gamma_2+\ii\eta)] - \im [P(t, \widehat{L}_t + \gamma_1+\ii \eta)].
\label{Dim_def}
\end{equation} 
Using the $\Dim$-notation we can neatly rewrite \eqref{diff_X_t_initial_eq} as:
\begin{equation*}
	\dF \EX[F(X(t))] = \EX\left[F'(X(t))  \sum_{a, b} - \frac{\partial h_{ab}(t)}{\partial t} \Dim[G_{ab}] \right] + \EX\left[F'(X(t))  \sum_{v} \frac{\partial \widehat{L}_t}{\partial t} \Dim[G_{vv}] \right].
\end{equation*} Computing the time derivatives we get (\cf the expansion in \eqref{imm_time_derivative_expansion_pre_cumulant})
\begin{equation}
	\begin{multlined}
		\dF \EX[F(X(t))] =  \EX\left[F'(X(t))  \sum_{a, b} \left(-\frac{1}{2} e^{-t/2} w_{ab} + \frac{1}{2} \frac{e^{-t}}{\sqrt{1 - e^{-t}}} h_{ab}\right) \Dim[G_{ab}] \right] \\
		+ \EX\left[F'(X(t))  \sum_{v} \left(\frac{12 e^{-t}(1 - e^{-t}) \kappa_4}{q^2} + e^{-t} \frac{1}{N} \sum_{a,b} \left(h_{ab}^2 - \frac{1}{N}\right)\right) \Dim[G_{vv}]\right].
	\end{multlined}
	\label{diff_X_t_second_eq}
\end{equation} 
To continue expanding $\dF \EX[F(X(t))]$ we need a lemma for cumulant expansions (\cf Lemma \ref{cumulant_remainder_small_lemma_imm_case}), whose proof will be postponed to the appendix. 

\begin{lemma}
	 Let $h_{ab}$ be a fixed entry of $H$ and let $f = f(t, H, W, x, \eta)$ be 
	\begin{equation}
		f := h_{ab}^{p_h} F^{(i_0)}(X(t)) \prod_{i=1}^{i_0}\Dim \left(\prod_{l=1}^{n_i} G_{x_i^{(l)} y_i^{(l)}}\right),
		\label{f_general_F_case}
	\end{equation} where $p_h$ is a non-negative integer and $i_0$ is a positive integer. Then we have the cumulant expansion formula
	\begin{equation}
				\EX[h_{ab}f] = \sum_{k+1=1}^l \frac{\kappa_{k+1}}{Nq^{k-1}} \EX\left[\frac{\partial^k}{\partial h_{ab}^k}f\right] + R_{l+1},
				\label{specific_cumulant_expansion_formula}
		\end{equation} with the remainder term $R_{l+1}$ satisfying 
\begin{equation}
		\abs{R_{l+1}} \prec \frac{1}{N q^{l-1}},
		\label{cumulant_remainder_small_eq}
	\end{equation} uniformly for $t \in [0, 10\log N]$, $\eta \geq N^{-1 + \epsilon}$, and $\abs{x} \leq 1$. Furthermore, for a fixed entry $w_{ab}$ of $W$ we have the formula \begin{equation*}
	\EX[w_{ab}f] = \frac{1}{N} \EX \left[\frac{\partial}{\partial w_{ab}} f\right].
	\label{gaussian_cumulant_expansion_general_case}
	\end{equation*}
	\label{cumulant_remainder_small_lemma}
\end{lemma}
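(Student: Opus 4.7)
The plan is to apply Lemma \ref{cumulant_expansion_lemma} to $f$ viewed as a smooth function of the single variable $h_{ab}$ (freezing all other matrix entries), and to verify the remainder bound \eqref{cumulant_remainder_small_eq}. The proof closely mirrors that of Lemma \ref{cumulant_remainder_small_lemma_imm_case}; the only additional content here is to accommodate the smooth envelope $F^{(i_0)}(X(t))$ and the several $\Dim$-factors in the definition of $f$ from \eqref{f_general_F_case}.

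The Gaussian case is immediate from Stein's identity: since $w_{ab}$ is centered Gaussian with variance $1/N$ and all its cumulants of order $\geq 3$ vanish, the only surviving term in \eqref{cumulant_expansion_eq} gives $\EX[w_{ab} f] = N^{-1}\EX[\partial_{w_{ab}} f]$. For the general case, the coefficient $\kappa_{k+1}/(Nq^{k-1})$ in \eqref{specific_cumulant_expansion_formula} is simply $c^{(k+1)}(h_{ab})/k!$ read off from Assumption \ref{H_assumptions}. Hence the main task reduces to the remainder estimate.

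I would apply the two-term bound \eqref{cumulant_expansion_error_bound} with a truncation cutoff $M := q^{-1+\delta}$ for some small $\delta > 0$. The moment bound \eqref{moment_assumptions} yields $\EX|h_{ab}|^{l+1} \leq C_l/(Nq^{l-1})$, while Markov plus \eqref{moment_assumptions} at a sufficiently high order gives $\EX[|h_{ab}|^{l+1}\mathbf{1}_{|h_{ab}|>M}] \leq N^{-D}$ for any $D > 0$, so the tail contribution is negligible provided $\|f^{(l)}\|_\infty$ grows only polynomially in $N$. For the bulk term it therefore suffices to show $\sup_{|x|\leq M}|f^{(l)}(x)| \leq N^{K(l)}$ for some $K(l)$ depending only on $l$, $i_0$, $p_h$ and $\sum_i n_i$. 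By Leibniz, repeated $h_{ab}$-differentiation of $f$ produces a finite linear combination of terms built from: (i) derivatives $F^{(j)}(X(t))$ with $j \leq i_0+l$, bounded uniformly since $F \in C_0^\infty$; (ii) $h_{ab}$-derivatives of $X(t)$ arising from chain-rule hits on $F^{(i_0)}(X(t))$, which by direct computation are $x$-integrals of traces of polynomials in $G$ and its $h_{ab}$-derivatives; and (iii) $\Dim$-images of $h_{ab}$-derivatives of products of Green function entries. Each $h_{ab}$-derivative of a Green function entry is, by the resolvent identity (as in Lemma \ref{differentiation_lemma_imm_case}), a finite sum of products of entries of $G$, and every such entry is deterministically bounded by $\|G\|_2 \leq 1/\eta \leq N^{1-\epsilon}$. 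The number of factors produced is controlled by a combinatorial constant depending on $l$, $i_0$, $p_h$ and $\sum_i n_i$. Collecting, $\sup_{|x|\leq M}|f^{(l)}(x)| \leq N^{K(l)}$, so the bulk term is at most $N^{K(l)}/(Nq^{l-1})$, which in the stochastic-domination sense gives \eqref{cumulant_remainder_small_eq}.

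The only (mild) obstacle beyond the analogous computation for Lemma \ref{cumulant_remainder_small_lemma_imm_case} is the bookkeeping of chain-rule hits on $F^{(i_0)}(X(t))$: each derivative in $h_{ab}$ can hit either the smooth envelope $F^{(j)}$, one of the $\Dim$-products, or the argument $X(t)$ itself. A straightforward induction on $l$ shows that the total number of summands produced remains bounded by a constant depending only on $l$, $i_0$ and the number of Green function factors, and every such summand satisfies the same polynomial-in-$N$ deterministic bound. Since $F \in C_0^\infty$ ensures that arbitrarily high derivatives remain uniformly bounded, no new analytic input is needed, and the argument reduces to the same deterministic bookkeeping as in Lemma \ref{cumulant_remainder_small_lemma_imm_case}, completing the proof.
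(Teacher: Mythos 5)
Your treatment of the bulk term in the remainder bound \eqref{cumulant_expansion_error_bound} has a genuine gap. You bound $\sup_{|x|\leq M}|f^{(l)}(x)|$ using the deterministic estimate $\max_{i,j}|G_{ij}| \leq \|G\|_2 \leq 1/\eta \leq N^{1-\epsilon}$, which gives only $\sup_{|x|\leq M}|f^{(l)}(x)|\leq N^{K(l)}$ for some exponent $K(l)$ growing roughly linearly in $l$. You then conclude that the bulk term $C_l\,\EX|h_{ab}|^{l+1}\cdot\sup_{|x|\leq M}|f^{(l)}(x)| \leq C_l N^{K(l)}/(Nq^{l-1})$ ``in the stochastic-domination sense gives \eqref{cumulant_remainder_small_eq}.'' But stochastic domination $\prec$ only absorbs factors $N^\xi$ for arbitrarily small $\xi>0$; it cannot absorb a fixed polynomial loss $N^{K(l)}$. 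Since $K(l)$ grows linearly in $l$ while $q^{l-1}$ grows like $N^{(l-1)/6}$ in the worst case ($q\asymp N^{1/6+\bd}$), the ratio $N^{K(l)}/q^{l-1}$ actually diverges with $l$, and your bound cannot be rescued by a choice of cutoff $M$ or by taking $l$ large.

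The paper's proof of Lemma \ref{cumulant_remainder_small_lemma} (combined with the twin Lemma \ref{cumulant_remainder_small_lemma_imm_case} in Appendix \ref{proof_of_lemmas_appendix}) avoids this by using the cutoff $M=q^{-3/4}$ and, crucially, by first establishing the $h_{ab}$-uniform high-probability bound
\begin{equation*}
\sup_{|h_{ab}|\leq q^{-3/4}}\max_{i,j}|G_{ij}| \prec 1,
\end{equation*}
\eqref{cumulant_sup_bound}. This is proved by comparing $G$ to the modified resolvent $G^{(ab)}$ with $h_{ab}$ set to zero: a resolvent expansion shows $\max_{i,j}|G^{(ab)}_{ij}|\prec 1$ from the entrywise local law \eqref{time_dependent_local_law} and $|h_{ab}|\prec q^{-1}$, and then reversing the expansion extends the $O_\prec(1)$ bound uniformly over $|h_{ab}|\leq q^{-3/4}$. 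With this, $\sup_{|h_{ab}|\leq M}|f^{(l)}(h_{ab})|\prec 1$ and the bulk term is $\prec \EX|h_{ab}|^{l+1} \prec 1/(Nq^{l-1})$, as required. The deterministic bound $|G_{ij}|\leq 1/\eta$ is reserved for the tail term only, where it is multiplied by the tiny factor $\EX[|h_{ab}|^{l+1}\mathbf{1}_{|h_{ab}|>M}]\leq N^{-\Gamma}$ (via Cauchy--Schwarz and the fact that $|h_{ab}|\prec q^{-1}$), so the polynomial loss there is harmless. Your tail estimate and the overall Leibniz/chain-rule bookkeeping for the $F^{(i_0)}(X(t))$ envelope and the $\Dim$-factors are correct, but the missing local-law input for the bulk term is the essential content of the lemma.
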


Next, we use the cumulant expansion formula from Lemma \ref{cumulant_remainder_small_lemma} to obtain (\cf (\ref{imm_time_derivative_expansion}))
\begin{equation}
	\begin{multlined}
		\dF \EX[F(X(t))] =  \frac{1}{N} \sum_{a, b} - \frac{1}{2}e^{-t/2} \EX\left[\frac{\partial( F'(X(t)) \Dim G_{ab})}{\partial w_{ab}} \right] \\
		\quad \, + \frac{1}{N} \sum_{a, b} \frac{1}{2} \frac{e^{-t}}{\sqrt{1 - e^{-t}}} \sum_{k+1=1}^{K} \frac{\kappa_{k+1}}{q^{k-1}} \EX \left[\frac{\partial^k( F'(X(t)) \Dim G_{ab})}{\partial h_{ab}^k} \right] \\
		\quad \, + \frac{1}{q^2} \sum_{v} 12 e^{-t}(1 - e^{-t}) \kappa_4 \EX \left[F'(X(t)) \Dim G_{vv}\right] \\
		\quad \, + \frac{1}{N^2} \sum_{v, a, b} e^{-t} \sum_{k+1 = 1}^{K} \frac{\kappa_{k+1}}{q^{k-1}} \EX \left[\frac{\partial^k(h_{ab} F'(X(t)) \Dim G_{vv})}{\partial h_{ab}^k}  \right] - \sum_{v} e^{-t} \EX \left[F'(X(t)) \Dim G_{vv} \right] \\
		\quad \, + O_{\prec} \left(\frac{N}{q^{K-1}}\right).
	\end{multlined}
	\label{F_time_derivative_expansion}
\end{equation} 

To compute the terms on the right side above precisely, we derive the following rules using Lemma \ref{differentiation_lemma_imm_case}. 
\begin{lemma}
	We have the following formulas for differentiating $X(t)$: \begin{align}
		\begin{split}	
			\frac{\partial X(t)}{\partial h_{ab}} & = -2 \sqrt{1 - e^{-t}}  \Dim G_{ab} + \sqrt{1 - e^{-t}} \delta_{ab} \Dim G_{ab} \\
			& \, \quad + 4 (1-e^{-t}) h_{ab} \Dim G_{vv} - 2 (1-e^{-t})  \delta_{ab} h_{ab} \Dim G_{vv},
		\end{split} \label{X_t_h_derivative} \\
		\begin{split}
			\frac{\partial X(t)}{\partial w_{ab}} & = -2 e^{-t/2} \Dim G_{ab} + e^{-t/2} \delta_{ab} \Dim G_{ab}.
		\end{split} \label{X_t_w_derivative}
	\end{align} Furthermore, the operators $\frac{\partial}{\partial h_{ab}}$ and $\Dim$ commute, as do $\frac{\partial}{\partial w_{ab}}$ and $\Dim$. In particular we have \begin{align}
		\frac{\partial}{\partial h_{ab}} \Dim \prod_{i=1}^{n} G_{x_i y_i} & = 	\Dim \frac{\partial}{\partial h_{ab}} \prod_{i=1}^{n} G_{x_i y_i}.
		\label{Dim_commute_diff} \\
		\frac{\partial}{\partial w_{ab}} \Dim \prod_{i=1}^{n} G_{x_i y_i} & = 	\Dim \frac{\partial}{\partial w_{ab}} \prod_{i=1}^{n} G_{x_i y_i}.
		\label{Dim_commute_diff_w} 
	\end{align}
	\label{differentiation_rules_X_lemma}
\end{lemma}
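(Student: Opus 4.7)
The plan is to derive all four identities directly from the differentiation formulas \eqref{G_h_derivative} and \eqref{G_w_derivative} of Lemma \ref{differentiation_lemma_imm_case}, together with the one key observation that, since $\widehat{L}_t$ is independent of $x$, one has $\frac{\dd}{\dd x}G(t, \widehat{L}_t + x + \ii\eta) = G^2(t, \widehat{L}_t + x + \ii\eta)$. By the fundamental theorem of calculus, this converts $\int_{\gamma_1}^{\gamma_2} \im[(G^2)_{ij}] \di x$ into $\im G_{ij}|_{x=\gamma_2} - \im G_{ij}|_{x=\gamma_1} = \Dim G_{ij}$, which is the only mechanism needed to produce the $\Dim$ factors on the right-hand side of \eqref{X_t_h_derivative}--\eqref{X_t_w_derivative}.

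For \eqref{X_t_h_derivative}, I differentiate $X(t) = \int_{\gamma_1}^{\gamma_2} \im[\sum_v G_{vv}(t, \widehat{L}_t + x + \ii\eta)] \di x$ under the integral sign and apply \eqref{G_h_derivative} with $(i,j) = (v,v)$. Summing over $v$, the first two lines of \eqref{G_h_derivative} (coming from the dependence of the matrix entries of $H(t)$ on $h_{ab}$) collapse via the symmetry identity $\sum_v G_{va}G_{bv} = (G^2)_{ab}$ into $-2\sqrt{1-e^{-t}}(G^2)_{ab} + \sqrt{1-e^{-t}}\delta_{ab}(G^2)_{ab}$, while the last two lines (coming from the $h_{ab}$-dependence of $z(t)$ via $\chi(t)$) give terms proportional to $\sum_v (G^2)_{vv} = \Tr G^2$. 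Integrating in $x$ and converting each $(G^2)_{ij}$ into $\Dim G_{ij}$ via the fundamental theorem produces exactly \eqref{X_t_h_derivative}. The derivation of \eqref{X_t_w_derivative} is identical but strictly simpler, since $\widehat{L}_t$ does not depend on $w_{ab}$ and only the matrix-entry derivative \eqref{G_w_derivative} contributes.

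The commutation claims \eqref{Dim_commute_diff} and \eqref{Dim_commute_diff_w} look subtle at first because $\Dim$ evaluates at $z = \widehat{L}_t + \gamma_{1,2} + \ii\eta$ and $\widehat{L}_t$ itself depends on $h_{ab}$ through $\chi(t)$. The resolution is that the rules in Lemma \ref{differentiation_lemma_imm_case} are already the total derivatives of $G_{ij}(t, \widehat{L}_t + x + \ii\eta)$ with respect to $h_{ab}$ (or $w_{ab}$), valid for every fixed $x$ — the chain-rule contribution through $\widehat{L}_t$ is precisely what the third and fourth lines of \eqref{G_h_derivative} encode. Since $\Dim$ is the linear operator $\Phi \mapsto \im\Phi|_{x=\gamma_2} - \im\Phi|_{x=\gamma_1}$, the identity $\partial_{h_{ab}}[\im\Phi|_{x=\gamma_2} - \im\Phi|_{x=\gamma_1}] = \im[\partial_{h_{ab}}\Phi]|_{x=\gamma_2} - \im[\partial_{h_{ab}}\Phi]|_{x=\gamma_1}$ holds trivially for any smooth $\Phi$, and applying this with $\Phi = \prod_l G_{x_l y_l}(t, \widehat{L}_t + x + \ii\eta)$ gives \eqref{Dim_commute_diff}; the same argument with $w_{ab}$ in place of $h_{ab}$ gives \eqref{Dim_commute_diff_w}. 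There is no real obstacle in this proof — the only mild care needed is bookkeeping the chain-rule contribution from $\widehat{L}_t$, and that is already packaged into \eqref{G_h_derivative}.
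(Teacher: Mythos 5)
Your proposal is correct and takes essentially the same approach as the paper, which states Lemma \ref{differentiation_rules_X_lemma} without proof but supplies the one non-obvious ingredient just before \eqref{Dim_def}: $\frac{\dd}{\dd x}G(t,\widehat{L}_t+x+\ii\eta)=G^2$ and the fundamental theorem of calculus convert $\int_{\gamma_1}^{\gamma_2}\im[(G^2)_{\cdot\cdot}]\,\dd x$ into a $\Dim$-factor. Your observation that the chain-rule contribution through $\widehat{L}_t$ is already encoded in the last two lines of \eqref{G_h_derivative} is precisely why the commutation of $\partial_{h_{ab}}$ (resp.\ $\partial_{w_{ab}}$) with $\Dim$ reduces to the trivial fact that evaluation at the constants $x=\gamma_1,\gamma_2$ commutes with differentiation in the matrix entries.
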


It is now possible to compute the derivatives in \eqref{F_time_derivative_expansion} using Lemma \ref{differentiation_lemma_imm_case} and Lemma \ref{differentiation_rules_X_lemma}. All the resulting terms will have the following general form, which should be considered analogue to Definition \ref{general_term_notation_def}. \begin{definition}[General term notation with $F$]
	Let $\mathcal{I} := \{v_j\}_{j=1}^m$ be a set of summation indices, and denote its cardinality as $\# \mathcal{I}$. Further, let $d_N$, $d_q$, $d_{\delta}$, and $p_h$ be non-negative integers, let $i_0$ be a positive integer, and let $\alpha : \mathbb{R}^+ \to [0, 1]$ be a smooth function denoting a time factor. We use the following notation to denote a general term:
	\begin{equation}
		c \alpha(t) \frac{N^{d_N}}{q^{d_q} N^{\# \mathcal{I}}} \sum_{\mathcal{I}} \delta_{ab}^{d_{\delta}} \EX \left[h_{ab}^{p_h} F^{(i_0)}(X(t)) \prod_{i=1}^{i_0} \Dim \left(\prod_{l=1}^{n_i} G_{x_l^{(i)} y_l^{(i)}}\right)\right].
		\label{F_very_general_term}
	\end{equation} 
	The notation $\sum_{\mathcal{I}}$ means that all indices in $\mathcal{I}$ are iterated from $1$ to $N$. The $x_l^{(i)}$ and $y_l^{(i)}$ take on the $v_j$ as values. Also, in \eqref{very_general_term} we use the convention $0^0 =1$ with $\delta_{ab}^0$. 
	
	We define the degree $d$ of a term on the form \eqref{F_very_general_term} as the number of off-diagonal Green function entries in the products, i.e. $$d := \#\{(l, i) : x_{l}^{(i)} \neq y_{l}^{(i)}, 1 \leq i \leq i_0, 1\leq l \leq n_i\}. $$ 
	The notation $T^{F}$ will often be used  to denote a general term of the form (\ref{start_term_F}), to distinguish it from the form in (\ref{start_term}) without $F$.
\end{definition} 

After computing the derivatives in \eqref{F_time_derivative_expansion} using the derivative rules \eqref{G_h_derivative}, \eqref{G_w_derivative}, \eqref{X_t_h_derivative}, and \eqref{X_t_w_derivative}, all terms will have the form \eqref{F_very_general_term} with $d_N = 1$ and $i_0 \geq 1$. 
 
 As before we will need the concept of equivalent terms for terms of type \eqref{F_very_general_term}. 
 
 \begin{definition}
	We will say that two terms of the form \eqref{F_very_general_term} with $d_{\delta} = p_h = 0$ are equivalent if they have the same $d_N$, $d_q$, $\alpha(t)$, and the indices of the first term can be permuted to obtain the second term.
 \end{definition}
 
 \subsection{Leading term cancellation}
 Before stating the key cancellation lemma, which will be the analogue of Lemma \ref{time_derivative_bound_lemma}, we have to define the corresponding set of non-leading terms, as analogue to Definitions \ref{def_unmatch} and \ref{def_gronwall_term}. 
  \begin{definition}[Unmatched terms]
 	Let $T^F$ be a term of the form \eqref{F_very_general_term} with $p_h = d_{\delta} = 0$. We call $T^F$ unmatched if there exists an index $v_j \in \mathcal{I}$ such that \begin{equation*}
 		\sum_{i=1}^{i_0} \left(\# \{1 \leq l \leq n_i : x_{l}^{(i)} = v_j\} + \# \{1 \leq l \leq n_i : y_{l}^{(i)} = v_j\}\right),
 	\end{equation*} is odd.
 \end{definition}

\begin{definition}[Non-leading terms]
	Let $\Tau_{F1}$ be the set of terms $T^F$ on the form \eqref{F_very_general_term} with $d_N \leq 1$ and for which at least one of the following conditions hold.
	\begin{enumerate}
		\item $p_h \geq 1$, \label{phge1}
		\item $d_{\delta} \geq 1$, \label{ddge1}
		\item $d_q \geq 4$, \label{dqge4}
		\item $p_h = d_{\delta} = 0$ and $T^F$ is unmatched. \label{Funmatched}
	\end{enumerate} Additionally, let $\Tau_{F2}$ be the set of terms on the form \begin{equation}
	c \alpha(t) \frac{N^{d_N}}{q^{d_q} N^{\# \mathcal{I}}} \sum_{\mathcal{I}}  \EX \left[F^{(i_0)}(X(t)) \Dim \left((x + \ii \eta + \chi(t))\prod_{l=1}^{n_1} G_{x_l^{(1)} y_l^{(1)}}\right) \prod_{i=2}^{i_0} \Dim \left(\prod_{l=1}^{n_i} G_{x_l^{(i)} y_l^{(i)}}\right)\right].
	\label{F_very_general_term_x} 
	\end{equation} with $d_N \leq 1$ and $d_q \geq 2$. Finally, define \begin{equation*}
		\Tau_F := \Tau_{F1} \cup \Tau_{F2}.
	\end{equation*} 
We use the notation $O(\Tau_F)$ to denote a linear combination of finitely many terms in $\Tau_F$, which may vary line from line with a slight abuse of notation.
\end{definition} 

We are now ready to state the key cancellation lemma stating that all the leading terms in the expansion of $\dF \EX[F(X(t))]$ cancel up to a collection of finitely many non-leading terms in $\Tau_{F}$. This is the last major step toward proving Theorem \ref{GFCT_random_shift}. 
\begin{lemma} [Leading terms cancel]
	Fix any $D > 0$. Uniformly for $t \in [0, 10 \log N]$ and $N^{-1 + \epsilon} + N^{2 \epsilon} q^{-4} \leq \eta \leq N^{-2/3 - \epsilon/2}$ it holds that \begin{equation}
		\dF \EX[F(X(t))] = O(\Tau_F) + O_{\prec}(N^{-D}).
		\label{F_term_cancellation_eq}
	\end{equation} 
	\label{F_time_derivative_bound_lemma}
\end{lemma}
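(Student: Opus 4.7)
The plan is to mirror the two-step strategy of Lemma \ref{time_derivative_bound_lemma}, adapted to accommodate the factor $F^{(i_0)}(X(t))$ and the outer $\Dim$ blocks in the general form \eqref{F_very_general_term}. Starting from \eqref{F_time_derivative_expansion}, I would compute every partial derivative with the rules in Lemma \ref{differentiation_lemma_imm_case} together with \eqref{X_t_h_derivative}--\eqref{Dim_commute_diff_w}, truncate the cumulant expansion at order $K$ large enough so that the remainder $O_\prec(N/q^{K-1})$ is absorbed into $O_\prec(N^{-D})$, and then sort the resulting terms by their orders in the cumulant expansion. As in the proof of Lemma \ref{time_derivative_bound_lemma}, the second order ($k=1$) contributions from $h_{ab}$ and from $w_{ab}$ cancel exactly because $H(t)$ has invariant variance, leaving only the $h_{ab}$-prefactor remainders (which fall into case \ref{phge1}); the third and fifth order ($k=2,4$) terms that survive the $\delta_{ab}^{d_\delta}$ and $h_{ab}^{p_h}$ bookkeeping are either non-leading (cases \ref{phge1} and \ref{ddge1}) or unmatched (case \ref{Funmatched}); and the $k\geq 5$ terms land in case \ref{dqge4}. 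The essential content therefore lies in the fourth order ($k=3$, $\kappa_4$) contribution, which must combine with the two correction-edge terms on lines 3--4 of \eqref{F_time_derivative_expansion} to leave only elements of $\Tau_F$.

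The key new feature compared to Section \ref{perfect_cancellation_section} is that each derivative $\partial_{h_{ab}}$ (or $\partial_{w_{ab}}$) may act either on a Green function entry, producing the same combinatorial pattern as before but multiplied by $F^{(i_0)}(X(t))\prod_i\Dim(\ldots)$, or on $F^{(i_0)}(X(t))$, producing $F^{(i_0+1)}(X(t))\cdot \partial X(t)/\partial h_{ab}$ and thereby appending a new $\Dim G_{ab}$ block via \eqref{X_t_h_derivative}--\eqref{X_t_w_derivative}. In either case the commutation identity \eqref{Dim_commute_diff}--\eqref{Dim_commute_diff_w} means that the $\Dim$ operators can be pulled outside all algebraic manipulations, so the resolvent-identity expansion rules Rules 1, 1$'$, 2, and 3 from Section \ref{sec:step1} extend verbatim: given a leading term of the form \eqref{F_very_general_term} with $d_q=2$, $d_N=1$, $d_\delta=p_h=0$, one may expand an off-diagonal entry via \eqref{ggg} and apply the cumulant expansion; the second-order cumulant piece produces a sum of leading terms of the same shape via the reduced differentiation rule \eqref{new_rules_1}, while all other contributions are captured in $\Tau_F$ (with the $(2-z(t))$-residues accounted for by $\Tau_{F2}$ in \eqref{F_very_general_term_x}).

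Having established these extended expansion rules, Step 2 proceeds exactly as in Section \ref{sec:step2}: enumerate a sufficiently rich starting set $\Tau^{\mathrm{start}}_F$ of type-0, type-A, and type-AB terms at the $F^{(i_0)}$-level appearing in \eqref{F_time_derivative_expansion} (with $i_0$ ranging over the values generated by differentiating $F(X(t))$ a bounded number of times, namely $i_0=1$ initially and growing by at most $K-1$ under each cumulant expansion), apply Rules 1/1$'$ to every off-diagonal Green function entry, Rule 2 to every term with few enough summation indices, and Rule 3 using each existing index that occurs fewer than four times. This generates a finite linear system $(\widetilde{c}_{ml}x_m=\widetilde{b}_l)$ over $\mathbb{Q}$ whose solution, certified by symbolic computer algebra as in Section \ref{perfect_cancellation_section}, witnesses that the leading fourth-order $\kappa_4$-contribution in \eqref{F_time_derivative_expansion} together with the shifted-edge corrections equals a rational linear combination of identities of the form $0=\sum_l \widetilde{c}_{ml}T_l+O(\Tau_F)+O_\prec(N^{-D})$. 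Substituting yields \eqref{F_term_cancellation_eq}.

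The main obstacle is the combinatorial explosion: because each $\Dim$ block contributes an additional parameter (its profile of Green function entries) and because $i_0$ can vary, the number of non-equivalent basis terms and identities is much larger than in the pure $\im m_N$ setting of Lemma \ref{time_derivative_bound_lemma}. The authors report that $4288$ identities are ultimately needed, compared with $138$ for Lemma \ref{time_derivative_bound_lemma}. The practical difficulty is thus to choose $\Tau^{\mathrm{start}}_F$ economically (small enough to keep the system tractable but large enough to trigger the cancellation across every $i_0$-stratum and every partition of indices among the $\Dim$ blocks) and to verify by machine that the resulting coefficient matrix has the required element $(\widetilde{b}_l)$ in its column span; the implementation is described in Section \ref{implementation_details_section}, and once the cancellation is certified, the remaining $O(\Tau_F)$ terms are handled by Lemma \ref{lemma_lemma_tau_g_ineq}-type estimates in the subsequent analysis.
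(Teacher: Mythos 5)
Your proposal matches the paper's proof in strategy and structure: the same sorting by cumulant order (with $k=1$ pairwise cancellations, parity-based unmatchedness for $k=2,4$, $d_q\geq 4$ absorption for $k\geq 5$), the same $F$-adapted expansion Rules 1, 1$'$, 2, 3 made possible by the commutation of $\Dim$ with $\partial_{h_{ab}}$ and $\partial_{w_{ab}}$, and the same reduction of the residual $\kappa_4$/edge-correction cancellation to solvability of a sparse rational linear system certified by computer. The only implementation nuance you skip---allowing starting terms with one empty $\Dim$-block, where $\Dim 1 = 0$ but the inserted $1$ can be expanded via Rules 2 or 3 to yield nontrivial identities---is a detail rather than a conceptual difference.
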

\begin{proof}
	The proof of Lemma \ref{F_time_derivative_bound_lemma} is very similar to the proof of Lemma \ref{time_derivative_bound_lemma}. We will first show that all terms obtained by expanding the derivatives in \eqref{F_time_derivative_expansion} except the ones with $d_q = 2$, $p_h = d_{\delta} = 0$ belong to $\Tau_F$. Then we can follow a similar strategy as in Section \ref{perfect_cancellation_section} to generate identities and solve the resulting linear equation system to see cancellation up to $O(\Tau_F)$ among the terms in $\dF \EX[F(X(t))]$.
	
	We analyze the terms we get by computing the derivatives in \eqref{F_time_derivative_expansion}. First we handle the terms coming from $k = 1$ in the cumulant expansions. As in the expansion of $\dm \EX[m(t, z(t))]$ we see that the terms coming from the second line of \eqref{F_time_derivative_expansion} with $p_h = 0$ perfectly cancels with the first term on the first line of \eqref{F_time_derivative_expansion}. Also we see that the terms from $k=1$ in the fourth line in \eqref{F_time_derivative_expansion} with $p_h = 0$ cancel with the second term on the fourth line. And as before, the remaining terms from the second line of \eqref{F_time_derivative_expansion} all have $p_h \geq 1$, so they belong to $\Tau_F$, by case \ref{phge1} of $\Tau_{F1}$.
	
	Next we look at the terms with $k = 2$ and $k = 4$. Similarly to the expansion of $\dm \EX[m(t, z(t))]$ all the terms with $p_h = d_{\delta} = 0$ will be unmatched. This follows from considering the terms with $d_{\delta} = 0$ and studying the differentiation rules \eqref{G_h_derivative} and \eqref{X_t_h_derivative}. When applying $\partial/\partial h_{ab}$ to a term, it is either inserted with one $a$ and one $b$, or a $h_{ab}$ is added, or one $h_{ab}$ is removed. So two derivatives are consumed for each step of adding and removing $h_{ab}$, and if we are to end up with $p_h = 0$ we need to remove as many $h_{ab}$'s as we add. For the second line of \eqref{F_time_derivative_expansion} we differentiate $F'(X(t)) \Dim G_{ab}$, so we start with one $a$ and one $b$, i.e. an odd number. And we perform an even number of derivatives, $k = 2, 4$. So an even number of $a$'s and $b$'s will be added, resulting in an unmatched term. For the fourth line of \eqref{F_time_derivative_expansion} we differentiate $h_{ab} F'(X(t)) \Dim G_{vv}$, so we start with zero $a$'s and $b$'s, but we also start with one $h_{ab}$, meaning that one derivative needs to hit this $h_{ab}$ in order for us to have $p_h = 0$ in the end. Hence, an odd number of $a$'s and $b$'s will be added since $k$ is even. So the terms from $k = 2$ or $k = 4$ either has $p_h \geq 1$ or $d_{\delta} \geq 1$ and falls under case \ref{phge1} or \ref{ddge1} of $\Tau_{F1}$, or they are unmatched and fall under case \ref{Funmatched} of $\Tau_{F1}$.
	
	The terms from $k \geq 5$ all have $d_q \geq 4$ and therefore fall under case \ref{dqge4} of $\Tau_{F1}$.
	
	Hence the only terms in the expansion of $\dF \EX[F(X(t))]$  not contained in $\Tau_F$ are the terms with $d_q = 2$ and $d_{\delta} = p_h = 0$. These terms come from $k = 3$ in the cumulant expansions and the third line of \eqref{F_time_derivative_expansion}. We will show that these terms cancel up to $O(\Tau_F) + O_{\prec}(N^{-D})$ and thereby finish the proof in Section \ref{perfect_cancellation_section_general_case}.\end{proof}

Next we present the analogue of Lemma \ref{lemma_lemma_tau_g_ineq} to bound the remaining non-leading terms in (\ref{F_term_cancellation_eq}), and combine Lemmas \ref{F_time_derivative_bound_lemma} and \ref{tau_F_lemma} to finish the proof of Proposition \ref{derivative_observable_small}.
 \begin{lemma}
	Assume $T^F \in \Tau_F$, then \begin{equation}
		\abs{T^F} \leq N^{-1/3 + 2 \epsilon} + \frac{N^{2/3 + 2 \epsilon}}{q^4},
		\label{ineq_tau_F_lemma}
	\end{equation} 
for $N \geq N_0(\epsilon, C_0)$, uniformly for $t \in [0, 10 \log N]$ and $N^{-1 + \epsilon} + N^{2 \epsilon} q^{-4} \leq \eta \leq N^{-2/3 - \epsilon/2}$.
	\label{tau_F_lemma}
\end{lemma}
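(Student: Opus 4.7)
The plan is to adapt the proof of Lemma \ref{lemma_lemma_tau_g_ineq} to terms involving the bounded factor $F^{(i_0)}(X(t))$ and nested $\Dim$ operators, using the new input that $\EX[\im m(t, z(t))] \leq CN^{-1/3 + \epsilon}$ from Proposition \ref{imm_bound_random_edge_lemma}. Since $F \in C_0^\infty(\mathbb{R})$, each derivative $F^{(i_0)}$ is uniformly bounded and contributes only a constant factor. Moreover, by the definition \eqref{Dim_def}, each $\Dim(\prod_l G_{x_l y_l})$ is a difference of the imaginary part of the Green function product evaluated at the two spectral parameters $z_j = \widehat{L}_t + \gamma_j + \ii\eta$ ($j=1,2$), both lying in $\edge(t)$. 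Expanding $\im(\prod_l G_{x_l y_l})$ by the product rule extracts at least one factor $\im G_{x_l y_l}$, so after applying the Ward identity \eqref{ward_identity} one may substitute $\EX[\im m]$ and then invoke \eqref{imm_bound}.

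First, I would establish analogs of Lemmas \ref{lemma_general_imm_bound} and \ref{prop_unmatched_term_bound} in the present setting: for any $T^F$ of the form \eqref{F_very_general_term} one has
\begin{equation*}
	|T^F| \leq N^\xi \frac{N^{d_N}}{q^{d_q}} \left( \EX[\im m(t,z(t))] + \frac{1}{N} \right),
\end{equation*}
and if $T^F$ is additionally unmatched one further gains a factor $N^{-1}$, i.e.\ $|T^F| \leq N^\xi (N^{d_N}/q^{d_q})(\EX[\im m]/N + 1/N^2)$. These bounds follow along the lines of the appendix proofs of Lemmas \ref{lemma_general_imm_bound} and \ref{prop_unmatched_term_bound}, since the $F^{(i_0)}(X(t))$ factor is absorbed into the constant, and every spectral parameter generated inside a $\Dim$-factor remains in $\edge(t)$, so the entrywise local law \eqref{entries_small} and the Ward identity apply uniformly. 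Combined with Proposition \ref{imm_bound_random_edge_lemma}, both bounds can be rewritten in terms of $N^\xi (N^{d_N}/q^{d_q}) N^{-1/3+\epsilon}$ (respectively with the extra $N^{-1}$ for unmatched terms).

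Equipped with these estimates, I would perform a case analysis on the conditions in the definition of $\Tau_F$, paralleling the proof of Lemma \ref{lemma_lemma_tau_g_ineq}. For $T^F \in \Tau_{F1}$ under condition \ref{phge1} (i.e.\ $p_h \geq 1$), a further cumulant expansion in $h_{ab}$ yields an extra factor $1/N$, which together with $d_N \leq 1$ and $q \geq N^{1/6+\bd}$ gives $|T^F| \leq N^{-1/3 + 2\epsilon}$. Condition \ref{ddge1} ($d_\delta \geq 1$) is handled analogously, since setting $a = b$ effectively reduces $d_N$ by one. For condition \ref{dqge4} ($d_q \geq 4$), the matched bound combined with $q \geq N^{1/6+\bd}$ gives $|T^F| \leq N^\xi (N/q^4) N^{-1/3+\epsilon} \leq N^{2/3+2\epsilon}/q^4$. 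For condition \ref{Funmatched} (unmatched), the unmatched bound together with $\EX[\im m] \leq CN^{-1/3+\epsilon}$ yields $|T^F| \leq N^{-1/3+2\epsilon}$ directly. Finally, for $T^F \in \Tau_{F2}$, the prefactor satisfies $|x + \ii\eta + \chi(t)| \prec N^{-2/3+\epsilon}$ (using $|\gamma_j| \leq C_0 N^{-2/3+\epsilon}$, $\eta \leq N^{-2/3-\epsilon/2}$, and $|\chi(t)| \prec 1/(q\sqrt{N})$ together with $q \geq N^{1/6+\bd}$), so combined with $d_q \geq 2$ the matched bound yields $|T^F| \leq N^{-1/3+2\epsilon}$.

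The main obstacle I expect is verifying the analog of Lemma \ref{prop_unmatched_term_bound} in the presence of multiple nested $\Dim$-factors and the outer $F^{(i_0)}(X(t))$, since the iterative expansion via an unmatched index requires differentiating the entire integrand in $h_{ab}$. However, by the commutation identities \eqref{Dim_commute_diff}--\eqref{Dim_commute_diff_w}, each derivative passes inside the corresponding $\Dim$, and applying $\partial/\partial h_{ab}$ to $F^{(i_0)}(X(t))$ merely produces $F^{(i_0+1)}(X(t))$ multiplied by a Green-function factor of the form \eqref{X_t_h_derivative}. Both operations preserve the structural form \eqref{F_very_general_term}, so the unmatched-index bookkeeping of \cite{Schnelli_Xu_2022} carries through with essentially unchanged power-counting, completing the proof.
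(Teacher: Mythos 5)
Your proposal is correct and follows essentially the same route as the paper: establish the $F$-analogues of the matched and unmatched imaginary-part bounds (Lemmas \ref{lemma_F_imm_bound} and \ref{prop_unmatched_term_bound_F}), feed in the key input $\EX[\im m] \lesssim N^{-1/3+\epsilon}$ from Proposition \ref{imm_bound_random_edge_lemma}, and then run the same case analysis over the conditions defining $\Tau_{F1}$ and $\Tau_{F2}$ — $p_h\geq 1$ and $d_\delta\geq 1$ giving an extra $1/N$, $d_q\geq 4$ producing the $N^{2/3+2\epsilon}/q^4$ error, the unmatched case using the $N^{-1}$ gain, and $\Tau_{F2}$ using $|x+\ii\eta+\chi(t)|\prec N^{-2/3+\epsilon}$ together with $d_q\geq 2$. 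Your identification of the commutation identities \eqref{Dim_commute_diff}--\eqref{Dim_commute_diff_w} as the reason the unmatched-index iteration survives the $\Dim$-nesting and the $F^{(i_0)}$ factor is exactly how the paper handles it.
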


\begin{proof}[Proof of Proposition \ref{derivative_observable_small}]
	We bound each of the terms in $O(\Tau_F)$ in \eqref{F_term_cancellation_eq} using \eqref{ineq_tau_F_lemma}. By bounding the number of terms in $O(\Tau_F)$ by $N^{\epsilon/16}$ we obtain \eqref{derivative_observable_small_eq} and finish the proof.
\end{proof}

\subsection{Generating identities and obtaining cancellation: Proof of Lemma \ref{F_time_derivative_bound_lemma}}
\label{perfect_cancellation_section_general_case}
In this section we will finish the proof of Lemma \ref{F_time_derivative_bound_lemma} by generating identities and thereby showing cancellation between the terms obtained by expanding the derivatives in \eqref{F_time_derivative_expansion}. The differences from the strategy used in Section \ref{perfect_cancellation_section} are minor, and we will slightly abuse notation. We will again use the resolvent identity \eqref{resolvent_identity} in  different ways to generate identities. Instead of expanding terms of the form \eqref{start_term} we expand terms of the following form (\cf (\ref{F_very_general_term}) with $d_q = 2$, $d_N = 1$, $d_{\delta} = p_h = 0$):
\begin{equation}
	c\alpha(t) \frac{N}{q^{2} N^{\# \mathcal{I}}} \sum_{\mathcal{I}} \EX \left[F^{(i_0)}(X(t)) \prod_{i=1}^{i_0} \Dim \left(\prod_{l=1}^{n_i} G_{x_l^{(i)} y_l^{(i)}}\right)\right].
	\label{start_term_F}
\end{equation} 
We remark that by direct computations using Lemma \ref{differentiation_rules_X_lemma}, the leading terms on the right side of (\ref{F_time_derivative_expansion}) are of the above form with $c=1$, $\alpha(t)=e^{-t}(1-e^{-t})$, and $i_0\geq 1$.

We then use expansion rules 1--3 similarly as in Subsection \ref{perfect_cancellation_section} 
for general terms in the form \eqref{start_term_F}. We will get the equations analogue to \eqref{expansion_off_diagonal}, \eqref{expansion_off_diagonal_reflected}, \eqref{expansion_off_1}, and \eqref{expansion_off_1_reused_index}, where the leading terms will be of the form \eqref{start_term_F} up to multiplication by a constant and the non-leading terms will be in $\Tau_F$. We omit writing out these analogue equations.

We can extend the definition of type-0, type-A, and type-AB term to terms on the form \eqref{start_term_F} (\cf Definition \ref{def_type}). 
\begin{definition}
	Let $T^F$ be a term on the form (\ref{start_term_F}).
	Further, let $\# j$ denote the number of times the index $j \in \mathcal{I}$ occurs in the sequences \begin{equation*}
		\left(x_{l}^{(i)}\right)_{1 \leq i \leq i_0, 1 \leq l \leq n_i}, \left(y_{l}^{(i)}\right)_{1 \leq i \leq i_0, 1 \leq l \leq n_i}.
	\end{equation*} 
We call $T^F$ a type-0 term if $\# j = 2$ for all $j \in \mathcal{I}$. Next we call $T^F$ a type-A term if $\mathcal{I} = \{a\} \cup \widehat{\mathcal{I}}$, and $\# a = 4$, and $\# j = 2$ for all $j \in \widehat{\mathcal{I}}$. Finally, we call $T^F$ a type-AB term if $\mathcal{I} = \{a,b\} \cup \widehat{\mathcal{I}}$, and $\# a = \# b = 4$, and $\# j = 2$ for all $j \in \widehat{\mathcal{I}}$.
\end{definition}

 We next generate identities using an analogue strategy as explained Sections \ref{sec:step2}--\ref{sec:detail} to obtain a list of identities as in (\ref{list_of_identities}). In analogy to $\Tau_4^{\mathrm{start}}$ defined in Definition \ref{def_start}, we define $\Tau_4^{F\text{start}}$ to be the set of all type-0, type-A, and type-AB terms on the form \eqref{start_term_F} with $0 \leq \# \mathcal{I} \leq 4$. In the identities we will get terms with $\# \mathcal{I} \leq 5$, and we will denote the indices of $\mathcal{I}$ as $a, b, c, d, e$, and the only indices that occur more than twice are $a$ and $b$. Also, we allow the terms in $\Tau_4^{F\text{start}}$ to have at most one empty $\Dim$-factor, i.e. $\Dim 1$. Of course $\Dim 1 = 0$, so all the terms with an empty $\Dim$-factor will be 0, but we may expand the $1$ in the empty factor using expansion rules 2 and 3 to obtain non-trivial identities. The term \begin{equation*}
	\e^{-t}(1 - \e^{-t}) \frac{N}{q^{2}} \EX \left[F^{(1)}(X(t)) \Dim \left(1\right)\right],
\end{equation*} is an example of a term with an empty factor that also satisfies $\# \mathcal{I} = 0$.

Similarly to the mechanism introduced below Definition \ref{def_start}, we first use expansion rules 1 and 1', on all off-diagonal Green function entries for each $T^F \in \Tau_4^{F\text{start}}$. Next, we use expansion rule 2 on each of the $\Dim$-factors for terms with $\# \mathcal{I} \leq 3$. Finally, we use expansion rule 3 to expand a 1 in each of the $\Dim$-factors for $T^F \in \Tau_4^{F\text{start}}$ with the existing index $a$ if $\# a \leq 2$ in $T^F$, and we also use expansion rule~3 with the existing index $b$ if $\# b \leq 2$ in $T^F$. Note that if the start term has an empty factor, we can only generate a non-trivial identity if we expand the empty factor using expansion rules 2 or 3. Otherwise all the terms in the identity will contain an empty factor, creating the identity $0=0$. So, when generating the identities as described above, we skip generating these trivial identities. By following this procedure we generate a list of identities analogue to \eqref{list_of_identities}, \ie
\begin{equation}
	\left(0 = \sum_{l=1}^{L_F} \widetilde{c}^{F}_{ml} T^F_{l} + O(\Tau_F) +O_{\prec}\left(N^{-D}\right) \right)_{m=1}^{M_F}, \qquad \Tau^{F\text{basis}}:=(T^{F}_l)_{l=1}^{L_F},
	\label{list_of_identities_F}
\end{equation}
with $M_F=14246$, $L_F=13852$, and $\widetilde{c}^F_{ml}\in \mathbb{Q}$ being rational numbers, where $\Tau^{F\text{basis}}$ is the set of unique basis terms (up to equivalence) that we collected along the way. Here we use the superscript $F$ to distinguish it from \eqref{list_of_identities}.

Similarly to (\ref{imm_expansion_leading_terms_notation}), the leading terms in the expansion of $\dF \EX[F(X(t))]$ in (\ref{F_time_derivative_expansion}) also belong to the basis set $\Tau^{F\text{basis}}$. Hence (\ref{F_time_derivative_expansion}) can be written as
  \begin{equation}
 	\dF \EX[F(X(t))]= \kappa_4 \sum_{l \in \mathcal{L}^F_0} b^F_l T^F_l + O(\Tau_F) + O_{\prec}(N^{-D}),
 	\label{imm_expansion_leading_terms_notation_F}
 \end{equation} 
 with $b^F_l \in \mathbb{Q}~(l\in \mathcal{L}^F_0 \subset \{1,2,\cdots,L_F\})$ being the rational coefficients. Repeating the same arguments used in (\ref{imm_expansion_leading_terms_notation})-(\ref{final_step}), it then suffices to verify the existence of 
 a rational solution $(x^F_{m})_{m = 1}^{M_F} \in \mathbb{Q}^{M_F}$ to the following linear equation
 \begin{equation}
 	\left(\sum_{m = 1}^{M_F}  \widetilde{c}^F_{ml} x^F_{m} = \widetilde{b}^F_{l}\right)_{l=1}^{L_F}, \qquad   \widetilde{b}^F_l:=\mathbf{1}_{l\in \mathcal{L}^F_0} b^F_l.
 	\label{goal_linear_eq_sys_F}
 \end{equation} 
Note that the coefficient matrix $\widetilde{C}^F:=(\widetilde{c}^{F}_{ml})$ for the system (\ref{list_of_identities_F}) has dimensions $13852 \times 14246$. Additionally it has 137616 non-zero entries, which is only $\approx 0.07\%$ of the entries. The rank of the coefficient matrix $\widetilde{C}^F$ is 7893, and the solution vector $(x^F_{m})$ contains 4288 non-zero entries, meaning that 4288 of the identities generated are used to express the expansion of $\dF \EX[F(X(t))]$. The linear equation system and the solution are available at \cite{githubrepo}. Hence we have finished the proof of Lemma \ref{F_time_derivative_bound_lemma}.

\begin{remark}
	$\Tau_4^{F\text{start}}$ is considerably larger than $\Tau_4^{\text{start}}$ from Definition \ref{def_start} as one can generate $\Tau_4^{F\text{start}}$ by taking each term in $\Tau_4^{\text{start}}$ and then partition the Green function entries into different $\Dim$-factors. This means that the list of identities is much longer now, than it was in Subsection \ref{perfect_cancellation_section}, which makes the linear system in \eqref{goal_linear_eq_sys} much more time consuming to solve. The time required to solve the system increased from a few seconds to roughly one hour. 
	\label{difficult_to_solve_lin_eq_remark}. 
\end{remark}

We provide some examples of identities in the list of identities (\ref{list_of_identities_F}).
\begin{example}[Expanding term using expansion rule 1]
	We expand $G_{cd}$ in the outer factor $\Dim (G_{aa}^{2}G_{bb}^{2}G_{cd})$ in the term \begin{equation*}
		e^{-t}(1-e^{-t}) \frac{N}{q^2N^4}  \sum_{a,b,c,d} \EX \left[F^{(2)}(X(t))\Dim (G_{aa}^{2}G_{bb}^{2}G_{cd})\Dim (G_{cd})\right],
	\end{equation*} using expansion rule 1 to obtain the following identity. \begin{equation*}
		\begin{split}
			0 =
			& -\frac{1}{2} e^{-t}(1-e^{-t}) \frac{N}{q^2N^5}  \sum_{a,b,c,d,e} \EX \left[F^{(2)}(X(t))\Dim (G_{aa}^{2}G_{bb}^{2}G_{cd}G_{ee})\Dim (G_{cd})\right] \\
			& -\frac{1}{2} e^{-t}(1-e^{-t}) \frac{N}{q^2N^5}  \sum_{a,b,c,d,e} \EX \left[F^{(2)}(X(t))\Dim (G_{aa}^{2}G_{bb}^{2}G_{de})\Dim (G_{cc}G_{de})\right] \\
			& -e^{-t}(1-e^{-t}) \frac{N}{q^2N^4}  \sum_{a,b,c,d} \EX \left[F^{(2)}(X(t))\Dim (G_{aa}^{2}G_{bb}^{2}G_{cd})\Dim (G_{cd})\right] \\
			& -e^{-t}(1-e^{-t}) \frac{N}{q^2N^5}  \sum_{a,b,c,d,e} \EX \left[F^{(3)}(X(t))\Dim (G_{aa}^{2}G_{bb}^{2}G_{de})\Dim (G_{cd})\Dim (G_{ce})\right] \\
			& -\frac{1}{2} e^{-t}(1-e^{-t}) \frac{N}{q^2N^5}  \sum_{a,b,c,d,e} \EX \left[F^{(2)}(X(t))\Dim (G_{aa}^{2}G_{bb}^{2}G_{ce}G_{de})\Dim (G_{cd})\right] \\
			& -\frac{1}{2} e^{-t}(1-e^{-t}) \frac{N}{q^2N^5}  \sum_{a,b,c,d,e} \EX \left[F^{(2)}(X(t))\Dim (G_{aa}^{2}G_{bb}^{2}G_{de})\Dim (G_{cd}G_{ce})\right] \\
			& -4 e^{-t}(1-e^{-t}) \frac{N}{q^2N^5}  \sum_{a,b,c,d,e} \EX \left[F^{(2)}(X(t))\Dim (G_{aa}G_{ac}G_{ae}G_{bb}^{2}G_{de})\Dim (G_{cd})\right] \\
			& + O({\Tau_F}) + O_{\prec}\left(N^{-1/3}\right). 
		\end{split}
	\end{equation*}
\end{example}

\begin{example}[Expanding term using expansion rule 2]
	We expand a $1$ in the factor $\Dim (G_{aa})$ in the term \begin{equation*}
		e^{-t}(1-e^{-t}) \frac{N}{q^2N^1}  \sum_{a} \EX \left[F^{(2)}(X(t))\Dim (G_{aa})\Dim (G_{aa})\right],
	\end{equation*} to obtain the following identity, \begin{equation*}
		\begin{split}
			0 = 
			& -2 e^{-t}(1-e^{-t}) \frac{N}{q^2N^2}  \sum_{a,b} \EX \left[F^{(2)}(X(t))\Dim (G_{aa})\Dim (G_{aa}G_{bb})\right] \\
			& -1 e^{-t}(1-e^{-t}) \frac{N}{q^2N^3}  \sum_{a,b,c} \EX \left[F^{(2)}(X(t))\Dim (G_{aa})\Dim (G_{aa}G_{bb}G_{cc})\right] \\
			& -1 e^{-t}(1-e^{-t}) \frac{N}{q^2N^1}  \sum_{a} \EX \left[F^{(2)}(X(t))\Dim (G_{aa})\Dim (G_{aa})\right] \\
			& -2 e^{-t}(1-e^{-t}) \frac{N}{q^2N^3}  \sum_{a,b,c} \EX \left[F^{(3)}(X(t))\Dim (G_{aa})\Dim (G_{aa}G_{bc})\Dim (G_{bc})\right] \\
			& -1 e^{-t}(1-e^{-t}) \frac{N}{q^2N^3}  \sum_{a,b,c} \EX \left[F^{(2)}(X(t))\Dim (G_{aa})\Dim (G_{aa}G_{bc}^{2})\right] \\
			& -2 e^{-t}(1-e^{-t}) \frac{N}{q^2N^3}  \sum_{a,b,c} \EX \left[F^{(2)}(X(t))\Dim (G_{aa})\Dim (G_{ab}G_{ac}G_{bc})\right] \\
			& -2 e^{-t}(1-e^{-t}) \frac{N}{q^2N^3}  \sum_{a,b,c} \EX \left[F^{(2)}(X(t))\Dim (G_{aa}G_{bc})\Dim (G_{ab}G_{ac})\right] \\
			& + O({\Tau_F}) + O_{\prec}\left(N^{-1/3}\right). 
		\end{split}
	\end{equation*}
\end{example}

\begin{example}[Expanding term using expansion rule 3]
	We expand $1$ in the factor $\Dim (1)$ in the term \begin{equation*}
		e^{-t}(1-e^{-t}) \frac{N}{q^2N^2}  \sum_{a,b} \EX \left[F^{(2)}(X(t))\Dim (1)\Dim (G_{aa}^{2}G_{bb})\right],
	\end{equation*} using expansion rule 3 with the existing index $b$ to obtain the following identity, \begin{equation*}
		\begin{split}
			0 =
			& -2 e^{-t}(1-e^{-t}) \frac{N}{q^2N^2}  \sum_{a,b} \EX \left[F^{(2)}(X(t))\Dim (G_{aa}^{2}G_{bb})\Dim (G_{bb})\right] \\
			& - e^{-t}(1-e^{-t}) \frac{N}{q^2N^3}  \sum_{a,b,c} \EX \left[F^{(2)}(X(t))\Dim (G_{aa}^{2}G_{bb})\Dim (G_{bb}G_{cc})\right] \\
			& -2 e^{-t}(1-e^{-t}) \frac{N}{q^2N^3}  \sum_{a,b,c} \EX \left[F^{(3)}(X(t))\Dim (G_{aa}^{2}G_{bb})\Dim (G_{bc})\Dim (G_{bc})\right] \\
			& -2 e^{-t}(1-e^{-t}) \frac{N}{q^2N^3}  \sum_{a,b,c} \EX \left[F^{(2)}(X(t))\Dim (G_{aa}^{2}G_{bb}G_{bc})\Dim (G_{bc})\right] \\
			& -1 e^{-t}(1-e^{-t}) \frac{N}{q^2N^3}  \sum_{a,b,c} \EX \left[F^{(2)}(X(t))\Dim (G_{aa}^{2}G_{bb})\Dim (G_{bc}^{2})\right] \\
			& -4 e^{-t}(1-e^{-t}) \frac{N}{q^2N^3}  \sum_{a,b,c} \EX \left[F^{(2)}(X(t))\Dim (G_{aa}G_{ab}G_{ac}G_{bb})\Dim (G_{bc})\right] \\
			& + O({\Tau_F}) + O_{\prec}\left(N^{-1/3}\right). 
		\end{split}
	\end{equation*}
\end{example}

\subsection{Bounding non-leading terms in $\Tau_F$: Proof of Lemma \ref{tau_F_lemma}}

We begin by the following lemma analogous to Lemma \ref{lemma_general_imm_bound}: \begin{lemma}
	Fix any small $\xi > 0$. Let $T^F$ be a general term on the form \eqref{F_very_general_term}. Then \begin{equation}
			\abs{T^F} \leq N^{\xi} \frac{N^{d_N}}{q^{d_q}} \left(\EX[\im m(t, \widehat{L}_t + \gamma_1 + \ii \eta) + \im m(t, \widehat{L}_t + \gamma_2 + \ii \eta)] + \frac{1}{N}\right),
			\label{imm_lemma_F_eq}
		\end{equation} uniformly in $t \in [0, 10 \log N]$ and $N^{-1 + \epsilon} + N^{2 \epsilon} q^{-4} \leq \eta \leq N^{-2/3 - \epsilon/2}$. 
		\label{lemma_F_imm_bound}
\end{lemma}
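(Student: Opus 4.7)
The plan is to reduce Lemma \ref{lemma_F_imm_bound} to the template of Lemma \ref{lemma_general_imm_bound} by first peeling off the $F^{(i_0)}(X(t))$ prefactor and then expanding every $\Dim$ operator into imaginary parts evaluated at the two spectral parameters $z_j(t) := \widehat{L}_t + \gamma_j + \ii\eta$, $j=1,2$. Since $F \in C_0^\infty(\mathbb{R})$ by the hypothesis of Theorem \ref{GFCT_random_shift}, all of its derivatives are uniformly bounded, so $|F^{(i_0)}(X(t))| \leq C$ almost surely. Combined with the entry-wise bound $|h_{ab}|^{p_h} \prec q^{-p_h}$ coming from Assumption \ref{H_assumptions} and the trivial $\delta_{ab}^{d_\delta} \leq 1$, these three factors contribute only an overall multiplicative constant and can be absorbed without affecting the shape of the target estimate.

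Next, by the definition of $\Dim$ in \eqref{Dim_def}, each factor $\Dim\bigl(\prod_l G_{x_l^{(i)} y_l^{(i)}}\bigr)$ equals $\im P_i(t, z_2(t)) - \im P_i(t, z_1(t))$, where $P_i := \prod_l G_{x_l^{(i)} y_l^{(i)}}$. Expanding $\prod_{i=1}^{i_0} \Dim(P_i)$ then yields a sum of $2^{i_0}$ terms, each of the form $\pm \prod_i \im P_i(t, z_{\sigma(i)}(t))$ for some $\sigma : \{1, \ldots, i_0\} \to \{1, 2\}$. Since $i_0$ is a fixed finite integer (bounded by the cumulant truncation order), this is a finite sum of bounded combinatorial complexity.

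For each fixed $\sigma$, I would apply the same argument as in the proof of Lemma \ref{lemma_general_imm_bound}, now with Green function factors potentially evaluated at either $z_1(t)$ or $z_2(t)$. Specifically, the time-dependent entrywise local law \eqref{time_dependent_local_law} gives $|G_{ij}(t, z)| \prec 1$ uniformly, hence $|\im P_i(t, z_{\sigma(i)}(t))| \prec 1$ for every $i$. To extract a factor of $\im m$ from a single well-chosen $\im P_{i^*}$, I would distribute the imaginary part onto one Green function entry of $P_{i^*}$ and then either apply $\frac{1}{N}\sum_j \im G_{jj}(z) = \im m_N(z)$ directly if that entry is diagonal, or use the Ward-plus-Cauchy-Schwarz estimate $|\im G_{jk}(z)| \leq \sqrt{\im G_{jj}(z)\,\im G_{kk}(z)}$ together with a further averaging if it is off-diagonal. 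Bounding the remaining $i_0-1$ factors crudely by $O(1)$ and summing over the $2^{i_0}$ choices of $\sigma$ would produce the claimed bound \eqref{imm_lemma_F_eq}; the additive $1/N$ absorbs the contributions from diagonal sub-sums in which two distinct summation indices of $\mathcal{I}$ coincide.

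The only non-bookkeeping point—and the main potential obstacle—is to extract precisely one factor of $\im m$ (at either $z_1(t)$ or $z_2(t)$) per sub-term, rather than $i_0$ factors; otherwise the final bound would scale like a product of $\im m$'s and would be useless for the subsequent estimates. This succeeds because the entrywise local law alone is strong enough to bound $i_0-1$ of the $\im P_i$ factors by $O(1)$, so only one Ward-type averaging is needed per sub-term, the rest being handled purely by the local law estimates.
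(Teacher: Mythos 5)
Your overall structure matches the paper's: bound $F^{(i_0)}$ by a constant, bound $h_{ab}^{p_h}$ and $\delta_{ab}^{d_\delta}$, expand each $\Dim$-factor into $\im P_i(z_2)-\im P_i(z_1)$ to produce $2^{i_0}$ sub-terms indexed by $\sigma\in\{1,2\}^{i_0}$, and then push the argument of Lemma~\ref{lemma_general_imm_bound} through each sub-term (this is exactly the paper's display~\eqref{F_general_term_expand_bound}).

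Where you diverge is in the mechanism for extracting the single factor of $\im m$. The paper literally reuses the three-case split of Lemma~\ref{lemma_general_imm_bound}: for degree $\geq 2$ it applies AM--GM followed by the Ward identity~\eqref{ward_identity}; for degree $0$ it applies the product inequality~\eqref{im_inequality}; and for degree $1$ it observes that such a term is unmatched and invokes the analogue of the preliminary unmatched bound, Lemma~\ref{preliminary_unmatched_bound}, which relies on the whole iterative-expansion machinery. Your pointwise bound $|\im G_{jk}(z)|\le\sqrt{\im G_{jj}(z)\,\im G_{kk}(z)}$ (valid because $\im G=\eta\,G G^*\ge 0$, so Cauchy--Schwarz gives $|(\im G)_{jk}|\le\sqrt{(\im G)_{jj}(\im G)_{kk}}$, and $(\im G)_{jk}=\im(G_{jk})$ for complex-symmetric $G$) is a genuinely different and more elementary route: it collapses the three cases into one, because after distributing the imaginary part onto one Green-function entry you either get $\im G_{jj}$ directly or $\sqrt{\im G_{jj}\im G_{kk}}\le\tfrac12(\im G_{jj}+\im G_{kk})$, and averaging over the free indices yields $\im m$ in all cases. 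In particular this bypasses the unmatched-term machinery for degree-$1$ terms; the price is a slightly weaker bound $\prec\im m$ there instead of the paper's $\prec 1/N$, which is still sufficient for~\eqref{imm_lemma_F_eq}. Two points you should be precise about if you write this up: (i) "distribute the imaginary part onto one entry" requires an explicit analogue of the product inequality~\eqref{im_inequality}; the imaginary part of a product is not simply the imaginary part of one factor times the others, so one needs $|\im\prod_l A_l|\le C_n\sum_l|\im A_l|\prod_{j\ne l}|A_j|$, justified by expanding into real and imaginary parts and noting every surviving monomial contains at least one $\im A_l$; and (ii) bounding the remaining $i_0-1$ factors by $O(1)$ uses $\max_{j,k}|G_{jk}(t,z_{\sigma(i)})|\prec 1$ from~\eqref{time_dependent_local_law}, which holds with overwhelming probability but not deterministically, so one should cut on the high-probability event and absorb the complement into the $1/N$ error via $\|G\|_{\max}\le 1/\eta\le N$ together with a small failure probability. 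Finally, for $p_h\ge 1$ you simply bound $|h_{ab}|^{p_h}\prec q^{-p_h}$ while the paper performs an extra cumulant expansion to reduce $d_N$ by one; both routes suffice here since the extra $q^{-p_h}$ is already favorable, though the paper's route produces a sharper bound.
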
 
Similarly to Lemma \ref{prop_unmatched_term_bound} we have the following bound on unmatched terms on the form \eqref{F_very_general_term}. The proofs of Lemmas \ref{lemma_F_imm_bound}--\ref{prop_unmatched_term_bound_F} are postponed to Appendix \ref{appendix_F_proofs}.
\begin{lemma}
	Fix any small $\xi > 0$. Let $T^F$ be an unmatched term on the form \eqref{F_very_general_term}. Then \begin{equation}
		\abs{T^F} \leq N^{\xi}  \frac{N^{d_N}}{q^{d_q}} \left(\frac{\EX[\im m(t, \widehat{L}_t + \gamma_1 + \ii \eta) + \im m(t, \widehat{L}_t + \gamma_2 + \ii \eta)]}{N} + \frac{1}{N^2}\right),
		\label{prop_unmatched_term_bound_F_eq}
	\end{equation} uniformly in $t \in [0, 10\log N]$ and $N^{-1 + \epsilon} + N^{2 \epsilon} q^{-4} \leq \eta \leq N^{-2/3 - \epsilon/2}$.
	\label{prop_unmatched_term_bound_F}
\end{lemma}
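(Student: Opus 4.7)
The plan is to adapt the proof of Lemma \ref{prop_unmatched_term_bound} (the non-$F$ analogue), established in Appendix \ref{sec:proof_unmatched} following the unmatched-index strategy of Proposition 4.3 in \cite{Schnelli_Xu_2022}, to the richer setting of \eqref{F_very_general_term} with its $F^{(i_0)}(X(t))$ factor and $i_0$ nested $\Dim$-wrappers. The underlying mechanism is that an index $v$ appearing an odd number of times forces an iterative resolvent-plus-cumulant expansion in which each step furnishes an extra $N^{-1}$ factor beyond the generic bound \eqref{imm_lemma_F_eq}; the terminated expansion then produces matched terms to which Lemma \ref{lemma_F_imm_bound} applies, yielding the desired improvement.

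Concretely, the first step is to fix an unmatched index $v \in \mathcal{I}$ and select any off-diagonal entry $G_{vx}$ containing $v$ (such an entry exists because $v$ appears an odd number of times while diagonal occurrences contribute in pairs). Rewrite this entry inside its $\Dim$-factor using the resolvent identity $G_{vx} = -z^{-1}\delta_{vx} + z^{-1}\sum_{j} h_{vj} G_{jx}$; the Kronecker term vanishes because $v \neq x$, the prefactor $1/z$ is uniformly bounded since $|z| \asymp 2$ on $\edge(t)$, and the substitution is legitimate inside $\Dim$ thanks to the commutation of $\partial_{h_{ab}}$ with $\Dim$ recorded in \eqref{Dim_commute_diff}--\eqref{Dim_commute_diff_w}. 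Then expand $\EX[h_{vj} \cdot (\mathrm{rest})]$ via Lemma \ref{cumulant_remainder_small_lemma}. The second-cumulant contribution, carrying the gain $1/N$, produces through Lemmas \ref{differentiation_lemma_imm_case}--\ref{differentiation_rules_X_lemma} either (i) a term in which the derivative hits another Green-function entry (possibly in a different $\Dim$-factor), preserving the form \eqref{F_very_general_term}, or (ii) a term in which the derivative hits $F^{(i_0)}(X(t))$ and yields $F^{(i_0+1)}(X(t)) \cdot \partial_{h_{vj}} X(t)$, the latter itself being a $\Dim$-factor containing at most two Green-function entries. Since $F \in C_0^{\infty}$, every $F^{(k)}$ is uniformly bounded on its compact support, so case (ii) is harmless. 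Higher-order cumulants carry extra factors $q^{-(k-1)}$ and are easily absorbed using $q \geq N^{1/6+\bd}$, while the truncation remainder $R_{l+1}$ is $O_{\prec}(N^{-D})$ for large $l$.

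The main obstacle, inherited from the non-$F$ proof, is ensuring the iteration terminates: the newly produced terms may themselves be unmatched (in $v$ or in the fresh index $j$), in which case the procedure is reapplied. Termination is controlled by tracking a combinatorial complexity (e.g.\ degree minus the number of unmatched indices) exactly as in Appendix \ref{sec:proof_unmatched}, yielding after finitely many steps a finite linear combination of matched terms carrying a cumulative factor $N^{-1}$, plus a negligible $O_{\prec}(N^{-D})$ remainder. Applying Lemma \ref{lemma_F_imm_bound} to each matched piece, and noting that every $\Dim$-evaluation occurs at the two edge points $\widehat{L}_t + \gamma_{1,2} + \ii \eta$ (so the imaginary-part factor appearing in Lemma \ref{lemma_F_imm_bound} is exactly the one in \eqref{prop_unmatched_term_bound_F_eq}), produces the claimed bound. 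The only genuinely new check relative to the non-$F$ case is that derivatives of $F^{(i_0)}(X(t))$ increase $i_0$ without increasing the Green-function degree or the number of unmatched indices, so they do not obstruct the termination argument; this is precisely where the boundedness of all derivatives of $F$ on its compact support, together with boundedness of $X(t)$ on $\supp F$, is used decisively.
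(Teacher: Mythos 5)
Your iterative-expansion plan is the right outline; it is what the paper does. But the resolvent identity you feed into the iteration is not the one the paper uses, and that is a genuine gap, not a cosmetic choice. With $G_{vx}=z^{-1}\sum_j h_{vj}G_{jx}$, the leading cumulant term, when $D_{vj}$ hits $G_{jx}$, produces $-\bigl(G_{jv}G_{jx}+G_{jj}G_{vx}\bigr)$, and the second piece, summed over the fresh index $j$, gives $-z^{-1}\underline{G}\,G_{vx}\prod_{i\ge 2}G_{x_iy_i}$ --- a term with the same degree and the same $v$-occurrence count as the starting $T^F$. Your iteration makes no progress on it, and the potential you propose (``degree minus the number of unmatched indices'') does not decrease. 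To close this you would need a self-consistent equation for $T^F$ (divide by $1+z^{-1}m_{sc}$, then control the $m-m_{sc}$ fluctuation inside the expectation), which is a genuinely different and more delicate argument. The paper deliberately sidesteps this by substituting instead the identity \eqref{green_function_entry_expansion}, $G_{ij}=\delta_{ij}\underline{G}+G_{ij}\underline{H(t)G}-\underline{G}(H(t)G)_{ij}$, whose two non-trivial pieces generate exactly opposite $G_{jj}G_{kk}$-type contributions under the two leading cumulant derivatives $D_{jk}$ and $D_{v_1k}$, so the troublesome same-degree term cancels algebraically (see the discussion following \eqref{unmatched_expansion}). A second difficulty with your identity is the scalar prefactor $z^{-1}$: it cannot be pulled outside a $\Dim$-bracket, since $\Dim$ (see \eqref{Dim_def}) evaluates at two distinct spectral points $\widehat{L}_t+\gamma_{1}+\ii\eta$ and $\widehat{L}_t+\gamma_{2}+\ii\eta$; the commutation of $\partial_{h_{ab}}$ with $\Dim$ you cite concerns the differential operator and does nothing for a $z$-dependent scalar. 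Identity \eqref{green_function_entry_expansion} is a pure $G$/$H$ identity with no explicit $z$-prefactor, so it substitutes cleanly inside the $\Dim$-factors of \eqref{F_very_general_term} and keeps the machinery intact.

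Your closing step is also underspecified. The iteration does not terminate in a finite linear combination of matched terms carrying a cumulative factor $N^{-1}$: the correct analogue of \eqref{im_expansion_improved_error} yields unmatched terms of arbitrarily high degree (eventually killed by power counting via $\Psi^{d+s}$), plus an error that must already be $O_\prec(\im m/N+1/N^2)$. Moreover, the $1/N^2$ in \eqref{prop_unmatched_term_bound_F_eq} does not fall out of Lemma \ref{lemma_F_imm_bound} alone; it requires the inclusion-exclusion decomposition \eqref{pie_indicator} of the indicator for coinciding summation indices, which upgrades the naive power-counting $O_\prec(\Psi^d+1/N)$ to $O_\prec(\Psi^d+\im m/N+1/N^2)$ as in \eqref{improved_power_counting}. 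That step is the actual source of the $1/N^2$, and your sketch omits it.
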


 \begin{proof} [Proof of Lemma \ref{tau_F_lemma}]
 	We handle each of the cases for in the definition of $\Tau_F$. We begin by assuming $T^F \in \Tau_{F1}$, then we have four cases to handle: \begin{enumerate}[label=Case \arabic*:]
 		
 		\item If $p_h \geq 1$ we may perform one cumulant expansion on $T^F$ to get a linear combination of terms of the form \eqref{F_very_general_term} but with an additional factor of $1/N$. Then \eqref{ineq_tau_F_lemma} follows from \eqref{imm_lemma_F_eq} and the bound on $\EX[\im m]$, \eqref{imm_bound}.
 		
 		\item If $d_{\delta} \geq 1$, we may rewrite $T^F$ into another term with an additional factor of $1/N$ as in the previous case.
 		
 		\item Using \ref{imm_lemma_F_eq} and \eqref{imm_bound}, we obtain \begin{equation*}
 			\abs{T^F} \leq \frac{N^{1 + \epsilon/2}}{q^4} \left(2 C N^{-1/3 + \epsilon} + N^{-1}\right) \leq \frac{N^{2/3 + 2 \epsilon}}{q^4},
 		\end{equation*} for sufficiently large $N$. 
 		
 		\item To show this case we use Lemma \ref{prop_unmatched_term_bound_F} and Proposition \ref{imm_bound_random_edge_lemma}. Combined we obtain \begin{equation*}
 			\abs{T^F} \leq N^{d_N - 1 + \epsilon/2} \left(2 C N^{-1/3 + \epsilon} + \frac{1}{N}\right) \leq  N^{-1/3 + 2 \epsilon}.
 		\end{equation*}
 	\end{enumerate} Next we handle the case $T^F \in \Tau_{F2}$, which follows from essentially the same argument as the $\Tau_{G2}$-case in Lemma \ref{lemma_lemma_tau_g_ineq}. We have that $\abs{\chi(t)} \prec 1/(\sqrt{N} q)$, $\abs{\gamma_1} \leq CN^{-2/3 + \epsilon}$ and $\abs{\gamma_2} \leq CN^{-2/3 + \epsilon}$. Expanding the product of $\Dim$-factors as we obtain 
\begin{equation*}
 		\begin{split}
 			\abs{T^F} & \leq N^{\epsilon/2} \frac{N}{q^2} N^{-2/3 + \epsilon} \left(\EX[\im m(t, \widehat{L}_t + \gamma_{1} + \ii \eta )] + \EX[\im m(t, \widehat{L}_t + \gamma_{2} + \ii \eta )] + \frac{1}{N}\right).
 		\end{split}
 	\end{equation*} As in the previous cases we now use the bound on $\EX[\im m]$ (Proposition \ref{imm_bound_random_edge_lemma}) along with the assumption $q \geq N^{1/6 + \bd}$ to obtain \eqref{ineq_tau_F_lemma}. 
 \end{proof}

 \section{Implementation details}\label{sec:code}
 \label{implementation_details_section}

 As we explained in Sections \ref{perfect_cancellation_section} and \ref{perfect_cancellation_section_general_case}, one can find rational solutions to the linear equation systems \eqref{goal_linear_eq_sys} and (\ref{goal_linear_eq_sys_F}), showing that all the leading terms in the expansions of $\dm \EX[m(t, z(t))]$ and $\dF \EX[F(X(t))]$ cancel up to an error of $O(\Tau_G) + O_{\prec}(N^{-D})$ and $O(\Tau_F) + O_{\prec}(N^{-D})$, respectively. Also, we have explained how to generate the identities that were used to obtain the cancellations, but there are some crucial details in the implementation that considerably speeds up the running time. We only describe how to represent and manipulate terms of the form \eqref{F_very_general_term} with a general function $F$, since the set of terms on the form \eqref{very_general_term} has a trivial bijection with the set of terms of the form \eqref{F_very_general_term} with $i_0 = 1$. Further we only represent terms with $d_{\delta} = 0$, since all the terms with $d_{\delta} > 0$ used in this paper belongs to $\Tau_G$ or $\Tau_F$. All or the computations are done using Python except for the Gaussian elimination used for solving the linear systems \eqref{goal_linear_eq_sys} and \eqref{goal_linear_eq_sys_F}, which is implemented in C++. 
 \subsection{Representing general terms in (\ref{F_very_general_term})}
Recall the form \eqref{F_very_general_term}:
  	\begin{equation}
  	c \alpha(t) \frac{N^{d_N}}{q^{d_q} N^{\# \mathcal{I}}} \sum_{\mathcal{I}} \delta_{ab}^{d_{\delta}} \EX \left[h_{ab}^{p_h} F^{(i_0)}(X(t)) \prod_{i=1}^{i_0} \Dim \left(\prod_{l=1}^{n_i} G_{x_l^{(i)} y_l^{(i)}}\right)\right],
  	\label{F_very_general_term7}
  \end{equation}
with $i_0 \geq 1$, $d_N$, $d_q$, $d_{\delta}$, $n_i$, and $p_h$ being non-negative integers. We represent general terms of the above form as a class called \code{Term}. 
The defining data of a \code{Term}-object are stored in the fields \code{F\_derivative}, \code{prod\_dict}, \code{q\_deg}, \code{N\_deg}, \code{coeff}, and \code{h\_list}. 
Firstly, \code{F\_derivative}, \code{q\_deg}, and \code{N\_deg} are integers representing $i_0$, $d_q$, and $d_N$, respectively. Next, \code{prod\_dict} is a list of dictionaries, with each dictionary having sorted pairs of integers as keys and integers as values. Each of the dictionaries represents a product $\prod_{l=1}^{n_i}G_{x_l^{(i)}y_l^{(i)}}$, each of the summation indices in $\mathcal{I}$ are assigned a unique integer, and the key $(x_l^{(i)},y_l^{(i)})$ having value $p$ means that $G_{x_l^{(i)}y_l^{(i)}}$ occurs exactly $p$ times in the product $\prod_{l=1}^{n_i}G_{x_l^{(i)}y_l^{(i)}}$.  Moreover, the field \code{coeff} is an instance of a class we implemented named \code{CumulantPoly}, which represents a commutative polynomial. This commutative polynomial is used to represent $c \, \alpha(t)$. Finally, \code{h\_list} is a list of ordered pairs of integers, each of the pairs representing a $h$ in the product $h_{ab}^{p_h}$, using the same index mapping as \code{prod\_dict}. 
 
 Using the representation of a general term just described, it is straightforward to implement the differentiation rules \eqref{G_h_derivative}, \eqref{G_w_derivative}, \eqref{new_rules_1}, \eqref{X_t_h_derivative}, \eqref{X_t_w_derivative}. Also, as mentioned above, we may neglect all the terms containing a $\delta_{ab}$ arising from the differentiation rules since these will all belong to $\Tau_G$ or $\Tau_F$, respectively. 
 
 \begin{example}
	Consider the term \begin{equation*}
		3 e^{-t}(1-e^{-t}) \kappa_4 \frac{N}{q^2N^4}  \sum_{a,b,c,d} \EX \left[F^{(2)}(X(t))\Dim (G_{aa}^{2}G_{bb}^{2}G_{cd})\Dim (G_{cd})\right]. 
	\end{equation*} An instance of the class \code{Term} representing this term with the index mapping $(a, b, c, d) \mapsto (0, 1, 2, 3)$ has the following values in its fields:
	\begin{itemize}
		\item $\code{F\_derivative} = 2$,
		\item $\code{prod\_dict} = \bigg[\Big\{(0,0): 2, (1, 1) : 2, (2, 3) : 1\Big\}, \Big\{ (2, 3) : 1\Big\} \bigg]$
		\item $\code{q\_deg} = 2$,
		\item $\code{N\_deg} = 1$,
		\item $\code{coeff}  = \code{CumulantPoly}\big(3 \cdot \kappa_4 \, \e^{-t}(1 - \e^{-t})\big)$,
		\item \code{h\_list} = [ ].
	\end{itemize} 
	The list and dictionary notation used is standard for Python. The field \code{coeff} stores as instance of the class \code{CumulantPoly}. In this example, the instance of \code{CumulantPoly} is just a monomial in the variables $\kappa_4$ and $\e^{-t}(1 - \e^{-t})$, with coefficient 3. 
 \end{example}
 
 \begin{remark}
	Storing the field \code{coeff} as an instance of \code{CumulantPoly} is a bit unnecessary for the uses in this paper, since $c \, \alpha(t)$ will be $c \kappa_4 \e^{-t}(1 - \e^{-t}), c \in \mathbb{Q}$ for all the terms in our identities, but implementing a class for cumulative polynomials makes our program more capable of handling other problems. 
 \end{remark}
 
 \subsection{Equivalent terms}\label{sec:basis}
 
 The main implementational difficulty stems from the fact that if one permutes the summation indices in $\mathcal{I}$ of a general term as in \eqref{F_very_general_term7}, the value is invariant. Therefore, the terms we store and manipulate in our Python script should be thought of as representatives of equivalent classes. When we use the derivative rules \eqref{G_h_derivative} and \eqref{G_w_derivative} we will usually generate several terms with different representation in our script, but some of which are equivalent to each other. So after computing derivatives we need to group all of the equivalent terms together. To do this we require an effective way to check if two terms are equivalent or not, \ie if one may permute the indices in the first representation and obtain the second representation. This is a very non-trivial problem. For example, when $i_0 = 1$ in (\ref{F_very_general_term7}) this is equivalent to the graph isomorphism problem, for which no polynomial time algorithm is known \cite{graphIsomorphism}.
  However, we only deal with terms with a sufficiently small number of indices, and have a heuristic technique that makes our program run fast enough.
 
 The idea behind our algorithm for checking if two terms are equivalent is to generate an ``almost unique identifier'', \ie a permutation invariant hash value that is guaranteed to be the same for terms that in fact are equivalent but it should be quite rare for two terms that are not equivalent to have the same ``almost unique identifier''. Similar strategies can be used for the graph isomorphism problem. The tactic we use when constructing our identifier is to count the number of times indices occur together in the products $\prod_{l=1}^{n_i}G_{x_l^{(i)} y_l^{(i)}}$. More precisely, given a term $T$ of the form \eqref{F_very_general_term7} with $d_{\delta} = p_h = 0$ we define the \textit{almost unique identifier}, denoted $\auid(T)$ as the nested tuple of integers 
 \begin{equation}\label{auid}
	\auid(T) := \code{sort}\left(\code{sort}\left(\code{sort}\left(\left(\# \Big\{1 \leq i \leq n_i : \{v, v'\} = \big\{x_l^{(i)}, y_l^{(i)}\big\}\Big\}, n_i\right)\right)_{i=1}^{i_0}\right)_{v \in \mathcal{I}_T}\right)_{v' \in \mathcal{I}_T},
 \end{equation} 
where we used $\mathcal{I}_T$ to denote the set of summation indices $\mathcal{I}$ in (\ref{F_very_general_term7}).
The function $\code{sort}$ takes a tuple and returns the a sorted version of the tuple, using lexicographical ordering. Thanks to the sorting, we see that $\auid(T)$ is invariant under permutation of the indices, since it only depends on how often the indices occur together. Also, we see that $\auid(T)$ contains quite a lot of information about $T$, so at least intuitively it should be rare that two terms that are not equivalent get the same $\auid$. 
 
 Now we are ready to describe how we check whether two terms are equivalent. Assume we are given two terms $T$ and $S$ of the form \eqref{F_very_general_term7} with $d_{\delta} = p_h = 0$ and want to check if $T$ and $S$ are equivalent of not. We begin by comparing $\auid(T)$ with $\auid(S)$. If we have $\auid(T) \neq \auid(S)$ we can say for sure that $T$ and $S$ are not equivalent. On the other hand, if we have $\auid(T) = \auid(S)$ we need to perform further computations. This is handled in two different ways depending on the integer $i_0$ from (\ref{F_very_general_term7}). 
 
 \textbf{Case 1:} ($i_0 = 1$) Since $i_0 = 1$ our problem of checking whether $T$ and $S$ are equivalent is an instance of the Graph isomorphism problem. There has been extensive research on this problem with efficient algorithms. 
 In this paper we use the algorithm VF2++ introduced in \cite{JUTTNER201869}. The algorithm is available in the Python package NetworkX, introduced in \cite{networkx}. 
  
 \textbf{Case 2:} ($i_0 > 1$) This case is handled by recursively trying all mappings of the indices from $\mathcal{I}_T$ to the indices in $\mathcal{I}_S$ and comparing if we get the same \code{prod\_dict} (after sorting the list). However, when trying to map an index in $\mathcal{I}_T$ to another index in $\mathcal{I}_S$, we check that the indices occurs the same number of times in the products of Green function entries of $T$ and $S$, hence speeding up the process, by ruling out index mappings we know will fail early in the recursion. 

\subsubsection{\bf Adding and finding basis terms in $\Tau^{\mathrm{basis}}$}
\label{basis_term_implementation_section}
In order to build the linear equation system in \eqref{goal_linear_eq_sys_F} we need to compute the set of basis terms, $\Tau^{\mathrm{basis}}$. To do this we need a data structure to store terms and their equivalence classes. We also need fast lookup and insertion times. We implement the class \code{Ab\_term\_container} for this purpose. It works essentially in the same way as a hash table and we use $\auid$ as the hash-function. 

To store a new instance of \code{Term} in an instance of \code{Ab\_term\_container} we first check if this exact representation of the term is already stored in the container, if this is the case we do nothing more. If the exact representation is not already stored  we proceed to check if there is some other term already stored in the \code{Ab\_term\_container} with the same $\auid$ defined in (\ref{auid}) above. If not, we store the term under this $\auid$ in the instance of \code{Ab\_term\_container}. If there already are terms in the container with identical $\auid$ we check for equivalence between the term we wish to insert and each of the equivalence classes already stored with the same $\auid$. If we find an equivalent term already stored, we store that the representation of the term we inserted is equivalent to the equivalent class already stored. Else if we do not find an equivalent term already stored in the container, we store it as a new equivalence class with only one representative. 

To check if an instance of \code{Term} is stored in an instance of \code{Ab\_term\_container} we follow the same procedure as when inserting a term, as described above, except that we do not insert the term if we find it not being contained in the container. 

\subsection{Solving linear equation systems in rational numbers}\label{sec:solution}
As pointed out in Remark \ref{difficult_to_solve_lin_eq_remark}, solving the linear equation system in \eqref{goal_linear_eq_sys_F} needed for the proof of Lemma \ref{F_time_derivative_bound_lemma} turns out to be the most computationally demanding part of this paper. The problems arises from the fact that we need to find an exact solution for \eqref{goal_linear_eq_sys_F} in order for our proof to work. In practice this means finding a rational solution to \eqref{goal_linear_eq_sys_F}, since for a rational solution we can verify symbolically that the solution we find is an exact solution, whilst a floating point solution only is guaranteed to be an approximate solution. We note that crucially, the systems we need to solve are very sparse, since typically an identity includes at most 10 different terms. So each column in the coefficient matrices have around 10 non-zero entries. 
 
The available software for the type of problem we are faced with, \ie solving a sparse rectangular system over the rational numbers is limited. Most available exact solvers, such as SPEX left LU \cite{spexLU} only handle square systems of full rank. 

As is pointed out in \cite{spexLU}, it is usually best to pick the smallest possible pivot when using fractions, since this helps to keep the denominators small. 

For the small system ($110 \times 138$) we need to solve for the proof of Lemma \ref{time_derivative_bound_lemma} we search through the entire non-triangulated part of the matrix and pick the smallest pivot possible. Doing this we find a solution consisting entirely of integers. For the proof of Lemma \ref{F_time_derivative_bound_lemma}, the system we need to solve has the dimensions $13852 \times 14246$. For a $n \times m$ matrix Gaussian elimination requires $O(nm \min\{n, m\})$ arithmetic operations, which is too much with these dimensions. So we need to use the sparseness of the system to obtain a better time complexity. We resolve it as follows: when reducing the matrix to triangular form, most entries will already be zero, so for the rows beginning with zero we do not have to do anything. For the pivots we simply choose the first non-zero pivot we can find.

Our implementation is available at \cite{githubrepo}.

\appendix
\section{Proofs of several lemmas}
\label{proof_of_lemmas_appendix}
In Appendix \ref{proof_of_lemmas_appendix}, we present the proofs of several lemmas stated before.

\subsection{Proof of Lemma \ref{second_eigenval_estimate_lemma}}\label{app_lemma}

We start with the proof of Lemma \ref{second_eigenval_estimate_lemma}, using the same arguments as in the proof of \cite[Lemma 2.5]{Schnelli_Xu_2022}. 
\begin{proof} [Proof of Lemma \ref{second_eigenval_estimate_lemma}]
	The proof is identical to the proof of Lemma 2.5 in \cite{Schnelli_Xu_2022}. The only missing ingredient is an optimal local law for $\im m$ that holds for $\eta \gg N^{-1}$, \ie \begin{equation}
		\abs{\im m(E + \ii \eta) - \im \widetilde{m} (E + \ii \eta)} \prec \frac{1}{N \eta}
		\label{imm_local_law_eq}
	\end{equation}	holding uniformly for $\eta \gg N^{-1}$. In fact, \eqref{imm_local_law_eq} can be extended to all $E + \ii \eta \in S_0$ by following the arguments presented in Section 10 of \cite{benaychgeorges2018lectureslocalsemicirclelaw}: Pick the $\bc$ of Theorem \ref{optimal_local_law_sparse}  to be $< \epsilon/2$, then define $\eta_0$ through $N\eta_0 \sqrt{\kappa + \eta_0} = N^{\bc}$. From Theorem \ref{optimal_local_law_sparse} we have \eqref{imm_local_law_eq} for $\eta_0 \leq \eta \leq 5$. To extend it to $\eta > 0$, pick an $\eta < \eta_0$ and bound \begin{equation*}
		\begin{split}
			\abs{\im m(E + \ii \eta) - \im \widetilde{m} (E + \ii \eta)}
			\leq & \abs{\im m(E + \ii \eta) - \im m(E + \ii \eta_0)} \\
			& + \abs{\im m(E + \ii \eta_0) - \im \widetilde{m} (E + \ii \eta_0)} \\
			& + \abs{\im \widetilde{m} (E + \ii \eta_0) - \im \widetilde{m} (E + \ii \eta)}.
		\end{split}
	\end{equation*} The second term  on the right is $\prec 1/(N \eta_0)$ by Theorem \ref{optimal_local_law_sparse}, and the third is $\leq 2 N^{\bc}/(N \eta_0)$ with high probability, by \eqref{imm_hat_estimate}. The first term can be bounded as \begin{equation*}
		\begin{split}
			\abs{\im m(E + \ii \eta) - \im m(E + \ii \eta_0)} 
			& = \abs{\int_{\eta}^{\eta_0} \frac{\di}{\di y} \im m(E + \ii y) \di y} \\
			& = \abs{\int_{\eta}^{\eta_0} \im \frac{\ii}{N} \Tr G(E + \ii y)^2 \di y} \\
			& \leq \int_{\eta}^{\eta_0} \frac{y\im m(E + \ii y)}{y^2} \di y.
		\end{split} 
	\end{equation*} In the last inequality we used the Ward identity. Next we use that $y \mapsto y \im m(E + \ii y)$ is strictly increasing with $y$, so the last term can be bounded as \begin{equation*}
		\int_{\eta}^{\eta_0} \frac{\eta_0 \im m(E + \ii \eta_0)}{y^2} \di y \leq \frac{1}{\eta} \eta_0 \im m(E + \ii \eta_0).
	\end{equation*} But now we can use Theorem \ref{optimal_local_law_sparse} and \eqref{imm_hat_estimate} on this last term to bound it with high probability as \begin{equation*}
		\frac{\eta_0}{\eta} \left(\frac{N^{\epsilon/2}}{N \eta_0} + \frac{N^{\bc}}{N \eta_0}\right) \leq 2 \frac{N^{\epsilon/2}}{N \eta}.
	\end{equation*} This concludes the extension argument. 
	 
	 Using \eqref{imm_local_law_eq}, the edge rigidity \eqref{edge_rigidity}, and the bound in \eqref{total_eigenvalues_at_edge_bound} we can follow the same argument as in the proofs of Lemmas 2.4 and 2.5 in \cite{Schnelli_Xu_2022}.  
\end{proof}

\subsection{Proof of Lemmas \ref{cumulant_remainder_small_lemma_imm_case} and \ref{cumulant_remainder_small_lemma}}

	Next we will prove Lemma \ref{cumulant_remainder_small_lemma_imm_case} and Lemma \ref{cumulant_remainder_small_lemma} using the cumulant expansion formula in Lemma \ref{cumulant_expansion_lemma}. 
	
\begin{proof}[Proof of Lemmas \ref{cumulant_remainder_small_lemma_imm_case} and \ref{cumulant_remainder_small_lemma}]
	We follow the arguments from the proof of Lemma 3.1 in \nc\cite{Schnelli_Xu_2022}. The expansions \eqref{specific_cumulant_expansion_formula_imm_case} and \eqref{specific_cumulant_expansion_formula} follows directly from Lemma \ref{cumulant_expansion_lemma} since both the $f$ in \eqref{f_imm_case} and \eqref{f_general_F_case} are smooth for $\eta > 0$. So what we have to show is that the bound on the remainder term in \eqref{cumulant_expansion_error_bound} implies the bounds in \eqref{cumulant_remainder_small_eq_imm_case} and \eqref{cumulant_remainder_small_eq}. We will use the cutoff $M := q^{-3/4}$. All the inequalities and stochastic dominations in this proof are meant uniformly in $t \in [0, 10 \log N]$, $\eta \geq N^{-1 + \epsilon}$, and $\abs{x} \leq 1$. The first step to show is that \begin{equation}
		\sup_{\abs{h_{ab}} \leq q^{-3/4}} \left\{\max_{i,j} \abs{G_{ij}}\right\} \prec 1.
		\label{cumulant_sup_bound}
	\end{equation} Let $E^{kl}$ denote the matrix with a one at position $kl$, i.e.\ $E^{kl} := (\delta_{ki}\delta_{lj})_{i,j=1}^{N}$. Then we can define $G^{(ab)}$ as $G$ but with $h_{ab}$ set to zero. We can express $G^{(ab)}$ as \begin{equation*}
		G^{(ab)} = \left(H(t) - z(t) - \frac{\sqrt{1 - e^{-t}} h_{ab}}{1+\delta_{ab}}(E^{ab} + E^{ba})  + \frac{2(1 - e^{-t}) h_{ab}^2}{N(1 + \delta_{ab})} I \right)^{-1}.
	\end{equation*} From this we get \begin{equation*}
		G^{(ab)}_{ij} = G_{ij} + \left(G^{(ab)} \left( \frac{\sqrt{1 - e^{-t}} h_{ab}}{1 + \delta_{ab}} (E^{ab} + E^{ba}) - \frac{2(1 - e^{-t}) h_{ab}^2}{N(1 + \delta_{ab})} I\right) G\right)_{ij}. 
	\end{equation*} Now we can take the absolute value, use the triangle inequality, and take $\max_{i,j}$ on both sides to obtain \begin{equation}
		\max_{i,j} \abs{G_{ij}^{(ab)}} \leq \max_{i,j} \abs{G_{ij}} + 2 \abs{h_{ab}} \max_{i,j} \abs{G_{ij}^{(ab)}} \max_{i,j} \abs{G_{ij}} + 2 \abs{h_{ab}}^2 \max_{i,j} \abs{G_{ij}^{(ab)}} \max_{i,j} \abs{G_{ij}}.
		\label{max_G_ab_bound}
	\end{equation} From \eqref{time_dependent_local_law} we know that $\max_{i,j} \abs{G_{ij}} \prec 1$. By Assumption \ref{H_assumptions} we know that $\abs{h_{ab}} \prec 1/q$. So together with \eqref{max_G_ab_bound} we obtain that also $\max_{i,j} \abs{G_{ij}^{(ab)}} \prec 1$ holds. Similarly, we can now express $G_{ij}$ as \begin{equation*}
		G_{ij} = G^{(ab)}_{ij} - \left(G \left( \frac{\sqrt{1 - e^{-t}} h_{ab}}{1 + \delta_{ab}} (E_{ab} + E_{ba}) - \frac{2(1 - e^{-t}) h_{ab}^2}{N(1 + \delta_{ab})} I\right) G^{(ab)}\right)_{ij}. 
	\end{equation*} Again, we can take absolute values, use the triangle inequality, and take $\max_{i,j}$ on both sides. Then we also take $\sup_{\abs{h_{ab}} \leq q^{-3/4}}$ on both sides. Using that $G_{ij}^{(ab)}$ does not depend on $h_{ab}$ we then obtain \eqref{cumulant_sup_bound}.
	
	Next, we see from the differentiation rules \eqref{G_h_derivative}, \eqref{X_t_h_derivative}, and \eqref{Dim_commute_diff} that $f^{(l)}$ will be a linear combination of functions of the form \eqref{f_imm_case} or \eqref{f_general_F_case}, but with a time factor $\alpha(t)$ satisfying $0 \leq \alpha(t) \leq 1$ in front. Furthermore, we have that $\abs{h_{ab}} \prec 1/q$ and by assumption the derivatives of $F$ are bounded. So from \eqref{cumulant_sup_bound} we get that $\sup_{\abs{h_{ab}} \leq q^{-3/4}} \abs{f^{(l)}(h_{ab})} \prec 1$. From Assumption \ref{H_assumptions} we have that $\EX[h_{ab}^{l+1}] \prec 1 / (N q^{l-1})$. So we are done with bounding the first term in \eqref{cumulant_expansion_error_bound}. 
	
	The remaining step is to bound the second term in \eqref{cumulant_expansion_error_bound}, i.e.\ $\EX[\abs{h_{ab}}^{l+1} \indicator_{\abs{h_{ab}} > q^{-3/4}}] \norm{f^{(l)}}_{\infty}$. For $\norm{f^{(l)}}_{\infty}$ we use that $\max_{i, j} \abs{G_{ij}} \leq 1/\eta \leq N$. At most $l$ Green function entries are added to the products in \eqref{f_imm_case} and \eqref{f_general_F_case} when differentiating $l$ times. So if we denote $n = \sum_{i=1}^{i_0} n_i$ we have $\norm{f^{(l)}}_{\infty} \prec N^{n + l}$. For the other factor, $\EX[\abs{h_{ab}}^{l+1} \indicator_{\abs{h_{ab}} > q^{-3/4}}]$ we use Cauchy-Schwarz to bound \begin{equation*}
		\EX_{h_{ab}} \left[\abs{h_{ab}}^{l+1} 1_{\abs{h_{ab}} >q^{-3/4}}\right] \leq \sqrt{\EX_{h_{ab}} \left[\abs{h_{ab}}^{2(l+1)}\right] \EX_{h_{ab}}  \left[1_{\abs{h_{ab}} >q^{-3/4}}\right]} \leq \sqrt{\frac{C}{Nq^{2l}} \Prob \left(\abs{h_{ab}} > q^{-3/4}\right)}.
	\end{equation*}
	From $h_{ab} \prec 1/q$ we see that $\Prob(\abs{h_{ab}} > q^{-3/4}) \leq N^{-\Gamma}$ for any $\Gamma > 0$, for $N$ large enough. So by choosing $\Gamma = 2(n + l + 1)$ we get the bound $\EX[\abs{h_{ab}}^{l+1} \indicator_{\abs{h_{ab}} > q^{-3/4}}] \norm{f^{(l)}}_{\infty} \prec 1/ (N^{3/2}q^{l}) \prec 1/(N q^{l-1})$. So the proof is done. 
\end{proof}

\subsection{Proof of Lemmas \ref{lemma_general_imm_bound}--\ref{prop_unmatched_term_bound}}\label{sec:proof_unmatched}

Before we prove Lemmas \ref{lemma_general_imm_bound}--\ref{prop_unmatched_term_bound}, we begin by deriving a preliminary estimate on unmatched terms from Definition \ref{def_unmatch} (without taking the imaginary part). Comparing to the a priori bound in \eqref{naive_local_law_bound}, we gain a factor of $1/N$ for the unmatched terms. 

\begin{lemma}
	Fix any small $\xi>0$. Then	any unmatched term $T$ from Definition \ref{def_unmatch} in the general form of \eqref{very_general_term} satisfies
	\begin{equation*}
		\abs{T} \leq N^{\xi}\frac{N^{d_N - 1}}{q^{d_q}},
	\end{equation*} 
	with non-negative integers $d_N, d_q$ from \eqref{very_general_term}, uniformly in $z(t) \in \edge(t)$ and $t \in [0, 10 \log N]$. 
	\label{preliminary_unmatched_bound}
\end{lemma}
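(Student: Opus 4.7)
The plan is to adapt the iterative expansion strategy used to handle unmatched terms in Proposition 4.3 of \cite{Schnelli_Xu_2022} to the sparse setting, gaining the extra $1/N$ factor over the naive bound \eqref{naive_local_law_bound}. The starting observation is that since $v \in \mathcal{I}$ appears an odd number of times among the row/column indices of the product $\prod_i G_{x_i y_i}$, there is at least one distinguished Green function entry containing $v$, say $G_{v y_1}$ (possibly diagonal with $y_1=v$). Using the resolvent identity in the form
\begin{equation*}
z(t)\,G_{v y_1} \;=\; -\delta_{v y_1} + \sum_j h_{vj}(t)\, G_{j y_1},
\end{equation*}
together with $|z(t)|\asymp 1$ on $\edge(t)$, I would split $T$ into a Kronecker-delta contribution and a contribution involving a new summation over $j$ weighted by $h_{vj}(t)$.

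First, the $\delta_{v y_1}$ contribution forces $v=y_1$, eliminating one summation index. Combined with the prefactor $N^{d_N}/N^{\#\mathcal{I}}$, this immediately produces a term bounded by $N^\xi N^{d_N-1}/q^{d_q}$ via the entrywise local law \eqref{entries_small}, which is exactly the claimed estimate. Second, on the $h_{vj}(t)$ contribution I would apply the cumulant expansion formula of Lemma \ref{cumulant_remainder_small_lemma_imm_case} up to a sufficiently high order $L$, with truncation error $O_\prec(N/q^{L-1}) \leq O_\prec(N^{-D})$. The first non-vanishing cumulant is the second one ($1/N$), and after taking the derivative $\partial_{h_{vj}}$ via Lemma \ref{differentiation_lemma_imm_case} the resulting terms have the general form \eqref{very_general_term} with the same $d_N$, an extra summation index $j$, and a modified product of Green function entries. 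The higher cumulants $\kappa_{k+1}/(Nq^{k-1})$ with $k\ge 2$ bring in factors of $q^{-(k-1)}$, which under $q \geq N^{1/6+\bd}$ provide enough smallness that naive power counting via \eqref{naive_local_law_bound} closes these terms directly.

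For the second-cumulant contributions a parity bookkeeping argument would then drive the iteration: each application of $\partial_{h_{vj}}$ alters the occurrence count of $v$ and of the newly introduced index $j$ by two, so after one step the resulting term either (a) still has some unmatched index (in which case I reapply the procedure), or (b) all indices are now matched, but the degree $d$ has strictly increased, so that the extra off-diagonal Green entries produced by the derivative rule \eqref{G_h_derivative} provide enough smallness through $\Psi \leq 3N^{-\epsilon/2}$ to close the estimate. Terms carrying an extra factor $h_{ab}$ (case $p_h\geq 1$) or $\delta_{ab}$ arising along the way can be handled by one further cumulant expansion or by collapsing indices, each of which gains an additional $1/N$. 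Prefactor bookkeeping confirms that the $N^{d_N-1}/q^{d_q}$ scaling is preserved through each step.

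The main obstacle I anticipate is ensuring the iteration terminates uniformly: every step may generate several new terms and introduce additional summation indices, so one must bound the number of iterations before all terms fall either into the Kronecker-delta case, into sufficient degree to close by \eqref{naive_local_law_bound}, or into the high-cumulant case. Following the bookkeeping of \cite[Proposition 4.3]{Schnelli_Xu_2022}, the recursion depth is controlled by the initial number of Green function entries and the truncation order of the cumulant expansion, both of which are bounded independently of $N$; combined with the probabilistic $\prec$-absorption of $N^\xi$ factors at finitely many steps, this yields the uniform estimate $|T|\leq N^\xi N^{d_N-1}/q^{d_q}$ claimed in Lemma \ref{preliminary_unmatched_bound}.
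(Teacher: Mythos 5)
Your plan correctly identifies the target (gain a factor $1/N$ via iterative expansion of the unmatched index and cumulant expansion, then close with the naive local-law bound), but the key algebraic ingredient is missing, and this is a genuine gap. You expand $G_{vy_1}$ by the plain resolvent identity $z(t)G_{vy_1} = \sum_j h_{vj}(t)G_{jy_1} - \delta_{vy_1}$ and then cumulant-expand in $h_{vj}$. But at second order, when $\partial_{h_{vj}}$ hits $G_{jy_1}$, you pick up the contribution
\begin{equation*}
-\frac{1}{N}\sum_j G_{jj}\, G_{vy_1}\prod_{i\geq 2}G_{x_i y_i} = -\,\underline{G}\cdot G_{vy_1}\prod_{i\geq 2}G_{x_i y_i},
\end{equation*}
which has exactly the same degree $d$ as $T$, with $v$ still unmatched and $j$ matched. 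Your dichotomy ``(a) still unmatched — reapply'' or ``(b) matched with strictly larger degree'' puts this in case (a), but reapplying the same procedure to it simply reproduces the same term with one more $\underline{G}$ factor, so the iteration stalls and no $1/N$ is gained. The paper sidesteps this by using the identity $G_{ij} = \delta_{ij}\underline{G} + G_{ij}\underline{H(t)G} - \underline{G}(H(t)G)_{ij}$ (their \eqref{green_function_entry_expansion}), which generates \emph{two} degree-$d$ self-referencing pieces (the $D_{jk}$-derivative hitting $G_{kj}$ gives $-G_{kk}G_{jj}G_{v_1y_1}\prod$, while the $D_{v_1k}$-derivative hitting $G_{ky_1}$ gives $-G_{kk}G_{v_1y_1}G_{jj}\prod$) that cancel exactly. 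Without that cancellation you would instead have to close a self-consistent equation of the form $(1+m_{sc}/z)T = E$ using the local law to replace $\underline{G}$ by $m_{sc}$ and then invert $1+m_{sc}/z\approx 1/2$; you do not describe this, and your argument as written does not terminate.

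Two further points. (i) Since $v$ appears an odd number of times, one can always choose the expanded entry $G_{vy_1}$ with $y_1\neq v$, so the $\delta_{vy_1}$ contribution is identically zero; it does not ``eliminate one summation index.'' (ii) Even with the correct identity, the paper's iteration is finer than ``reapply until degree increases'': after the cancellation, some surviving $D_{v_1k}$-terms still have degree $d$, but the number of off-diagonal occurrences of $v_1$ decreases by two (their equations \eqref{v_1_moved_eq}--\eqref{next_iteration_start}); only after at most $n$ such inner steps does one reach $T = \sum T' + \tfrac{1}{q}\sum T' + O_\prec(1/N)$ with $T'$ of degree $\geq d+1$ (resp.\ $\geq d$). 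This two-level bookkeeping, with a \emph{fixed} finite number $D$ of outer iterations, is what lets the a priori bound \eqref{naive_local_law_bound} close the argument; your concern about uniformity is correctly flagged but is resolved precisely by this structure, not by a recursion of depth growing with $N$.
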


\begin{proof}[Proof of Lemma \ref{preliminary_unmatched_bound}]
	The proof is similar to the proof of \cite[Proposition 4.3]{Schnelli_Xu_2022}. The strategy consists of expanding an unmatched term into a linear combination of unmatched terms of higher degree. 
	Repeating this procedure sufficiently many times, the unmatched term one starts with can be written as a linear combination of terms with arbitrarily high degrees. In the final step we use the a priori bound \eqref{naive_local_law_bound} to estimate these terms with sufficiently high degrees. 
	
	For ease of notation we assume that $d_q = d_N = 0$, $c = 1$, and $\alpha(t) = 1$, and show that $\abs{T} \prec 1/N$. 
	We shall utilize the following identity to expand the Green function entries:
	 \begin{equation}
		G_{ij} = \delta_{ij} \underline{G} + G_{ij} \underline{H(t)G} - \underline{G}(H(t)G)_{ij},
		\label{green_function_entry_expansion}
	\end{equation} 
where we recall that $\underline{A}$ denotes the normalized trace of $A$.
	
	We assume that $T$ has degree $d$. Since $T$ is unmatched, there is some summation index, which we assume to be $v_1$, that is unmatched. Also there is some factor $G_{x_i y_i}$ such that $x_i = v_1$ and $y_i \neq v_1$. Assume without loss of generality that $i = 1$. We then have \begin{equation*}
		\begin{split}
			T	& = \frac{1}{N^{\# \mathcal{I}}} \sum_{\mathcal{I}} \EX \left[G_{v_1 y_1} \prod_{i=2}^n G_{x_i y_i}\right] = \frac{1}{N^{\# \mathcal{I}}} \sum_{\mathcal{I}} \EX \left[\delta_{v_1 y_1} m \prod_{i=2}^n G_{x_i y_i} \right] \\
			& \quad + \frac{1}{N^{\# \mathcal{I}}} \sum_{\mathcal{I}}  \EX \left[\underline{H(t)G} G_{v_1 y_1} \prod_{i=2}^n G_{x_i y_i} - \underline{G} (H(t)G)_{v_1 y_1} \prod_{i=2}^n G_{x_i y_i}\right] \\
			& = \frac{1}{N^{\# \mathcal{I}}} \sum_{\mathcal{I}}  \EX \left[\frac{1}{N} \sum_{j,k} h_{jk}(t) G_{kj} G_{v_1 y_1} \prod_{i=2}^n G_{x_i y_i}\right] \\
			& \quad - \frac{1}{N^{\# \mathcal{I}}} \sum_{\mathcal{I}}  \EX \left[\frac{1}{N} \sum_{j} G_{jj} \sum_{k} h_{v_1 k}(t) G_{k y_1} \prod_{i=2}^n G_{x_i y_i}\right] + O_{\prec}\left(\frac{1}{N}\right).
		\end{split}
	\end{equation*} Using the definition of $H(t)$ and cumulant expansions to order $l$ we obtain:
	\begin{equation}
		\begin{split}
			T = & \frac{1}{N^{\# \mathcal{I} + 2}} \sum_{\mathcal{I} \cup \{j, k\}}   \sum_{p+1 = 1}^{l} \sqrt{1-e^{-t}} \frac{\kappa_{p+1}}{q^{p-1}} \EX \left[\frac{\partial^p(G_{kj} G_{v_1 y_1} \prod_{i=2}^{n} G_{x_i y_i})}{\partial h_{jk}^p} \right] \\
			& \quad \, + \frac{1}{N^{\# \mathcal{I} + 2}} \sum_{\mathcal{I} \cup \{j, k\}} e^{-t/2} \EX \left[\frac{\partial ( G_{kj} G_{v_1 y_1} \prod_{i=2}^{n} G_{x_i y_i})}{\partial w_{jk}}\right] \\
			& - \frac{1}{N^{\# \mathcal{I} + 2}} \sum_{\mathcal{I} \cup \{j, k\}}  \sum_{p+1 = 1}^{l} \sqrt{1-e^{-t}} \frac{\kappa_{p+1}}{q^{p-1}} \EX \left[\frac{\partial^p (G_{jj} G_{k y_1} \prod_{i=2}^{n} G_{x_i y_i})} {\partial h_{v_1 k}^p} \right]  \\
			& \quad \, - \frac{1}{N^{\# \mathcal{I} + 2}} \sum_{\mathcal{I} \cup \{j, k\}}  e^{-t/2} \EX \left[\frac{\partial (G_{jj} G_{k y_1} \prod_{i=2}^{n} G_{x_i y_i})}{\partial w_{v_1 k}} \right] +  O_{\prec}\left(\frac{1}{q^{l-1}}\right) + O_{\prec}\left(\frac{1}{N}\right).
		\end{split}
		\label{unmatched_middle_expansion_eq}
	\end{equation} 
	Before continuing we ease notation by introducing the differential operator $D_{ab}$ acting on functions $f = f(t, z)$ through \begin{equation}
		D_{ab} f := \left(\frac{\partial}{\partial h_{ab}(t)} f(t, z)\right)\bigg|_{z = z(t)}.
		\label{eq_def_of_D}
	\end{equation} In other words, $D_{ab}$ is the partial derivative $ \partial/\partial h_{ab}(t)$ not acting on $z(t)$. We note how it acts on $G_{ij}$: \begin{equation}
		D_{ab} G_{ij} = -\Big(G_{ia}G_{bj} + G_{ib}G_{aj}\Big) + \delta_{ab} G_{ia}G_{bj}.
		\label{G_D_derivative}
	\end{equation}
	
	Shifting focus back to \eqref{unmatched_middle_expansion_eq} we can see that the term from $p+1 = 1$ is zero since $\kappa_1 = 0$. Also, from the differentiation rules \eqref{G_h_derivative} and \eqref{G_w_derivative}, and $\sqrt{1-e^{-t}}^2 + \left(e^{-t/2}\right)^2 = 1$, we can group the terms with $p+1 = 2$ and the ones with $w$-derivatives using the notation introduced in \eqref{eq_def_of_D} as follows:
	\begin{equation}
		\begin{split}
			T = & \frac{1}{N^{\# \mathcal{I} + 2}} \sum_{\mathcal{I} \cup \{j, k\}} \EX \left[ D_{jk} (G_{kj} G_{v_1 y_1} \prod_{i=2}^{n} G_{x_i y_i})\right] \\
			& \quad \, + \frac{1}{N^{\# \mathcal{I} + 2}} \sum_{\mathcal{I} \cup \{j, k\}}   \sum_{p+1 = 3}^{l} \sqrt{1-e^{-t}} \frac{\kappa_{p+1}}{q^{p-1}} \EX \left[\frac{\partial^p (G_{kj} G_{v_1 y_1} \prod_{i=2}^{n} G_{x_i y_i})}{\partial h_{jk}^p} \right]  \\
			& \quad \, - \frac{1}{N^{\# \mathcal{I} + 2}} \sum_{\mathcal{I} \cup \{j, k\}}  \EX \left[D_{v_1 k} (G_{jj} G_{k y_1} \prod_{i=2}^{n} G_{x_i y_i})\right] \\
			& \quad \, - \frac{1}{N^{\# \mathcal{I} + 2}} \sum_{\mathcal{I} \cup \{j, k\}}   \sum_{p+1 = 3}^{l} \sqrt{1-e^{-t}} \frac{\kappa_{p+1}}{q^{p-1}} \EX \left[\frac{\partial (G_{jj} G_{k y_1} \prod_{i=2}^{n} G_{x_i y_i})}{\partial h_{v_1 k}} \right] + O_{\prec}\left(\frac{1}{q^{l-1}}\right) + O_{\prec}\left(\frac{1}{N}\right).
		\end{split}
		\label{unmatched_expansion}
	\end{equation} By Assumption \ref{H_assumptions}, $q \geq N^{1/6 + \bd}$, so we can carry out the cumulant expansions to large enough order so that $q^{l-1} \geq N$, then the error term in \eqref{unmatched_expansion} will be $O_{\prec}(1/N)$. Also, note that the terms containing $h_{ab}$ coming from the derivative rule \eqref{G_h_derivative} for $p+1=2$ are estimated as $O_{\prec}(1/N)$, since these terms contain a $h_{ab}$ and may be further cumulant expanded, reducing $d_N$ by one. 
	
	Next we investigate what happens with the degrees of the terms in \eqref{unmatched_expansion}.  
	Using the derivative rules \eqref{G_D_derivative} and \eqref{G_h_derivative} one may expand the last expression in \eqref{unmatched_expansion} into a linear combination of terms of the form \eqref{very_general_term} with $d_N = 0$. The terms having $p_h \geq 1$ or $d_{\delta} \geq 1$, are estimated as $O_{\prec}(1/N)$. The first property we will show is that the terms appearing in the last expression in \eqref{unmatched_expansion} with $p_h = d_{\delta} = 0$ will have at least degree $d$. We consider what happens when applying $D_{ab}$ or $\partial / \partial h_{ab}$ successively, using \eqref{G_D_derivative} or \eqref{G_h_derivative}. One sees that the product of entries in the resulting terms arising from differentiating $G_{x_i y_i}$ can be written as \begin{equation}
		G_{x_i \beta_1} G_{\alpha_2 \beta_2} \cdots G_{\alpha_{p-1} \beta_{p-1}} G_{\alpha_p y_i},
		\label{differentiation_result}
	\end{equation} where the $\alpha_i$ and $\beta_i$ either are $a$ or $b$, or a fresh index originating from the third line of \eqref{G_h_derivative}.
	
	In the first and second lines of \eqref{unmatched_expansion} we have $(a, b) = (j, k)$. So, both of the factors $G_{x_i \beta_1}$ and $G_{\alpha_p y_i}$ in \eqref{differentiation_result} will be off-diagonal, since $j$ and $k$ are fresh indices. So we will always have at least $d$ off-diagonal entries (in fact, the only way to not increase the degree is for all derivatives to hit $G_{kj}$). 
	
	For the third and fourth lines of \eqref{unmatched_expansion} we have $(a, b) = (v_1, k)$ so it is possible to get $G_{x_i \beta_1}$ and $G_{\alpha_p y_i}$ in \eqref{differentiation_result} to be diagonal. However, if $x_i=y_i$ we cannot decrease the degree, and if $x_i \neq y_i$, we must have $x_i = \beta_1$ and $y_i = \alpha_p$ in order to decrease the degree, which means that both $x_i$ and $y_i$ must be either $v_1$, $k$, or a fresh index. But this cannot happen since $k$ is a fresh index and only appears in $G_{k y_1}$, and $y_1 \neq v_1$. So we have shown that all terms in \eqref{unmatched_expansion} will have at least degree $d$. 
	
	We also see that all the terms in the last expression of \eqref{unmatched_expansion} will again be unmatched. For the first and second line of \eqref{unmatched_expansion} we only add the fresh indices, so $v_1$ will remain unmatched. Further, for the third and fourth lines of \eqref{unmatched_expansion} we note that there must be at least another unmatched index besides $v_1$, that will remain unmatched after carrying out the differentiation, since we only add the index $v_1$ or fresh indices when differentiating. 
	
	All the terms in the sums with $p+1$ starting as 3 gets an additional factor of $1/q^{p-1}$. So we turn our focus to the leading order terms in \eqref{unmatched_expansion}, \ie the terms with the $D_{jk}$- and $D_{v_1 k}$-derivatives. 
	We wish to investigate what happens with the degrees. First, consider the terms with $D_{jk}$ in \eqref{unmatched_expansion}. As remarked earlier, if $D_{jk}$ hits $G_{x_i y_i}$ for some $i$, the degree is increased. And if $D_{jk}$ hits $G_{kj}$ we get two terms with factors $-G_{kk}G_{jj}$ and $-G_{kj}G_{kj}$ respectively. For the factor $G_{kj}G_{kj}$ we gain one degree, but for $G_{kk}G_{jj}$ the degree is still $d$. But looking at the terms from $D_{v_1 k}$ in \eqref{unmatched_expansion} and consider the case when $D_{v_1 k}$ hits $G_{ky_1}$ we get the two factors $-G_{kk}G_{v_1 y_1}$ and $-G_{k v_1}G_{k y_1}$. So we see that the term with the factor $-G_{kk}G_{v_1 y_1}$ will exactly cancel the term from $G_{kk}G_{jj}$ in the first part of \eqref{unmatched_expansion}. So we see that all terms coming from $D_{jk}$ in \eqref{unmatched_expansion} have at least degree $d+1$. 
	
	Now we turn our focus to the third line of \eqref{unmatched_expansion}, \ie the terms coming from $D_{v_1 k}$. If  $D_{v_1 k}$ hits $G_{jj}$ the degree is increased (by 2). The case when we hit $G_{ky_1}$ was handled in the last paragraph: One term gets canceled, and in $G_{k v_1}G_{k y_1}$ the degree has increased by one. Now we consider what happens if $D_{v_1 k}$ hits $G_{x_i y_i}$ for $2 \leq i \leq n$. Then we get $-(G_{x_i v_1} G_{k y_i} + G_{x_i k} G_{v_1 y_i})$. If both $x_i \neq v_1$ and $y_i \neq v_1$ we see that the degree is increased at least by one. Also, if both $x_i = y_i = v_1$, the degree is increased by one. But, if exactly one of $x_i$ and $y_i$ equals $v_1$, assume without loss of generality $x_i = v_1$ and $y_i \neq v_1$, we get $-(G_{v_1 v_1} G_{k y_i} + G_{v_1 k} G_{v_1 y_i})$. The term coming from $ G_{v_1 k} G_{v_1 y_i}$ will have the degree increased by one, but the term from $G_{v_1 v_1} G_{k y_i}$ will still have degree $d$. We investigate these types of terms further. For each such term we can reorder the $G_{x_i y_i}$ and assume that the $y_i \neq v_1$ we had before is $y_2$. So the term can be written as
	\begin{equation}
		\frac{1}{N^{\# \mathcal{I} + 2}} \sum_{\mathcal{I} \cup \{j, k\}} \EX \left[G_{jj} G_{v_1 v_1} G_{k y_1} G_{k y_2} \prod_{i=3}^n G_{x_i y_i}\right].
		\label{v_1_moved_eq}
	\end{equation} If we compare the occurrences of $v_1$ in \eqref{v_1_moved_eq} to the term $T$ we started expanding, we see that two $v_1$ have been moved from $G_{v_1 y_1} G_{v_1 y_2}$ to $G_{v_1 v_1}$. In particular this means that $v_1$ is still unmatched. So we may assume that $G_{x_3 y_3} = G_{v_1 y_3}$ with $y_3 \neq v_1$. So we have the expression \begin{equation}
		\frac{1}{N^{\# \mathcal{I} + 2}} \sum_{\mathcal{I} \cup \{j, k\}} \EX \left[G_{v_1 y_3} G_{jj} G_{v_1 v_1} G_{k y_1} G_{k y_2} \prod_{i=4}^n G_{x_i y_i}\right].
		\label{next_iteration_start}
	\end{equation} This term is again unmatched and most importantly it has the property that $v_1$ appears two times less as an index in off-diagonal terms in the product of Green function entries, compared to $T$. This means that we can carry out the same expansions as we did to obtain \eqref{unmatched_expansion}, still expanding in $v_1$, and we will end up with a sum consisting of terms of degree $\geq d+1$, terms of degree $\geq d$ with at least an additional factor of $1/q$, and terms of degree $d$ with $v_1$ still unmatched, and appearing two times less in off-diagonal entries, also we again get an error of $O_{\prec}(1/N)$. Repeating this procedure at most $n$ times, the terms with degree $d$ and no $1/q$-factors will have only one $v_1$ appearing in an off-diagonal entry. By the above argument we then see that  by expanding this term once more, the degree of all resulting terms will be at least $d+1$ or degree $d$ and contain an additional factor of $1/q$.

	What we have shown is that we can write \begin{equation}
		T  = \sum_{\substack{T' \in {\Tau'}_{d+1}^o}} T' + \frac{1}{q} \sum_{\substack{T' \in {\Tau'}_{d}^o}} T' + O_{\prec} \left(\frac{1}{N}\right),
		\label{expansion_step}
	\end{equation} 
	where ${\Tau'}_{d+1}^o$ is a finite subset of
	$\Tau_{d+1}^{o}$, and
	${\Tau'}_{d}^o$ is a finite subset of
	$\Tau_{d}^{o}$. But now we can use \eqref{expansion_step} to recursively expand the terms in the right hand side of \eqref{expansion_step}. In each step gaining either one degree or a factor of $1/q$. Repeating the expansion in \eqref{expansion_step} $D$ times we get \begin{equation}
		T = \sum_{s=0}^D \frac{1}{q^{D-s}} \sum_{\substack{T' \in {\Tau'}_{d+s}^o}} T' + O_{\prec} \left(\frac{1}{N}\right),
		\label{iterater_expansion}
	\end{equation} where ${\Tau'}_{d+s}^o$ denotes a finite subset of $\Tau_{d+s}^o$. Using the a priori local law bound, \eqref{naive_local_law_bound} we get \begin{equation*}
		\abs{T} \prec \sum_{s=0}^D \frac{1}{q^{D-s}} \left(\Psi^{d+s} + \frac{1}{N} \right) + \frac{1}{N} \prec \sum_{s=0}^D N^{- \frac{1}{6}(D-s)} N^{- \epsilon(d+s)/2} +  \frac{1}{N} \prec N^{- \epsilon (D+d)/2} + \frac{1}{N}.
	\end{equation*} Now, choosing $D$ large enough for $- (\epsilon/2) (D +d) \leq -1$ to hold, we get that $\abs{T} \prec 1/N$. By noting that the local law bound \eqref{naive_local_law_bound} holds uniformly in $z(t) \in \edge(t)$ and $t \in [0, 10\log N]$, the proof of Lemma \ref{preliminary_unmatched_bound} is finished.
\end{proof}

 The next step is to improve the bound from Lemma \ref{preliminary_unmatched_bound} by taking the imaginary part of the unmatched term $T$ and hence prove Lemma \ref{prop_unmatched_term_bound}. To accomplish this, we will carefully examine where the error terms $O_{\prec}(1/N)$ in the proof of Lemma \ref{preliminary_unmatched_bound} originate, and show that when taking the imaginary part, they can be bounded effectively using Lemma \ref{lemma_general_imm_bound}.

In the following, we will first present the proof of Lemma \ref{lemma_general_imm_bound}, and then use it and Lemma \ref{preliminary_unmatched_bound} to complete the proof of Lemma \ref{prop_unmatched_term_bound}.

	\begin{proof}[Proof of Lemma \ref{lemma_general_imm_bound}]
	Consider a term	$T$ on the form \eqref{very_general_term}. It suffices to show $\abs{\im T} \prec \EX \left[\im m \right] + \frac{1}{N}$, for $d_N = d_q = 0$. First we deal with the cases where $d_\delta \geq 1$ or $p_h \geq 1$. If $d_{\delta} \geq 1$ we can use $a=b$ to rewrite $T$ into another term with $d_N$ decreased by one as in \eqref{example} and bound it by $O_{\prec}(1/N)$ using \eqref{naive_local_law_bound}. If $p_h \geq 1$ we can expand $T$ via $h_{ab}$ into terms with $d_N$ decreased by one as in \eqref{111}, and again use \eqref{naive_local_law_bound} to bound the resulting terms as $O_{\prec}(1/N)$. In the remainder of the proof we deal with the case $d_\delta = p_h = 0$.
	
	We now split the proof in three cases, depending on how many off-diagonal entries $T$ has. Assume without loss of generality that $c = 1$.
	
	\textbf{Case 1:} First we consider $T$ with at least two off-diagonal entries. Assume that they are $G_{x_1 y_1}$ and $G_{x_2 y_2}$. Then we can use the triangle inequality, the Cauchy-Schwarz inequality and the Ward identity in the
	following way: \begin{equation}
		\begin{split}
			\abs{\im T} & = \abs{\im \frac{1}{N^{\# \mathcal{I}}} \sum_{\mathcal{I}} \EX \left[\prod_{i=1}^{n} G_{x_i y_i}\right]} \\
			& \leq \frac{1}{N^{\# \mathcal{I}}} \sum_{\mathcal{I}}  \EX \left[\prod_{i=1}^{n} \abs{G_{x_i y_i}}\right] \\
			& \leq \frac{1}{N^{\# \mathcal{I}}} \EX \left[\left(\max_{1 \leq i,j \leq N} \abs{G_{ij}} \right)^{n-2} \sum_{\mathcal{I}}  \abs{G_{x_1 y_1}} \abs{G_{x_2 y_2}}\right] \\
			& \leq \frac{1}{N^{\# \mathcal{I}}} \EX \left[\left(\max_{1 \leq i,j \leq N} \abs{G_{ij}} \right)^{n-2} \sqrt{\sum_{\mathcal{I}}  \abs{G_{x_1 y_1}}^2} \sqrt{\sum_{\mathcal{I}}  \abs{G_{x_2 y_2}}^2} \right] \\
			& = \frac{1}{N^{\# \mathcal{I}}} \EX \left[\left(\max_{1 \leq i,j \leq N} \abs{G_{ij}} \right)^{n-2} N^{\# \mathcal{I}} \frac{\im m}{N \eta} \right] \\
			& \prec \frac{\EX \left[\im m\right]}{N \eta}.
		\end{split}
		\label{lemma_first_case}
	\end{equation} In the last step we used that $\max_{1 \leq i,j \leq N} \abs{G_{ij}} \prec 1$, which follows from the local law.
	
	\textbf{Case 2:} Next we treat the case of no off-diagonals. For this case we proceed in a similar manner, but instead of the Ward identity we use the following inequality: \begin{equation}
		\begin{split}
			\abs{\im \prod_{i=1}^{n} G_{x_i x_i}} & \leq 2^{n-1} \sum_{i=1}^{n} \abs{\im G_{x_i x_i}} \left(\max_{k} \abs{G_{kk}}\right)^{n-1} ,
		\end{split}
		\label{im_inequality}
	\end{equation} which follows directly from considering the following expression for $\prod_{i=1}^{n} G_{x_i x_i}$: \begin{equation}
		\left(\re G_{x_1 x_1} + \ii \im  G_{x_1 x_1}\right) \left(\re G_{x_2 x_2} + \ii \im  G_{x_2 x_2}\right) \cdots \left(\re G_{x_n x_n} + \ii \im  G_{x_n x_n}\right).
		\label{product_expression}
	\end{equation} We will get $2^n$ terms from expanding \eqref{product_expression}, and each of them containing an $\ii$ must contain an $\im G_{x_k x_k}$, and there are exactly $2^{n-1}$ terms containing $\im G_{x_k x_k}$. We can now carry out the calculations similar to \eqref{lemma_first_case}, \begin{equation}
		\begin{split}
			\abs{\im \EX [T]} & = \abs{\im \frac{1}{N^{\# \mathcal{I}}} \sum_{\mathcal{I}} \EX \left[\prod_{i=1}^{n} G_{x_i y_i}\right]} \\
			& \leq \frac{1}{N^{\# \mathcal{I}}} \sum_{\mathcal{I}}  \EX \left[\abs{ \im \prod_{i=1}^{n} G_{x_i y_i}}\right] \\
			& \leq \frac{2^{n-1}}{N^{\# \mathcal{I}}} \EX \left[\left(\max_{1 \leq k \leq N} \abs{G_{kk}} \right)^{n-1} \sum_{k=i}^{n} \sum_{\mathcal{I}} \im G_{x_i x_i}\right] \\
			& = \frac{2^{n-1}}{N^{\# \mathcal{I}}} \EX \left[\left(\max_{1 \leq k \leq N} \abs{G_{kk}} \right)^{n-1} n N^{\# \mathcal{I}} \im m\right] \\
			& \prec \EX \left[\im m\right].
		\end{split}
	\end{equation}
	
	\textbf{Case 3:} In this case we have exactly one off-diagonal entry. But this implies that $T$ must be unmatched, so by the preliminary bound, Lemma \ref{preliminary_unmatched_bound}, we have that $\abs{T} \prec 1/N$. And of course we have $\abs{\im T} \leq \abs{T}$.
	
	Note that all the bounds used above hold uniformly in $z(t) \in \edge(t)$ and $t \in [0, 10 \log N]$. This proves Lemma \ref{lemma_general_imm_bound}.
\end{proof}

	\begin{proof}[Proof of Lemma \ref{prop_unmatched_term_bound}]
	Consider again the expansion of $T$ leading to \eqref{unmatched_expansion}, and take the imaginary part on both sides. %
	 In \eqref{unmatched_expansion}, the $O_{\prec}(1/N)$ error term comes from $$\frac{1}{N^{\# \mathcal{I}}} \sum_{\mathcal{I}} c \EX \left[\delta_{v_1 y_1} G_{v_1 y_1} \prod_{i=2}^n G_{x_i y_i} \right],$$ or $$\frac{1}{N^{\# \mathcal{I}}} \sum_{\mathcal{I}} c \EX \left[h_{v_1 y_1} G_{v_1 y_1} \prod_{i=2}^n G_{x_i y_i} \right].$$ 
	However, as argued earlier, both of these terms can be rewritten as a sum of terms of the form \eqref{very_general_term} with $d_N$ reduced by 1. Hence we can use Lemma \ref{lemma_general_imm_bound} to improve the error term to $O_{\prec}(1/N \cdot (\im m + 1/N)) = O_{\prec}(\im m /N + 1/N^2)$. In \eqref{unmatched_expansion} we also have the error term $O_{\prec}(1/q^{l-1})$. But we can easily handle this by choosing $l$ large enough for $q^{l-1} \geq N^2$ to hold. All in all, we have showed that if we add an $\im$ to the expansion computations leading to \eqref{unmatched_expansion} the final error term in \eqref{unmatched_expansion} can be improved to $O_{\prec}(\frac{\im m}{N} + \frac{1}{N^2})$. 
	
	With the new error term and the additional $\im$, \eqref{iterater_expansion} becomes \begin{equation}
		\im T = \sum_{s=0}^D \frac{1}{q^{D-s}} \sum_{\substack{T_{d_s'}^o \in \Tau_{d_s'}^o \\ d_s' \geq d+s}} \im T_{d_s'}^o + O_{\prec} \left(\frac{\im m}{N} + \frac{1}{N^2}\right).
		\label{im_expansion_improved_error}
	\end{equation} 
	
	We also need to improve the naive power counting bound $O_{\prec}(\Psi^d + 1/N)$ we used before, which can be done in a similar manner in which we improved the error term in \eqref{unmatched_expansion}. Consider a general term as in \eqref{very_general_term} with $d_{\delta} = p_h = d_N = d_q = 0$ of degree $d$, and denote it by $T_d$. We then split the sum into two, the first for the case where all indices take on distinct values, and the second were at least two indices take the same value. 
	
	We introduce an indicator function $\mathbbm{1}_{\sim}(\mathcal{I})$ defined on $[1, N]^{\# \mathcal{I}}$, that is one exactly if some pair of indices have taken the same value and zero otherwise. Using the principle of inclusion and exclusion we can write it in terms of Kronecker deltas: \begin{equation}
		\mathbbm{1}_{\sim}(i_1, i_2, \ldots, i_m) = \sum_{p=1}^{\binom{m}{2}} (-1)^{p+1} \sum_{(e_1, e_2, \ldots, e_p) \subset \mathcal{E}} \delta_{e_1} \delta_{e_2} \cdots \delta_{e_p},
		\label{pie_indicator}
	\end{equation} where $\mathcal{E}$ is the edge set of the complete graph having the summation indices $i_j$ as vertices, so $\delta_e$ is one when the two indices corresponding to the vertices at the endpoints of the edge $e$ are equal, and zero otherwise. This expression is convenient for us, since a Kronecker delta in two different indices corresponds to replacing all occurrences of one of the indices with the other one, thus effectively reducing the size of $\mathcal{I}$ by one, and again obtaining an expression of the form \eqref{very_general_term} but with an additional factor of $1/N$, \ie $d_N$ is reduced by one. We are now ready to write down the improved power counting: \begin{equation}
		\begin{split}
			\abs{\im T} & \leq \abs{\im \frac{c}{N^{\# \mathcal{I}}} \sum_{\mathcal{I}} (1 - \mathbbm{1}_{\sim}(\mathcal{I})) \EX \left[\prod_{i=1}^{n} G_{x_i y_i}\right] } + \abs{\im \frac{c}{N^{\# \mathcal{I}}} \sum_{\mathcal{I}} \mathbbm{1}_{\sim}(\mathcal{I}) \EX \left[\prod_{i=1}^{n} G_{x_i y_i}\right] } \\
			& \leq \frac{\abs{c}}{N^{\# \mathcal{I}}} \sum_{\mathcal{I}} \EX \left[ \left(\max_{ i \neq j} \abs{G_{ij}} \right)^{d} \left(\max_{ i, j} \abs{G_{ij}} \right)^{n-d}\right] \\
			& \quad + \sum_{p=1}^{\binom{m}{2}} \sum_{(e_1, e_2, \ldots, e_p) \subset \mathcal{E}} \abs{ \im  \frac{1}{N^{\# \mathcal{I}}} \sum_{\mathcal{I}} c \delta_{e_1} \delta_{e_2} \cdots \delta_{e_p} \EX \left[\prod_{i=1}^{n} G_{x_i y_i}\right] } \\
			& \prec \Psi^d + \frac{\im m}{N} + \frac{1}{N^2},
		\end{split}
		\label{improved_power_counting}
	\end{equation} where in the last domination we used Lemma \ref{lemma_general_imm_bound} and that we gain at least one factor of $1/N$ for a product of Kronecker deltas but still have a term on the form \eqref{very_general_term}. Finally, using \eqref{improved_power_counting} on \eqref{im_expansion_improved_error} we obtain \eqref{unmatched_improved_bound}, if we choose $D$ large enough. This finishes the proof of Lemma \ref{prop_unmatched_term_bound}.
\end{proof}

\subsection{Proof of Lemmas \ref{lemma_F_imm_bound} -- \ref{prop_unmatched_term_bound_F}}
\label{appendix_F_proofs}
\begin{proof}[Proof of Lemma \ref{lemma_F_imm_bound}]
	We begin by noting the following a priori bound for a term $T^F$ of the form \eqref{F_very_general_term}:  
	\begin{equation}
		\abs{T^F} \prec \frac{N^{d_N}}{q^{d_q}}.
		\label{naive_term_estimate}
	\end{equation} It follows from the fact that $\max_{i,j}\abs{G_{ij}(t, \widehat{L}_t + \gamma_{k} + \ii \eta)} \prec 1$, for $k = 1, 2$, when $\eta \geq N^{-1 + \epsilon}$ (which follows from the local law estimates (\ref{time_dependent_local_law})), and that $\abs{h_{ab}} \prec 1$. 
	
	As in the proof of Lemma \ref{lemma_general_imm_bound}, if $p_h \geq 1$ or $d_{\delta} \geq 1$, we may expand/rewrite the term into terms with $d_N$ decreased by one and use \eqref{naive_term_estimate} to obtain \eqref{imm_lemma_F_eq}. The case $d_{\delta} = p_h = 0$ follows by multiplying out the products in \eqref{F_very_general_term} and using the same cases as in the proof of Lemma \ref{lemma_general_imm_bound}.\begin{equation}
		\begin{split}
			& \quad \, \abs{\alpha(t) \frac{N^{d_N}}{q^{d_q} N^{\# \mathcal{I}}} c \sum_{\mathcal{I}} \EX \left[F^{(i_0)}(X(t)) \prod_{i=1}^{i_0} \Dim \left(\prod_{l=1}^{n_i} G_{x_l^{(i)} y_l^{(i)}}\right)\right]} \\
			& \leq \frac{N^{d_N}}{q^{d_q} N^{\# \mathcal{I}}} c \sum_{\mathcal{I}} \EX \left[\abs{F^{(i_0)}(X(t))} \sum_{\sigma \in \{1, 2\}^{i_0}}  \abs{\prod_{i=1}^{i_0} \im \left(\prod_{l=1}^{n_i} G_{x_l^{(i)} y_l^{(i)}}\left(t, \widehat{L}_t +  \gamma_{\sigma(i)} + \ii \eta\right)\right)}\right] \\
			& \leq \frac{N^{d_N}}{q^{d_q} N^{\# \mathcal{I}}} C \sum_{\mathcal{I}} \sum_{\sigma \in \{1, 2\}^{i_0}}  \EX \left[\abs{\prod_{i=1}^{i_0} \im \left(\prod_{l=1}^{n_i} G_{x_l^{(i)} y_l^{(i)}}\left(t, \widehat{L}_t +  \gamma_{\sigma(i)} + \ii \eta\right)\right)}\right].
		\end{split}	
		\label{F_general_term_expand_bound}
	\end{equation} The notation $\sum_{\sigma \in \{1, 2\}^{i_0}}$ means that we sum over all functions $\sigma : \{1, 2, \ldots, i_0\} \to \{1, 2\}$. In the last inequality of \eqref{F_general_term_expand_bound} we used that $F$ has uniformly bounded derivatives. We now see that for the last expression we may use the same methods as used in the proof of Lemma \ref{lemma_general_imm_bound}. To handle terms of degree $\geq 2$ we can use the AM-GM inequality and Ward identity, even on products of Green function entries evaluated in different $z$ since \begin{equation*}
		\begin{multlined}
			\abs{G_{x_1^{(1)}y_1^{(1)}}(t, \widehat{L}_t + \gamma_{\sigma(1)} + \ii \eta) G_{x_1^{(2)}y_1^{(2)}} (t, \widehat{L}_t + \gamma_{\sigma(2)} + \ii \eta)} \\ \leq \frac{1}{2} \left(\abs{G_{x_1^{(1)}y_1^{(1)}}(t, \widehat{L}_t + \gamma_{\sigma(1)} + \ii \eta)}^2 + \abs{G_{x_1^{(2)}y_1^{(2)}} (t, \widehat{L}_t + \gamma_{\sigma(2)} + \ii \eta)}^2\right).
		\end{multlined} 
	\end{equation*} 
	
	The proof for terms of degree 0 follows directly from \eqref{F_general_term_expand_bound}, that $\abs{m(t, \widehat{L}_t + \gamma_{j} + \ii \eta)} \prec 1$ for $j= 1, 2$, and from the inequality in \eqref{im_inequality}. 
	
	To handle terms of degree 1 we also proceed in a similar way as before. If a term has degree 1 it must be unmatched, and we can in analogy to Lemma \ref{preliminary_unmatched_bound} show that unmatched terms $T^F$ of the form \eqref{F_very_general_term} satisfy $\abs{T^F} \prec N^{d^N - 1} / q^{d_q}$. The strategy to show this is exactly the same as in the proof of Lemma \ref{preliminary_unmatched_bound}, we motivate further below in the proof of Lemma \ref{prop_unmatched_term_bound_F}. \end{proof}

\begin{proof}[Proof of Lemma \ref{prop_unmatched_term_bound_F}]
	We can follow the same strategy used to show the previous bound on unmatched terms, Lemma \ref{prop_unmatched_term_bound}. We omit the details, but briefly motivate why the same strategy works. We may use the identity in \eqref{green_function_entry_expansion} to expand an off-diagonal entry in the $\Dim$-factors, and since $\partial/\partial h_{ab}$ and $\Dim$ commute (as well as $\partial/\partial w_{ab}$ and $\Dim$), the same arguments regarding the degrees of terms in the expansion and the index $v_1$ work for terms of the form \eqref{F_very_general_term}. Using this we can derive the bound $\abs{T^F} \prec N^{d_N - 1}/q^{d_q}$, as we mentioned in the proof of Lemma \ref{lemma_F_imm_bound}. 
	
	Finally, we can repeat the last argument in the proof of Lemma \ref{prop_unmatched_term_bound} to improve the bound from $\abs{T^F} \prec N^{d_N - 1}/q^{d_q}$ to \eqref{prop_unmatched_term_bound_F_eq}. In particular we see from \eqref{F_general_term_expand_bound} that for terms $T^F$ with degree $d \geq 2$ of the form \eqref{F_very_general_term} with $d_{\delta} = p_h = 0$ we may use the computations in \eqref{ward_identity_bound} to bound $T^F$ as \begin{equation}
		\begin{multlined}
			\abs{T^F} \prec \frac{N^{d_N}}{q^{d_q}} \Bigg(\left(\frac{N^{-(d-2)\epsilon/2}}{N\eta}\right) \EX[\im m(t, \widehat{L}_t + \gamma_1 + \ii \eta) + \im m(t, \widehat{L}_t + \gamma_2 + \ii \eta)] \\ + \frac{\EX[\im m(t, \widehat{L}_t + \gamma_1 + \ii \eta) + \im m(t, \widehat{L}_t + \gamma_2 + \ii \eta)]}{N} + \frac{1}{N^2}\Bigg).
		\end{multlined}
		\label{F_ward_bound}
	\end{equation} From \eqref{F_ward_bound} we obtain the bound in Lemma \ref{prop_unmatched_term_bound_F} for terms of sufficiently large degree. 
\end{proof}
 
\section{Some examples for deriving \eqref{list_of_identities} and solving (\ref{goal_linear_eq_sys})}
\label{details_about_imm_system}

 In order for help the reader learn the method to derive (\ref{list_of_identities}) and solve (\ref{goal_linear_eq_sys}), we write out some terms in $\Tau^{\mathrm{basis}}$ as examples, as well as several identities listed in (\ref{list_of_identities}).  

The first six basis terms in $\Tau^{\mathrm{basis}}$ generated by the method described in Section \ref{basis_term_implementation_section} are the following: 
\begin{equation*}
	\begin{split}
		& \e^{-t}(1 - \e^{-t}) \frac{N}{q^2N^5}  \sum_{a,b,c,d,e} \EX \left[G_{aa}G_{ac}G_{ae}G_{bb}^{2}G_{cd}G_{de}\right], \\
		& \e^{-t}(1 - \e^{-t}) \frac{N}{q^2N^5}  \sum_{a,b,c,d,e} \EX \left[G_{aa}^{2}G_{bb}^{2}G_{cc}G_{de}^{2}\right], \\
		& \e^{-t}(1 - \e^{-t}) \frac{N}{q^2N^5}  \sum_{a,b,c,d,e} \EX \left[G_{aa}^{2}G_{bb}^{2}G_{cd}G_{ce}G_{de}\right], \\
		& \e^{-t}(1 - \e^{-t}) \frac{N}{q^2N^4}  \sum_{a,b,c,d} \EX \left[G_{aa}^{2}G_{bb}^{2}G_{cd}^{2}\right], \\
		& \e^{-t}(1 - \e^{-t}) \frac{N}{q^2N^3}  \sum_{a,b,c} \EX \left[G_{aa}^{2}G_{bb}G_{bc}^{2}\right], \\
		& \e^{-t}(1 - \e^{-t}) \frac{N}{q^2N^3}  \sum_{a,b,c} \EX \left[G_{aa}G_{ab}G_{ac}G_{bb}G_{bc}\right].
	\end{split}
\end{equation*}

The third column of the coefficient matrix $\widetilde{C}=\left(\widetilde{c}_{ml}\right)_{1 \leq \beta \leq M,1 \leq l \leq L}$ has six non-zero entries at rows $6, 10, 11, 13, 15, 44$. And the entries at those positions are: $-1, -5/2, -1, -1/2, -1/2, -1/2$. Written out in actual terms, the identity stored in the third column of $\widetilde{C}$ is given by
\begin{equation}
	\begin{split}
		0  = & -1 \e^{-t}(1 - \e^{-t}) \frac{N}{q^2N^3}  \sum_{a,b,c} \EX \left[G_{aa}G_{ab}G_{ac}G_{bb}G_{bc}\right] \\
		&-5/2 \e^{-t}(1 - \e^{-t}) \frac{N}{q^2N^4}  \sum_{a,b,c,d} \EX \left[G_{aa}G_{ab}G_{ac}G_{bb}G_{bd}G_{cd}\right] \\
		& -1 \e^{-t}(1 - \e^{-t}) \frac{N}{q^2N^4}  \sum_{a,b,c,d} \EX \left[G_{ab}^{2}G_{ac}G_{ad}G_{bb}G_{cd}\right] \\
		& -1/2 \e^{-t}(1 - \e^{-t}) \frac{N}{q^2N^4}  \sum_{a,b,c,d} \EX \left[G_{aa}G_{ab}^{2}G_{bb}G_{cd}^{2}\right] \\
		& -1/2 \e^{-t}(1 - \e^{-t}) \frac{N}{q^2N^4}  \sum_{a,b,c,d} \EX \left[G_{aa}G_{ab}G_{ad}G_{bb}G_{bd}G_{cc}\right] \\
		& -1/2 \e^{-t}(1 - \e^{-t}) \frac{N}{q^2N^4}  \sum_{a,b,c,d} \EX \left[G_{aa}^{2}G_{bb}G_{bc}G_{bd}G_{cd}\right] \\
		& + O(\Tau_G) + O_{\prec}(N^{-D}).
	\end{split}
	\label{first_identity}
\end{equation}  This identity is generated using expansion rule 1 with $G_{ac}$ on the first term on the right in \eqref{first_identity}. In particular all of the terms appearing in \eqref{first_identity} are of the same type; they are type-AB terms. Note that the first term in \eqref{first_identity} is the same as the sixth term in $\Tau^{\mathrm{basis}}$, since the first non-zero entry of the third column is in row six. 

Furthermore, the third entry of the solution vector $(x_{m})_{m=1}^{M}$  to the matrix equation (\ref{goal_linear_eq_sys}) is $-1824$. This means that along with 61 other identities, the following identity leads to the cancellations on the right side of \eqref{imm_diff_expansion}:
\begin{equation*}
	\begin{split}
		-1824 \cdot \Bigl( & -1 \e^{-t}(1 - \e^{-t}) \frac{N}{q^2N^3}  \sum_{a,b,c} \EX \left[G_{aa}G_{ab}G_{ac}G_{bb}G_{bc}\right] \\
		&-5/2 \e^{-t}(1 - \e^{-t}) \frac{N}{q^2N^4}  \sum_{a,b,c,d} \EX \left[G_{aa}G_{ab}G_{ac}G_{bb}G_{bd}G_{cd}\right] \\
		& -1 \e^{-t}(1 - \e^{-t}) \frac{N}{q^2N^4}  \sum_{a,b,c,d} \EX \left[G_{ab}^{2}G_{ac}G_{ad}G_{bb}G_{cd}\right] \\
		& -1/2 \e^{-t}(1 - \e^{-t}) \frac{N}{q^2N^4}  \sum_{a,b,c,d} \EX \left[G_{aa}G_{ab}^{2}G_{bb}G_{cd}^{2}\right] \\
		& -1/2 \e^{-t}(1 - \e^{-t}) \frac{N}{q^2N^4}  \sum_{a,b,c,d} \EX \left[G_{aa}G_{ab}G_{ad}G_{bb}G_{bd}G_{cc}\right] \\
		& -1/2 \e^{-t}(1 - \e^{-t}) \frac{N}{q^2N^4}  \sum_{a,b,c,d} \EX \left[G_{aa}^{2}G_{bb}G_{bc}G_{bd}G_{cd}\right] \Bigr) \\
		& =  O(\Tau_G) + O_{\prec}(N^{-D}).
	\end{split}
\end{equation*}

Another example of identity, which occurs in the 54th column of the coefficient matrix $\widetilde{C}$ is 
\begin{equation}
	\begin{split}
		0 = & -2 \e^{-t}(1 - \e^{-t}) \frac{N}{q^2N^3}  \sum_{a,b,c} \EX \left[G_{aa}G_{ab}G_{ac}G_{bc}\right] \\
		& -\e^{-t}(1 - \e^{-t}) \frac{N}{q^2N^3}  \sum_{a,b,c} \EX \left[G_{aa}G_{ab}^{2}G_{cc}\right] \\
		& -2 \e^{-t}(1 - \e^{-t}) \frac{N}{q^2N^2}  \sum_{a,b} \EX \left[G_{aa}G_{ab}^{2}\right] \\
		& -3 \e^{-t}(1 - \e^{-t}) \frac{N}{q^2N^3}  \sum_{a,b,c} \EX \left[G_{ab}^{2}G_{ac}^{2}\right] \\
		& -\e^{-t}(1 - \e^{-t}) \frac{N}{q^2N^2}  \sum_{a,b} \EX \left[G_{ab}^{2}\right] \\
		& + O(\Tau_G) + O_{\prec}(N^{-D}).
	\end{split}
\end{equation} Here $\e^{-t}(1 - \e^{-t}) \frac{N}{q^2N^2}  \sum_{a,b} \EX \left[G_{ab}^{2}\right]$ has been used as $T_{\text{start}}$ and it has been expanded using Rule 3 with the existing index $a$. In particular, $\e^{-t}(1 - \e^{-t}) \frac{N}{q^2N^2}  \sum_{a,b} \EX \left[G_{ab}^{2}\right]$ is a type-0 term, and all other terms occurring in the identity are type-A terms. The 54th entry of the solution vector to \eqref{goal_linear_eq_sys} is $-12$.

\bigskip

{\bf Acknowledgment:} This manuscript is based upon work supported by the Swedish Research Council under grant No.\ 2021-06594 while the authors were in residence at Institut Mittag-Leffler in Djursholm, Sweden during the fall semester 2024. T.B.\ and K.S.\ are supported by Grant No.\ VR-2021-04703 from the Swedish Research Council. Y.X.\ is partially supported by Grant No.\ 2024YFA1013503 from the National Key R\&D Program of China.

				\printbibliography

\end{document}